\newtheorem{thm}{Theorem}[section]
\newtheorem{theorem}[thm]{Theorem}
\newtheorem{cor}[thm]{Corollary}
\newtheorem{lem}[thm]{Lemma}
\newtheorem{lemma}[thm]{Lemma}
\newtheorem{prop}[thm]{Proposition}
\newtheorem*{conj}{Conjecture}
\newtheorem{corstar}[thm]{Corollary*}
\newtheorem{propstar}[thm]{Proposition*}
\newcommand{\Mbar}{\overline{M}}
\newcommand{\An}{\mathcal{A}_{n}}
\newcommand{\QQ}{\mathbb{Q}}
\begin{document}
\title{Gromov-Witten theory of $\An$-resolutions}
\author{Davesh Maulik}
\maketitle
\begin{abstract}
We give a complete solution for the reduced Gromov-Witten theory
of resolved surface singularities of type $A_{n}$, for any genus, with arbitrary descendent
 insertions.   We also present a partial evaluation of the $T$-equivariant relative Gromov-Witten theory of the threefold $\An\times\mathbf{P}^{1}$ which, under a nondegeneracy hypothesis,
yields a complete solution for the theory.  The results given here allow comparison of this theory with the quantum cohomology of the Hilbert scheme of points on the $\mathcal{A}_{n}$ surfaces.
We discuss generalizations to linear Hodge insertions and to surface resolutions of type $D,E$.
As a corollary, we present a new derivation of the stationary Gromov-Witten
theory of $\mathbf{P}^{1}$.
\end{abstract}
\tableofcontents
\section{Introduction}
\subsection{Overview}

Let $\zeta$ be a primitive $(n+1)$-th root of unity and consider the action of $\mathbb{Z}_{n+1}$
on $\mathbb{C}^2$ for which the generator acts via
$$(z_1,z_2) = (\zeta z_1, \zeta^{-1} z_2).$$  Let $\An$ be the minimal resolution
$$\An \rightarrow \mathbb{C}^2/\mathbb{Z}_{n+1}.$$  
The algebraic torus $T = (\mathbb{C}^{\ast})^2$
acts on $\mathbb{C}^2$ via the standard diagonal action. This commutes with the action of the cyclic group, 
so there is an induced $T$-action on the quotient singularity and its resolution $\An$.

The Gromov-Witten theory of $\An$ is defined by integrating cohomology classes against the virtual fundamental class
of the moduli space of stable maps $$\Mbar_{g}(\An, \beta).$$  Since $\An$ admits a holomorphic symplectic
form, it is a well-known fact that the virtual fundamental class vanishes and the Gromov-Witten theory is trivial. 
In the case of compact $K3$ surfaces, for example, this vanishing is a consequence of the existence of nonalgebraic deformations 
of the surface which do not contain any holomorphic curves.  In this case, one can correct for these deformations
by instead working with a reduced virtual class of dimension one larger than the usual expected dimension; for K3 and 
abelian surfaces, reduced Gromov-Witten invariants have been used to study enumerative conjectures of Yau and Zaslow.
\cite{bryan-leung}

In this paper, we completely solve the reduced Gromov-Witten theory of the $\An$ surfaces in all genus with arbitrary descendents.  In the case of $\mathcal{A}_{1}$, this solution was conjectured by R. Pandharipande
with motivation from the crepant resolution conjecture \cite{chen-ruan, bryan-graber}.  These
 surfaces have the striking property that the solution can be expressed by a closed formula.  This is in contrast to other varieties, such as a point or $\mathbf{P}^1$,
 for which a complete solution only exists via complicated recursions or differential equations.  In this sense, the $\An$ surfaces have the simplest known nontrivial Gromov-Witten theory. 

We also study the $T$-equivariant relative Gromov-Witten theory of the threefold $\An \times \mathbf{P}^1$.  We give a partial evaluation of relative invariants, corresponding to what we call divisor operators; under the assumption of a nondegeneracy conjecture in section \ref{generationconjecture}, this gives a solution for the complete relative theory.  
As a corollary, these divisor evaluations lead to closed formulas
for linear Hodge integrals in the reduced theory of the surface in terms of hypergeometric series.  
Our argument also yields a new derivation of the stationary theory of $\mathbf{P}^1$,
first studied in \cite{okpanp1}.

\subsection{Gromov-Witten theory of $\An$}

Viewed as a crepant resolution of a quotient singularity, the exceptional locus 
of $\An$ consists of a chain of $n$ rational curves $E_1, \dots, E_n$ with intersection matrix given by the Cartan matrix for the $A_n$ root lattice.  That is, 
each $E_{i}$ has self-intersection $-2$ and intersects $E_{i-1}$ and $E_{i+1}$ transversely.  These classes span $H^2(\An, \mathbb{Q})$ and, along with the identity class, span the full cohomology ring of $\An$.  For $1\leq i < j \leq n$, we define the effective curve classes
$$\alpha_{ij} = E_{i} + E_{i+1} + \dots E_{j-1}$$
corresponding to roots of the $\An$ lattice.

Given cohomology classes $\gamma_1, \dots, \gamma_m \in H^{\ast}(\An, \mathbb{Q})$, we are interested in descendent invariants in
the reduced Gromov-Witten theory
\begin{align*}
\langle \prod_{k=1}^{m} \tau_{a_{k}}(&\gamma_{k})\rangle^{\An,\mathrm{red}}_{g,\beta}
= \int_{[\Mbar_{g,m}(\An,\beta)]^{\mathrm{red}}} \prod_{k=1}^{m} \psi_{k}^{a_{k}}\mathrm{ev}^{\ast}(\gamma_k)
\end{align*}
where
$\psi_k \in H^{2}(\Mbar_{g,m}(\An,\beta),\mathbb{Q})$ is the first Chern class of the cotangent line bundle $L_k$ on the moduli space of maps associated to the $k$-th marked point.
Although $\An$ is noncompact, the moduli space
$\Mbar_{g,m}(\An, \beta)$ is compact for nonzero $\beta \in H_{2}(\An,\mathbb{Z})$.  The notation $[\Mbar_{g,m}(\An,\beta)]^{\mathrm{red}}$ 
refers to the \textit{reduced} virtual fundamental class on the moduli space, which has dimension
$$g+m.$$ 

Fix a curve class $\alpha= \alpha_{ij}$ and consider integers $a_1, \dots, a_r > 0$,
 $b_1, \dots, b_s \geq 0$, and divisor classes $\omega_1, \dots \omega_s \in H^2(\An, \mathbb{Q})$
 which satisfy the dimension constraint
$$\sum a_i + \sum b_j = g+r.$$ 
We prove the following evaluation:
\begin{theorem}\label{mainthm}
 For curve classes of the form $\beta = d \alpha$, we  have
 \begin{align}
 \langle \prod_{k=1}^{r} \tau_{a_{k}}(1) &\prod_{l=1}^{s} \tau_{b_{l}}(\omega_{l}) 
 \rangle^{\An, \mathrm{red}}_{g, d \alpha} =\nonumber\\ 
& \frac{(2g+r+s-3)!}{(2g+s-3)!}
 d^{2g+s-3} 
 \prod_{k=1}^r \frac{(a_k-1)!}{(2a_k-1)!}\left(-\frac{1}{2}
\right)^{a_k-1}\label{mainform}\\
&\cdot\prod_{l=1}^s \frac{b_l!}{(2b_l+1)!}\left(-\frac{1}{2}
\right)^{b_l} (\alpha \cdot \omega_l).\nonumber 
\end{align}
If $\beta$ is not a multiple of $\alpha$ for any root $\alpha$, then all reduced invariants vanish.
\end{theorem}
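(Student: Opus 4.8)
The plan is to reduce the general $\An$ statement to the local geometry of a single $(-2)$-curve and then evaluate the resulting multiple-cover series. I would begin with the final vanishing assertion, since it also supplies the needed reduction. Because $\An$ carries a holomorphic symplectic form, its reduced invariants are invariant under deformation within the hyperkähler family of complex structures. Write $\beta = d\beta_0$ with $\beta_0$ primitive and effective. Since the intersection form on the exceptional classes is negative definite (minus the $A_n$ Cartan matrix), we have $\beta_0^2 \le -2$, with equality exactly for the roots $\alpha_{ij}$. I would deform the complex structure to a generic member of the family whose Picard lattice is $\mathbb{Z}\beta_0$. Using the surface relation $C^2 = 2p_a(C)-2$ (valid since the canonical class is trivial), an irreducible curve in any class $k\beta_0$ would require $k^2\beta_0^2 \ge -2$; if $\beta_0^2 \le -4$ this is impossible for every $k\ge 1$, so no stable maps exist, the moduli space is empty, and all reduced invariants vanish. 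If instead $\beta_0^2 = -2$, i.e. $\beta_0 = \alpha_{ij}$ is a root, the generic member carries a unique smooth irreducible rational $(-2)$-curve $C$ in that class. This proves the last sentence and reduces the main formula to the case $\beta = d\alpha$ represented by such a $C$.

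Next I would localize to the neighborhood of $C$. A neighborhood of $C$ is the total space of $\mathcal{O}_{\mathbf{P}^1}(-2)$, namely the $\mathcal{A}_1$ surface, and since $C$ is rigid in its class every contributing stable map factors through this neighborhood; hence the invariant equals the corresponding $\mathcal{A}_1$ invariant. The divisor insertions then enter only through the restricted degrees $C\cdot\omega_l = \alpha\cdot\omega_l$, which I would extract by the usual divisor/restriction argument; this accounts for the linear factor $(\alpha\cdot\omega_l)$ in (\ref{mainform}) and reduces every insertion to a descendent of $1$ or of the point class on $C$.

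The crux is to evaluate the reduced descendent series of the local $(-2)$-curve in class $d[\mathbf{P}^1]$. Here I would pass to the associated $T$-equivariant threefold ($\An\times\mathbf{P}^1$, or the total space of $\mathcal{O}(-2)$): the extra dimension of the reduced class over the ordinary virtual class is accounted for by one equivariant direction, so the reduced surface invariants are recovered as a distinguished coefficient of $T$-equivariant integrals over $\Mbar_{g,m}(\mathbf{P}^1,d)$. These equivariant integrals are stationary descendent invariants of $\mathbf{P}^1$ weighted by the Hodge/obstruction class of the normal direction, and I would compute them through the operator formalism on Fock space, equivalently the GW/Hurwitz completed-cycle correspondence. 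In this language the factorization over the $a_k$ and $b_l$, the combinatorial prefactor $(2g+r+s-3)!/(2g+s-3)!$, and the power $d^{2g+s-3}$ all arise from the vacuum expectation of a product of vertex operators, while the closed coefficients $\frac{(a-1)!}{(2a-1)!}\left(-\tfrac{1}{2}\right)^{a-1}$ and $\frac{b!}{(2b+1)!}\left(-\tfrac{1}{2}\right)^{b}$ appear as the hypergeometric evaluations of the corresponding one-point Hodge integrals.

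I expect this last step to be the main obstacle. The reduction to a single irreducible $(-2)$-curve and the extraction of the $\alpha\cdot\omega_l$ dependence are essentially formal, but producing the exact closed form — the hypergeometric one-point coefficients and the precise factorized shape of the multi-point series — requires the full strength of the stationary $\mathbf{P}^1$ calculus together with a careful identification of the reduced obstruction theory with the relevant Hodge classes. Controlling the equivariant weights and verifying that the leading reduced coefficient reproduces exactly these hypergeometric series is where the genuine work will lie; it is also the point at which the promised new derivation of the stationary theory of $\mathbf{P}^1$ should fall out.
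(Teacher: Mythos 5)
Your reduction steps are sound, and they take a genuinely different route from the paper: where the paper proves the vanishing for non-root classes, the root-independence, and the reduction to $\mathcal{A}_{1}$ by virtual localization (tracking the multiplicity of $(t_1+t_2)$ edge factors in the fixed-point graphs), you deform the complex structure so that only multiples of a single root remain algebraic and apply adjunction. That is legitimate — it is exactly the mechanism the paper itself uses in section \ref{dandesection} for $D,E$ resolutions — but note it silently requires the comparison $[\Mbar_{g}(S,\beta)]^{\mathrm{red}} = [\Mbar_{g}(X/\Delta,\beta)]^{\mathrm{vir}}$ of \cite{k3paper} to justify deformation invariance of \emph{reduced} invariants on a noncompact surface; also, the divisor factors $(\alpha\cdot\omega_l)$ come from restricting $\omega_l$ to the neighborhood of the $(-2)$-curve, not from the divisor equation, since $b_l>0$.

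The genuine gap is in your final step. The operator formalism / GW--Hurwitz completed-cycle correspondence of \cite{okpanp1} covers only the \emph{stationary} sector of $\mathbf{P}^1$, i.e.\ descendents $\tau_{k}(\omega)$; it says nothing about the insertions $\tau_{a_k}(1)$, and the combinatorial prefactor $(2g+r+s-3)!/(2g+s-3)!$ — the only place $r$ enters formula (\ref{mainform}) — is precisely the content that no existing vertex-operator calculus produces. The paper is explicit that Virasoro constraints are not even conjecturally available for reduced or equivariant theories, and its solution is a separate, rather elaborate argument: degenerate the level $-2$ geometry into two level $-1$ relative pieces, reduce level $-1$ and level $0$ relative theories to absolute ones, realize those as ordinary Gromov--Witten theories of the toric Fano surfaces $\operatorname{Bl}_p\mathbf{P}^2$ and $\mathbf{P}^1\times\mathbf{P}^1$, and invoke Givental's proof of the Virasoro conjecture \cite{givental} to obtain a removal rule for $\tau_{a+1}(1)$ that uniquely determines the answer from the stationary case. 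Nothing in your proposal substitutes for this. Moreover, even in the stationary sector your identification is off: by the paper's Corollary \ref{reducedcor2}, the reduced invariants are integrals against $c_{g+2d-2}(R\pi_{\ast}\mathrm{ev}^{\ast}\mathcal{O}(-2))\cap[\Mbar_{g,m}(\mathbf{P}^1,d)]$, which are \emph{not} the stationary invariants of $\mathbf{P}^1$ computed by completed cycles; the paper evaluates them by a comb degeneration plus the Faber--Pandharipande Hodge series \cite{faber-pandharipande}, and only later (section \ref{rubberan}) recovers the stationary theory of $\mathbf{P}^1$ as a corollary — the opposite logical direction from the one you propose, so using the stationary theory as input here is not circular, but using it to conjure the $\tau(1)$ insertions is unsupported.
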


There are several nice qualitative features of this formula.  First, the answer is essentially independent
of which $\An$ surface we consider.  Second, while a priori the number of possible curve classes grows with $n$, 
we only need to look at multiples of roots and in fact 
the answer is essentially independent of the choice of root. 
Moreover, the degree dependence is monomial and the contributions of each insertion nearly factor completely.  Our strategy will be to prove these statements first and reduce the precise evaluation to the simplest possible case.  Using an argument due to Jim Bryan, the formula above can be extended
to resolutions associated to type $D$ and $E$ root lattices.

\subsection{Gromov-Witten theory of $\An\times\mathbf{P}^1$}

Consider the projective line $\mathbf{P}^1$ with $k$ distinct marked points $z_1, \dots, z_k$.  
Given a curve class $\beta \in H_2(\An, \mathbb{Q})$, an integer $m\geq 0$, and $k$ partitions
$$\mu_1,\dots, \mu_k$$
of $m$, the moduli space
$$\Mbar^{\bullet}_{g}(\An \times \mathbf{P}^1, (\beta,m); \mu_1, \dots, \mu_k)$$
consists of (possibly disconnected) relative stable maps from genus $g$ curves to the threefold $\An \times \mathbf{P}^1$, with target homology class given by $(\beta, m) \in H_{2}(\An\times\mathbf{P}^1,\mathbb{Z})$ and
 with ramification profile given by the
partition $\mu_i$ over each divisor $\An\times z_i$.   We assume that the ramification points over each relative divisor are marked and ordered; the $\bullet$ here follows the notation from \cite{localcurves} and indicates that we do not allow collapsed connected components in the domain.  Unlike the previous section, where we only considered reduced theory of the surface, we are now interested in the full $T$-equivariant theory of the threefold.  This space possesses a virtual fundamental class of dimension
$$ -K_{\An}\cdot\beta + 2m +\sum_i(l(\mu_i) - m) = \sum_i l(\mu_i) + (2-k)m.$$

Given a nonnegative integer $m$, a \textit{cohomology-weighted} partition of $m$ consists of an unordered set of pairs
$$\overrightarrow{\mu} = \{(\mu^{(1)}, \gamma_1), \dots, (\mu^{(l)}, \gamma_l)\}$$
where $\{\mu^{(1)},\dots,\mu^{(l)}\}$ is a partition whose parts are labelled by cohomology classes $\gamma_i \in H^{\ast}(\An, \mathbb{Q})$.

Suppose that we have $k$ weighted partitions of $m$:
$$\overrightarrow{\mu_{1}},\dots,\overrightarrow{\mu_{k}}.$$
For each part $\mu_{r}^{(s)}$ of the partition $\mu_r$, there is an associated cohomology class
$\gamma_{r}^{(s)}$ on $\An$ as well as an evaluation map
$$\Mbar^{\bullet}_{g}(\An \times \mathbf{P}^1, (\beta,m);\mu_1,\dots \mu_k) \longrightarrow \An \times z_r = \An$$
associated to the corresponding ramification point.
We define relative invariants by pulling back each cohomology class by its associated evaluation map:
\begin{align*}
\langle \overrightarrow{\mu_1}, \dots,&\overrightarrow{\mu_k}\rangle^{\An\times\mathbf{P}^1}_{g,\beta}
= \frac{1}{\prod |\mathrm{Aut}(\mu_r)|}
\int_{[\Mbar_g(\An\times\mathbf{P}^1)]^{\mathrm{vir}}}\prod_{r=1}^{k}\prod_{s=1}^{l(\mu_r)} \mathrm{ev}^{\ast}\gamma_{r}^{(s)}.
\end{align*}
The automorphism prefactor corrects for the fact that our relative conditions are \textit{unordered} partitions while ordered partitions are required to define the moduli space and evaluation maps.
In the case where $\beta = 0$, the space of relative stable maps  is not compact in which case this integral must be defined as a localization residue with respect to the $T$-action, as explained in
\cite{localcurves}.  

We can encode these relative invariants in a partition function
\begin{align*}
\mathsf{Z}^{\prime}(\An\times\mathbf{P}^1)_{\overrightarrow{\mu_1},\dots, \overrightarrow{\mu_k}} &= \sum_{g,\beta} 
\langle \overrightarrow{\mu_1},\dots,\overrightarrow{\mu_k}\rangle^{\An\times\mathbf{P}^{1}}_{g,\beta} u^{2g-2} s^{\beta}
\in \mathbb{Q}(t_1,t_2)((u))[[s_1,\dots,s_n]]
\end{align*}
where $s^{\beta} = \prod_{i=1}^{n} s_i^{\beta\cdot \omega_{i}}$
and $\{\omega_{1},\dots,\omega_{n}\}$ is the dual basis to
$\{E_{i}\}$ in $H^{2}(\An,\mathbb{Q})$ under the Poincare pairing.
  Again, the notation here for 
follows that of \cite{localcurves}.

Using the results of the previous section, we calculate
$$\mathsf{Z}^{\prime}(\An\times\mathbf{P}^{1})_{\overrightarrow{\mu},\overrightarrow{\rho},\overrightarrow{\nu}}$$
for 
$$\overrightarrow{\rho} = \{(1,1)^{m}\}, \{(2,1)(1,1)^{m-2}\}\quad\mathrm{or}\quad\{(1,\omega_{i})(1,1)^{m-1}\}.$$
These relative conditions correspond to unit and divisor operators for the Hilbert scheme of points on $\An$, under the Gromov-Witten/Hilbert correspondence discussed in section \ref{relan}.
Assuming a nondegeneracy conjecture for these operators, we explain how to determine
the full partition function above in terms of these evaluations and gluing relations from
the degeneration formula, in a manner analogous to the local curve theory of Bryan
and Pandharipande \cite{localcurves}.  One can extend these results further to twisted $\An$-bundles
over a genus $g$ curve.  This theory can be viewed as a deformation of the enriched TQFT structure
described in that paper.

\subsection{Relation to other theories}

As we explain in section \ref{ringstructure}, using a construction that is valid for any surface, the relative Gromov-Witten theory of $\An\times\mathbf{P}^{1}$ 
induces a ring structure on
$$H_{T}^{\ast}(\mathrm{Hilb}_{m}(\An),\QQ)\otimes \QQ(t_1,t_2)((u))[[s_{1},\dots,s_{n}]]$$
that is a deformation of the classical cohomology ring of the Hilbert scheme of points on $\An$.
 Our work here is the starting point of a series of comparisons of this ring to related theories.  In related work with A. Oblomkov(\cite{hilban},\cite{dtan}), we will prove a triangle of equivalences between the Gromov-Witten theory of $\An\times \mathbf{P}^1$,
the Donaldson-Thomas theory of $\An\times \mathbf{P}^1$, and the quantum
cohomology of the Hilbert scheme of points on the $\An$ surface, each of which
provides a ring deformation of the classical cohomology of the Hilbert scheme. We will
explain the Gromov-Witten/Hilbert correspondence for $\An$ in detail in section \ref{relan}.

\begin{figure}[hbtp]\psset{unit=0.5 cm}                                                                         
  \begin{center}                                                                                                
    \begin{pspicture}(-6,-2)(10,6)
    \psline(0,0)(2,3.464)(4,0)(0,0)                                                                             
    \rput[rt](0,0){                                                                                             
        \begin{minipage}[t]{3.64 cm}                                                                            
          \begin{center}                                                                                        
            Gromov-Witten \\ theory of $\An \times \mathbf{P}^1$                                                  
          \end{center}                                                                                          
        \end{minipage}}                                                                                         
    \rput[lt](4,0){                                                                                             
        \begin{minipage}[t]{3.64 cm}                                                                            
          \begin{center}                                                                                        
             Donaldson-Thomas\\ theory of $\An \times \mathbf{P}^1$                                               
          \end{center}                                                                                          
        \end{minipage}}                                                                                         
    \rput[cb](2,4.7){                                                                                           
        \begin{minipage}[t]{4 cm}                                                                               
          \begin{center}                                                                                        
           Quantum cohomology \\ of $Hilb(\An)$                                                                 
          \end{center}                                                                                          
        \end{minipage}}                                                                                         
    \end{pspicture}                                                                                             
  \end{center}                                                                                                  
\end{figure}      

The above triangle was first shown to hold for $\mathbb{C}^2$ in \cite{localcurves},\cite{okpanhilb},\cite{okpandt}.  While the GW and DT vertices are always conjectured to be 
equivalent for arbitrary threefolds, the relationship with the quantum cohomology of the Hilbert scheme breaks down for general surfaces in the specific form we describe here.  Our work for $\An$ surfaces
provide the only other examples for which this triangle is known to hold.  

These equivalences play an essential role in proving the primary Gromov-Witten/Donaldson-Thomas 
correspondence for all toric varieties \cite{MOOP}.  The argument there 
provides an effective algorithm that computes primary Gromov-Witten invariants for arbitrary toric threefolds 
starting with the precise calculations of $\An\times\mathbf{P}^{1}$ of this paper as input.  In particular, the results here lead to expressions
for the two-leg and three-leg equivariant vertices.

In addition to these equivalences, 
the $\An$ higher genus evaluation we give here should be identical to
the higher genus orbifold theory of the Deligne-Mumford stack $[\mathbb{C}^2/\mathbb{Z}_{n+1}]$.
More precisely, our formulas when applied to the crepant resolution conjecture \cite{bryan-graber} yield
a conjectural evaluation of certain Hurwitz-Hodge integrals on the moduli space of $n+1$-fold covers
of genus $g$ curves.  We plan to investigate these evaluations in future work.

\subsection{Outline}
In section \ref{proofofmainthm}, after explaining preliminary features of the reduced theory, we prove the main theorem of the evaluation for $\An$.  In section \ref{rubberan}, we explain the evaluation of the $T$-equivariant theory of a nonrigid $\An\times \mathbf{P}^1$.  As a corollary of this argument, we present a new derivation of the stationary theory of $\mathbf{P}^1$ in terms of certain double Hurwitz numbers.  In section \ref{relan}, we
explain how to use these basic integrals to calculate the divisor operators discussed above and the generation conjecture that allows us to reconstruct the full relative theory of the threefold.  We also discuss the relationship with the quantum cohomology of $\mathrm{Hilb}(\An)$.  Finally, in section \ref{linearhodge}, we use these basic integrals to study linear Hodge series in the reduced theory of the surface, where we again obtain essentially closed expressions.

\subsection{Acknowledgements}

We wish to thank A. Oblomkov, A. Okounkov, and R. Pandharipande for many useful discussions and comments. We also thank J. Bryan for suggesting the proof in section \ref{dandesection}.
The author was partially supported by an NSF Graduate Fellowship and a Clay Research Fellowship.

\section{Proof of Theorem \ref{mainthm}}\label{proofofmainthm}

We prove theorem \ref{mainthm} in several steps.  We first study the reduced virtual class and explain
how it is equivalent to the linear part of the full $T$-equivariant theory of $\An$.  We then prove the degree scaling and root-independence properties of the evaluation by a localization argument.  This reduces the problem to the case of $\mathcal{A}_1$, where we finish the proof using exact calculations and a set of Virasoro relations for the reduced theory.  We close with an argument, suggested us
by Jim Bryan, reducing the case of surface resolutions of type $D$ and $E$ to the invariants calculated here.

\subsection{Notation}
Let us fix notation for our surfaces.  Recall that the exceptional locus of $\An$ is given
by a chain $E_1,\dots,E_n$  of rational $(-2)$-curves.  Under the $T$-action, there are $n+1$ fixed points $p_1, \dots, p_{n+1}$; the tangent weights 
at the fixed point $p_i$ are $(n+2-i)t_1 - (i-1)t_2$ and $(i-n-1)t_1 + it_2$.  
The $E_i$ are the $T$-fixed curves
joining $p_{i}$ to $p_{i+1}$.  We denote by $E_{0}$ and $E_{n+1}$ for the noncompact $T$-fixed
curve direction at $p_{1}$ and $p_{n+1}$ respectively.  On the $\mathcal{A}_{1}$ surface, we denote the exceptional curve by $E = E_1$ and its
Poincare dual by $\omega = \frac{-1}{2}[E]$.  
\begin{figure}
\begin{center}
\begin{picture}(300,130)(0,0) \put(150,30){\line(-3,1){60}}
\put(90,50){\line(-1,1){50}} \put(64,52){$3t_1$}
\put(90,50){\vector(-1,1){20}} \put(90,50){\vector(3,-1){25}}
\put(150,30){\vector(-3,1){25}} \put(72,36){$2t_2-t_1$}
\put(105,25){$t_1-2t_2$} \put(90,50){\circle{2}}
\put(90,50){\circle{1}} \put(90,55){$p_1$}
\put(150,30){\circle{1}}\put(150,30){\circle{2}}
\put(150,35){$p_2$}
\put(150,30){\line(3,1){60}}\put(150,30){\vector(3,1){25}}
\put(210,50){\vector(-3,-1){25}} \put(210,50){\line(1,1){50}}
\put(210,50){\vector(1,1){20}}\put(210,50){\circle{1}}
\put(210,50){\circle{2}} \put(202,55){$p_3$} \put(223,52){$3t_2$}
\put(163,25){$2t_1-t_2$}\put(198,36){$t_2-2t_1$}
\end{picture}
\caption{Tangent weights for $\mathcal{A}_{2}$}
\end{center}
\end{figure}
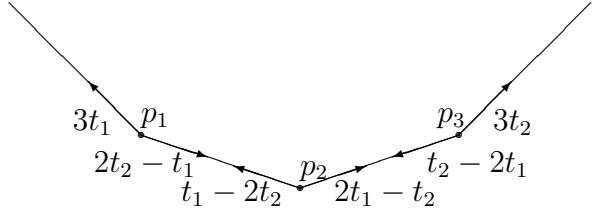
\subsection{Reduced classes}
In this section, we define the reduced virtual fundamental class for
$\Mbar_{g,m}(\An,\beta)$.  We also explain a comparison statement between the reduced 
Gromov-Witten theory and the linear part of the 
$T$-equivariant Gromov-Witten theory, as defined in the usual sense.  The 
algebraic construction given here is due originally to Behrend-Fantechi \cite{bfreduced}.  
Our discussion closely follows the more detailed treatement given in \cite{okpanhilb}.  

Heuristically, given any variety with a nonvanishing holomorphic symplectic form, this form gives rise to a trivial factor of the obstruction theory which leads to the vanishing of the usual nonequivariant virtual fundamental class.  By removing this trivial factor by hand, we obtain a nontrivial theory with virtual dimension increased by $1$.  In the context of compact K3 surfaces, a symplectic construction can also be given in terms of family Gromov-Witten invariants of the associated twistor fibration \cite{bryan-leung}.

We first explain the standard and modified obstruction theory for a fixed domain curve $C$.  Given a fixed nodal, pointed curve $C$ of genus $g$, let
$M_{C}(\An, \beta)$
denote the moduli space of maps from $C$ to $\An$ of degree $\beta \neq 0$.  The usual perfect obstruction theory for $M_C(\An,\beta)$ is defined by the natural morphism
\begin{equation}\label{standardmap}
R\pi_{\ast}(\mathrm{ev}^{\ast}T_{\An})^{\vee} \rightarrow L_{M_{C}},
\end{equation}
where $L_{M_{C}}$ denotes the cotangent complex of $M_{C}(\An,\beta)$ and
\begin{gather*}
\mathrm{ev}: C \times M_C(\An,\beta) \rightarrow \An,\\
\pi: C \times M_C(\An, \beta) \rightarrow M_C(\An,\beta).
\end{gather*}
are the evaluation and projection maps.

Let $\gamma$ denote the holomorphic symplectic form on $\An$ induced by the standard form
$dx\wedge dy$ on $\mathbb{C}^2$.  The $T$-representation $\mathbb{C}\cdot \gamma$
has weight $-(t_1+t_2)$. Let $\Omega_{\pi}$ and $\omega_{\pi}$ denote the sheaf of relative differentials and the relative dualizing sheaf.  The canonical map
$$\mathrm{ev}^{\ast}(\Omega_{\An}) \rightarrow \Omega_{\pi} \rightarrow \omega_{\pi}$$
and the symplectic pairing
$$T_{\An} \rightarrow \Omega_{\An} \otimes (\mathbb{C}\gamma)^{\vee}.$$
induce a map of bundles
$$\mathrm{ev}^{\ast}(T_{\An})\rightarrow \omega_{\pi}\otimes(\mathbb{C}\gamma)^{\vee},$$
This, in turn, yields a map
of complexes
$$R\pi_{\ast}(\omega_{\pi})^{\vee}\otimes\mathbb{C}\gamma \rightarrow R\pi_{\ast}(\mathrm{ev}^{\ast}(T_{\An})^{\vee})$$
and the truncation
$$\iota:\tau_{\leq -1}R\pi_{\ast}(\omega_{\pi})^{\vee}\otimes\mathbb{C}\gamma \rightarrow
R\pi_{\ast}(\mathrm{ev}^{\ast}(T_{\An})^{\vee}).$$
This truncation is a trivial line bundle with equivariant weight $-(t_1+t_2)$.

Results of Ran and Manetti (\cite{ran,manetti}) on obstruction theory and the semiregularity map imply the following.  First, there is an induced map
\begin{equation}\label{reducedmap}
C(\iota) \rightarrow L_{M_{C}}
\end{equation}
where $C(\iota)$ is the mapping cone associated to $\iota$.  Second, this map
(\ref{reducedmap}) 
satisfies the necessary properties of a perfect obstruction theory.  This is 
precisely the modified obstruction theory we use to define the
reduced virtual class.   Since all maps in this section are compatible with
the $T$-action, we have a $T$-equivariant reduced virtual class.

There is one important subtlety regarding the semiregularity results of (\cite{ran,manetti}).  
In order to apply their results, we require a compact target space.  We can embed
the $\An$ singularity in a surface with a holomorphic symplectic form that is
degenerate away from the singularity.  In the resolution, our curve maps entirely to the
nondegenerate locus, so theorem 9.1 of \cite{manetti} still gives the necessary vanishing statement for
realized obstructions.

As with the standard obstruction theory (\ref{standardmap}), we obtain
the \textit{reduced}  $T$-equivariant perfect obstruction theory on $\Mbar_{g,m}(\An,\beta)$ by varying the domain $C$,
and studying the the relative obstruction theory over the Artin stack $\mathfrak{M}$ of all nodal curves.  Since
the new obstruction theory differs from the standard one by 
the 1-dimensional obstruction space $(\mathbb{C}\gamma)^{\vee}$, we have
that the reduced virtual dimension is
$$1+ (g-1)+m.$$
Furthermore we have the identity
\begin{align*}
[\Mbar_{g,m}(\An,\beta)]^{vir}_{\mathrm{standard}} 
&= c_{1}(\mathbb{C}\gamma^{\vee})[\Mbar_{g,m}(\An,\beta)]^{\mathrm{red}}\\
&=(t_1+t_2)[\Mbar_{g,m}(\An,\beta)]^{\mathrm{red}}
\end{align*}

We have proven the following
\begin{lemma}\label{reducedlemma}
The standard $T$-equivariant Gromov-Witten invariants of $\An$ with nonzero degree are divisible by $(t_1+t_2)$.  Nonequivariant reduced Gromov-Witten invariants are encoded in the coefficient of $(t_1+t_2)$ in the full $T$-equivariant standard theory.
\end{lemma}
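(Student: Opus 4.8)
The plan is to derive both assertions directly from the class identity $[\Mbar_{g,m}(\An,\beta)]^{vir}_{\mathrm{standard}} = (t_1+t_2)[\Mbar_{g,m}(\An,\beta)]^{\mathrm{red}}$ recorded above, so that the only genuine work is to pin the identity down equivariantly and then carry out the degree bookkeeping. First I would make the identity precise at the level of obstruction theories: the cone construction exhibits the standard and reduced perfect obstruction theories as differing by the trivial factor $\mathbb{C}\gamma$, which is removed in forming $C(\iota)$ and which carries equivariant weight $-(t_1+t_2)$. By the standard comparison of virtual classes for two obstruction theories differing by a trivial bundle summand, the standard class is obtained from the reduced one by capping with the Euler class of the dual factor $\mathbb{C}\gamma^{\vee}$, and $c_1(\mathbb{C}\gamma^{\vee}) = t_1+t_2$. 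Since every map in the construction is compatible with the $T$-action, this holds as an identity of $T$-equivariant classes, which is precisely the form both statements require.

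For the first assertion I would invoke properness. For $\beta\neq 0$ the moduli space $\Mbar_{g,m}(\An,\beta)$ is compact, so the equivariant pushforward of any equivariant integrand to a point lands in $H^{\ast}_{T}(\mathrm{pt})=\QQ[t_1,t_2]$ rather than in its fraction field. A standard equivariant invariant is by definition an integral against $[\cdot]^{vir}_{\mathrm{standard}}$; substituting the identity rewrites it as $(t_1+t_2)$ times an equivariant integral against $[\cdot]^{\mathrm{red}}$, which is again a polynomial in $t_1,t_2$. Hence every standard invariant is divisible by $(t_1+t_2)$ in $\QQ[t_1,t_2]$.

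For the second assertion I would count degrees. The reduced class has dimension $g+m$, while a nonequivariant reduced invariant integrates the class $\prod_k\psi_k^{a_k}\,\mathrm{ev}^{\ast}(\gamma_k)$, which is homogeneous of real degree $2(g+m)$ exactly when the dimension constraint is met. Equivariant integration against $[\cdot]^{\mathrm{red}}$ lowers degree by $2(g+m)$, so the output lies in $H^{0}_{T}(\mathrm{pt})=\QQ$; as the integrand is pulled back from nonequivariant classes and is of this complementary degree, the integral is already a pure constant, equal to the nonequivariant reduced invariant obtained by restricting $[\cdot]^{\mathrm{red}}$ to $t_1=t_2=0$. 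Combined with the class identity, the standard equivariant invariant equals $(t_1+t_2)$ times this constant, so the coefficient of $(t_1+t_2)$ in the standard $T$-equivariant theory reads off the nonequivariant reduced invariant.

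\textbf{Main obstacle.} Given the class identity, the two statements are essentially formal: the first is properness together with factoring, the second is pure dimension counting in equivariant cohomology. The substantive difficulty is upstream, in justifying the identity itself, namely that the truncated semiregularity map $\iota$ produces a genuinely \emph{trivial} line bundle of weight exactly $t_1+t_2$ and that $C(\iota)$ is a bona fide perfect obstruction theory, which is where the Ran--Manetti input and the noncompact-target workaround (embedding $\An$ in a compact surface with a symplectic form degenerate off the singularity) are needed. A secondary point I would check carefully is that the reduced class is honestly $T$-equivariant, so that both the Euler-class computation and the degree argument are legitimate equivariantly.
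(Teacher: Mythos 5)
Your proposal is correct and follows essentially the same route as the paper: the lemma is deduced directly from the identity $[\Mbar_{g,m}(\An,\beta)]^{vir}_{\mathrm{standard}} = (t_1+t_2)[\Mbar_{g,m}(\An,\beta)]^{\mathrm{red}}$, which the paper establishes via the Ran--Manetti semiregularity construction and then treats as already proving the lemma. You simply make explicit the compactness and degree-counting bookkeeping that the paper leaves implicit, and you correctly locate the genuine mathematical content upstream in the construction of the reduced obstruction theory.
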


Finally, we close with a further comparison lemma in the case of $\mathcal{A}_{1}$.  In this case, the
surface is the cotangent bundle to $\mathbf{P}^1$.  For $d>0$, we
have an identification of moduli spaces
$$\Mbar_{g,m}(\mathcal{A}_{1}, d[E]) = \Mbar_{g,m}(\mathbf{P}^1,d).$$  
We can express the reduced virtual class of the left-hand side in terms of the virtual class of the
right-hand side with the following corollary.

\begin{cor}\label{reducedcor2}
$$[\Mbar_{g,m}(\mathcal{A}_{1},d[E])]^{\mathrm{red}} = 
c_{g+2d-2}(R\pi_{\ast}\mathrm{ev}^{\ast}\mathcal{O}(-2))[\Mbar_{g,m}(\mathbf{P}^1,d)].$$
\end{cor}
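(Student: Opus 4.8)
The plan is to compare the two obstruction theories across the identification $\Mbar_{g,m}(\mathcal{A}_1,d[E]) = \Mbar_{g,m}(\mathbf{P}^1,d)$, which holds because any stable map in class $d[E]$ has image contained in the compact exceptional curve and so factors through the zero section $E=\mathbf{P}^1$. Writing $\mathcal{A}_1=\mathrm{Tot}(\mathcal{O}_{\mathbf{P}^1}(-2))$, the restriction of the tangent bundle splits as $T_{\mathcal{A}_1}|_E = T_{\mathbf{P}^1}\oplus N$ with $N=N_{E/\mathcal{A}_1}=\mathcal{O}(-2)$. Hence, for the universal map $\mathrm{ev}\colon\mathcal{C}\to\mathbf{P}^1$ over $\Mbar_{g,m}(\mathbf{P}^1,d)$ with projection $\pi$, the standard obstruction complex $R\pi_*\mathrm{ev}^*T_{\mathcal{A}_1}$ decomposes as $R\pi_*\mathrm{ev}^*T_{\mathbf{P}^1}$, which is exactly the obstruction theory of $\Mbar_{g,m}(\mathbf{P}^1,d)$, plus the normal contribution $R\pi_*\mathrm{ev}^*N$.

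First I would pin down the equivariant structure of the normal direction. The symplectic isomorphism $T_{\mathcal{A}_1}\xrightarrow{\sim}\Omega_{\mathcal{A}_1}\otimes(\mathbb{C}\gamma)^{\vee}$ restricts on $E$ to $N\xrightarrow{\sim}\Omega_{\mathbf{P}^1}\otimes(\mathbb{C}\gamma)^{\vee}$, so that $T$-equivariantly $N=\mathcal{O}(-2)\otimes\mathbb{C}_{t_1+t_2}$, where $\mathcal{O}(-2)=\Omega_{\mathbf{P}^1}$ carries its natural linearization and an extra character of weight $t_1+t_2$; this is cross-checked against the fixed-point data of the Notation subsection, where the fibre of $N$ has weights $2t_1$ and $2t_2$ at $p_1,p_2$. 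Next I would note that for $d>0$ and connected domains $R^0\pi_*\mathrm{ev}^*\mathcal{O}(-2)=0$, since a section vanishes on every noncontracted component by negativity of the degree and hence vanishes identically by connectedness. Thus $\mathcal{U}:=R^1\pi_*\mathrm{ev}^*\mathcal{O}(-2)$ is a genuine bundle of rank $g+2d-1$, the normal part is a pure obstruction bundle in degree one, and the standard comparison of obstruction theories for the total space of a line bundle gives
$$[\Mbar_{g,m}(\mathcal{A}_1,d[E])]^{\mathrm{vir},T}_{\mathrm{standard}} = e_T(\mathcal{U}\otimes\mathbb{C}_{t_1+t_2})\cap[\Mbar_{g,m}(\mathbf{P}^1,d)]^{\mathrm{vir}}.$$

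The final step extracts the reduced class via Lemma \ref{reducedlemma}, which identifies it with the coefficient of $(t_1+t_2)$ in the standard $T$-equivariant class. The clean point is that the diagonal subtorus $\Delta\subset T$ acts trivially on $E=\mathbf{P}^1$, hence trivially on $\mathcal{O}(-2)=\Omega_E$ and on $\mathcal{U}$, so the equivariant Chern classes of $\mathcal{U}$ are independent of $\lambda:=t_1+t_2$. Expanding
$$e_T(\mathcal{U}\otimes\mathbb{C}_{\lambda}) = \sum_{k\geq 0}\lambda^{k}\,c_{(g+2d-1)-k}(\mathcal{U}),$$
the $\lambda^0$ term $c_{g+2d-1}(\mathcal{U})\cap[\Mbar_{g,m}(\mathbf{P}^1,d)]^{\mathrm{vir}}$ is the nonequivariant standard class of $\mathcal{A}_1$, which vanishes by the symplectic form, confirming the divisibility of Lemma \ref{reducedlemma}; the coefficient of $\lambda^1$ is $c_{g+2d-2}(\mathcal{U})\cap[\Mbar_{g,m}(\mathbf{P}^1,d)]^{\mathrm{vir}}$. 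Since $R^0=0$ identifies $R\pi_*\mathrm{ev}^*\mathcal{O}(-2)$ with the obstruction bundle $\mathcal{U}$, this is exactly the asserted formula, and a dimension count ($\dim[\Mbar_{g,m}(\mathbf{P}^1,d)]^{\mathrm{vir}}=2g+2d+m-2$ minus codimension $g+2d-2$ gives the reduced dimension $g+m$) confirms consistency.

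I expect the main obstacle to be the comparison-of-obstruction-theories statement underlying the displayed factorization: one must check that the splitting of $R\pi_*\mathrm{ev}^*T_{\mathcal{A}_1}$ is compatible with the perfect obstruction theories relative to the Artin stack of curves, so that the excess normal factor contributes genuinely as the Euler class of $R^1\pi_*\mathrm{ev}^*N$ rather than merely in $K$-theory. The equivariant weight bookkeeping identifying $N=\mathcal{O}(-2)\otimes\mathbb{C}_{t_1+t_2}$ is delicate but routine, and it is precisely the triviality of the $\Delta$-action on $\mathcal{U}$ that ensures the linear-in-$(t_1+t_2)$ coefficient is an honest nonequivariant Chern class rather than a mixed equivariant expression.
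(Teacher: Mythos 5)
Your proposal is correct and takes essentially the same route as the paper: both reduce via Lemma \ref{reducedlemma} to extracting the linear part in $(t_1+t_2)$ of the standard $T$-equivariant theory of the total space of $\mathcal{O}(-2)$, identify the difference of obstruction theories with the (twisted) obstruction bundle $R^1\pi_{\ast}\mathrm{ev}^{\ast}\mathcal{O}(-2)$, and read off the penultimate Chern class $c_{g+2d-2}$ as the linear coefficient of the Euler class expansion. The paper compresses this into the remark that ``it suffices to check that the linear part of this expression is precisely the penultimate Chern class''; your equivariant weight bookkeeping (the normal bundle as $\Omega_{\mathbf{P}^1}\otimes\mathbb{C}_{t_1+t_2}$, vanishing of $R^0$, triviality of the diagonal action) is exactly the implicit check carried out in full.
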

\begin{proof}
By lemma \ref{reducedlemma}, we want to calculate the linear part of the standard $T$-equivariant theory of the total space of $\mathcal{O}(-2)$.  The obstruction theory of this space differs from that of $\mathbf{P}^1$ by
the total Chern class $c(R\pi_{\ast}\mathrm{ev}^{\ast}\mathcal{O}(-2))$.  It suffices to check that the linear part of
this expression is precisely the penultimate Chern class of degree $g+2d-2$.     
\end{proof}

\subsection{Degree dependence}

We first analyze the degree dependence for the $\mathcal{A}_{1}$ surface and reduce the general $\An$ 
surface to this case.

\begin{prop}\label{degdep} 
$$\langle \prod_{i=1}^{r} \tau_{a_{i}}(1) \prod_{j=1}^{s} \tau_{b_{j}}(\omega) 
 \rangle^{\mathcal{A}_{1}, \mathrm{red}}_{g, d} =  d^{2g+s-3} \cdot
  \langle \prod_{i=1}^{r} \tau_{a_{i}}(1) \prod_{j=1}^{s} \tau_{b_{j}}(\omega) 
 \rangle^{\mathcal{A}_{1}, \mathrm{red}}_{g, 1}$$
\end{prop}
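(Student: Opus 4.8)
The plan is to reduce the statement to an integral over $\Mbar_{g,m}(\mathbf{P}^1,d)$ and then to extract the degree dependence by a $T$-equivariant localization, reading off the individual weights through the divisor equation.

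First I would rewrite both sides using Corollary \ref{reducedcor2} together with the identification $\Mbar_{g,m}(\mathcal{A}_1,d[E]) = \Mbar_{g,m}(\mathbf{P}^1,d)$, valid because every stable map in class $d[E]$ has image in the exceptional curve $E\cong\mathbf{P}^1$. The reduced invariant becomes
\[
F_d := \int_{[\Mbar_{g,m}(\mathbf{P}^1,d)]}c_{g+2d-2}(R\pi_{\ast}\mathrm{ev}^{\ast}\mathcal{O}(-2))\prod_{i}\psi_i^{a_i}\prod_{j}\psi_j^{b_j}\,\mathrm{ev}_j^{\ast}[\mathrm{pt}],
\]
where I use that $\omega\cdot E = 1$, so $\mathrm{ev}^{\ast}\omega$ restricts to the point class on $E$, while $\mathrm{ev}^{\ast}(1)$ remains the identity. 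I also record that $\omega^2=0$ in $H^{\ast}(\mathcal{A}_1,\QQ)$, since $\mathcal{A}_1$ retracts onto $E$; this kills the correction terms appearing below. The goal is then precisely to show $F_d = d^{2g+s-3}F_1$.

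Second, following the outline, I would evaluate $F_d$ by localization. By Lemma \ref{reducedlemma} it suffices to extract the coefficient of $(t_1+t_2)$ in the full standard $T$-equivariant theory of the total space of $\mathcal{O}(-2)\to\mathbf{P}^1$, which I compute by virtual localization with respect to $T=(\mathbb{C}^{\ast})^2$. Because $d[E]\neq 0$, every $T$-fixed locus is supported over $E$, so the fixed loci are exactly the Kontsevich localization graphs for maps to $E$, decorated by the fiber ($\mathcal{O}(-2)$) obstruction contributions at the two fixed points $p_1,p_2$ with weights $2t_1,2t_2$. The point insertions $\tau_{b_j}(\omega)$ force their markings onto $p_1$ or $p_2$, the identity insertions $\tau_{a_i}(1)$ are unconstrained, each edge covering $E$ of degree $\delta$ contributes the usual multiple-cover factor, and each vertex contributes a Hodge integral that is independent of the total degree $d$.

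Third, I would track the $d$-dependence of this graph sum, for which the cleanest bookkeeping is the divisor equation in the reduced theory. Since $\beta\cdot\omega = d$ and $\omega^2=0$, every point insertion beyond a minimal rigidifying set contributes exactly one factor of $d$, giving the $d^{s}$ part, while the residual scaling $d^{2g-3}$ should arise from the edge (multiple-cover) weights together with the Euler characteristic of the genus-$g$ domain after rigidifying the three-dimensional automorphism group of the target. The main obstacle is precisely this collapse to a monomial: a priori the localization sum is a complicated function of $d$, namely a sum over edge-degree distributions of $d$-weighted Hodge integrals, and the content of the proposition is that after taking the coefficient of $(t_1+t_2)$ — which isolates a single fiber-obstruction factor and trivializes the higher-order $(t_1+t_2)$-dependence — the whole sum telescopes into $d^{2g+s-3}$ times the residual degree-one graph sum, that is, times $F_1$. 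To make this rigorous I would package the graphs into a generating function in $d$, prove it is a monomial of the predicted degree using the scaling of the edge contributions under $\delta\mapsto d\delta$, and match the leading coefficient against $F_1$; equivalently, one can express the degree-independent core as a rubber integral and invoke the rigidification relation of the local curve theory to strip off the target automorphisms.
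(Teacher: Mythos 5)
Your overall skeleton (localize equivariantly, extract the coefficient of $t_1+t_2$ via Lemma \ref{reducedlemma}) is the same as the paper's, but the proposal is missing the one idea that makes the argument close, and you flag the resulting hole yourself when you call the collapse of the graph sum to a monomial ``the main obstacle.'' The mechanism the paper uses is this: an edge of degree $a$ covering $E$ contributes the product of weights of $H^1(\mathbf{P}^1,\mathcal{O}(-2a))$, namely $2t_1,\,2t_1+\tfrac{t_2-t_1}{a},\dots,(t_1+t_2),\dots,2t_2$, so \emph{every edge carries exactly one factor of $(t_1+t_2)$}, while all weights appearing in denominators have the form $it_1+jt_2$ with $ij\le 0$. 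Hence, working modulo $(t_1+t_2)^2$ --- which is all that is needed to extract the linear coefficient --- every graph with two or more edges dies, and one is left only with graphs having a single edge of degree $d$ joining two contracted vertices. Without this observation, your proposed remedies (packaging graphs into a generating function, scaling edge degrees $\delta\mapsto d\delta$, or rephrasing as a rubber integral) do not constitute a proof: the multi-edge graph sum is genuinely not a monomial in $d$ term by term, and nothing in your write-up shows the cross terms cancel.

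Two of the specific claims you lean on are also wrong. First, the vertex contributions are \emph{not} independent of $d$: in the surviving single-edge graphs they carry essentially all of the $d$-dependence, through the smoothing factors $1/(\psi-\tfrac{t_2-t_1}{d})$ at the node; the paper expands these, uses Mumford's relation $\Lambda^{\vee}(2t_1)\Lambda^{\vee}(t_2-t_1)\equiv(-1)^{g_i}(2t_1)^{2g_i} \bmod (t_1+t_2)$ to make the Hodge factors $d$-independent, and reads off exponents $3g_1-g+s-1$ and $3g_2-1$ from the two vertices, which together with the $1/d$ from the edge give exactly $d^{2g+s-3}$. Second, the divisor equation cannot produce the $d^{s}$ factor the way you assert: it removes only primary insertions $\tau_0(\omega)$, not descendents $\tau_{b_j}(\omega)$ with $b_j>0$ (and even for $\tau_0(\omega)$ the correction terms against the $\tau_{a_i}(1)$ insertions do not vanish, since $\omega\cdot 1\neq 0$). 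A further useful device you omit is the choice of equivariant lift: replacing $\omega$ by the noncompact fixed divisor $E_0$ (legitimate because $\omega=E_0+t_2$ and the terms with $1$-insertions vanish by the dimension/divisibility argument) forces all $s$ stationary markings onto the single vertex over $p_1$, which is what makes the single-edge contribution computable in closed form.
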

\begin{proof}
To simplify the analysis, assume $r=0$.  
By the results of the last section, we can compute
these invariants by virtual localization and extract the term proportional to $(t_1+t_2)$.  Note that we have the equality
$$\omega = E_0 +t_2 = E_2 + t_1,$$ where
$E_0, E_2$ are the noncompact $T$-fixed divisors at the fixed points $p_1, p_2$.  As 
already discussed, any invariant must be divisible by $(t_1+t_2)$.  Therefore, if we replace any of the divisors in our invariant by $1$, the invariant will vanish for dimension reasons.
In particular, we can replace $\omega$ with either $E_0$ or $E_2$ without affecting the answer.  Let us assume we have replaced them with $E_0$; in section \ref{stationarysection}, it will be useful to consider different combinations of these insertions.

Virtual localization expresses the invariant as a sum over a large number
of connected components of fixed loci.  Each such component consists of curves contracted over 
a fixed point of $\mathcal{A}_{1}$ along with edges corresponding to rational curves totally ramified over $E$.  The key observation is that only graphs with a single edge contribute to the linear term and each of these graphs has the same $d$-dependence.  We refer the reader to \cite{grabpan}
for a detailed explanation of the contributions to virtual localization.

\begin{figure*}[htp]
\centering
\psfrag{d}{$d$}
\psfrag{p}{$\mathbf{P}^1$}
\psfrag{m}{$\mu$}
\psfrag{n}{$\nu$}
\includegraphics[scale=0.50]{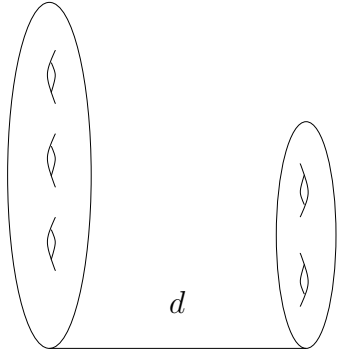}
\caption{Localization configuration}
\end{figure*}

Each edge corresponds to a totally ramified rational curve mapping to $E$ with degree $a > 0$.  The contribution of this edge to the localization term for this graph is the product of weights for $H^1(\mathbf{P}^1, \mathcal{O}(-2a))$:
$$2t_1, 2t_1+ \frac{t_2-t_1}{a},\dots (t_1+t_2),\dots, 2t_2 - \frac{t_2-t_1}{a}, 2t_2.$$
Therefore each edge contributes a factor of $(t_1+t_2)$.  Moreover, it is easy to see that all weights that occur in the denominator are of the form $it_1+jt_2$ where $i\cdot j \leq 0$.  Since
we are trying to calculate the linear term of an equivariant polynomial, it suffices to calculate the localization sum modulo $(t_1+t_2)^{2}$, in which case only graphs with a single edge contribute.

These graphs consist of a single contracted curve of genus
$g_1$ over $p_1$ which contains the $s$ marked points, a single contracted curve of genus $g_2= g-g_1$ over $p_2$, and a single edge of degree $d$ connecting them.  The contribution of vertex over $p_1$ of this graph is

$$(t_2-t_1)^{s}\int_{\Mbar_{g_1,s+1}} \prod_{i=1}^{s}\psi_{i}^{b_{i}}\frac{\Lambda^{\vee}(2t_1)\Lambda^{\vee}(t_2-t_1)}{\psi_{s+1} - \frac{t_2-t_1}{d}}.$$

In this expression, $\Lambda(t) = \lambda_g  + \lambda_{g-1} t + \dots  + t^{g}$ is the Chern
polynomial of the Hodge bundle on $\Mbar_{g,n}$.

Similarly, the contribution of the vertex over $p_2$ of this graph is
$$\int_{\Mbar_{g_2,1}} \frac{\Lambda^{\vee}(2t_2)\Lambda^{\vee}(t_1-t_2)}{\psi - \frac{t_1-t_2}{d}}.$$
The edge contribution is given by 
$$\frac{1}{d}\frac{(t_1+t_2)\prod_{k=1}^{d-1}((t_1+t_2)+ \frac{k(t_2-t_1)}{d})\cdot((t_1+t_2)- \frac{k(t_2-t_1)}{d})}{\prod_{k=1}^{d}\frac{k(t_2-t_1)}{d}\frac{-k(t_2-t_1)}{d}}.$$

We now analyze the $d$-dependence.
Again, since we are working modulo $(t_1+t_2)^2$ and the edge term carries a factor of $(t_1+t_2)$, 
we can calculate the vertex terms and the rest of the edge factors modulo $(t_1+t_2)$.  The $d$-dependence of the
edge term is $1/d$.  For the vertex terms, by Mumford's relation on Hodge classes, we have that
$$\Lambda^{\vee}(2t_1)\Lambda^{\vee}(t_2-t_1) \equiv (-1)^{g_i}(2t_1)^{2g_{i}} \mod (t_1+t_2).$$
Therefore the $d$-dependence is given entirely from the cotangent lines in the denominator of each expression.  For the vertex over $p_1$,
the exponent of $d$ is
$$3g_1 +s +1 - 3 - \sum b_i+1 = 3g_1 - g+s-1$$
and for the vertex over $p_2$ this is
$$3g_2 - 2+1.$$  The total $d$-dependence is exactly
$d^{2g+s-3}$.

If we include descendents of $1$ or Hodge classes, the argument applies unchanged.
\end{proof}

\subsection{$\An$ dependence}

The same localization argument allows us to reduce the $\An$ geometry to the $\mathcal{A}_{1}$ surface as described in Theorem \ref{mainthm}.  We give another argument for this reduction in section \ref{dandesection}.

\begin{prop}
If $\beta = d\alpha$ for a root class $\alpha = \alpha_{ij}$ then we have
$$\langle \prod_{k=1}^{r} \tau_{a_{k}}(1) \prod_{l=1}^{s} \tau_{b_{l}}(\omega_{l}) 
 \rangle^{\An, \mathrm{red}}_{g, \beta} =  d^{2g+s-3} \cdot \prod_{l=1}^{s}(\alpha\cdot \omega_{l})\cdot
  \langle \prod_{k=1}^{r} \tau_{a_{k}}(1) \prod_{l=1}^{s} \tau_{b_{l}}(\omega) 
 \rangle^{\mathcal{A}_{1}, \mathrm{red}}_{g, 1}.$$
 Otherwise, the reduced theory vanishes.
\end{prop}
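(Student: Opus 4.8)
The plan is to run the virtual localization argument of Proposition \ref{degdep}, now applied to the chain $E_i, \dots, E_{j-1}$, and to track the order of vanishing in $(t_1+t_2)$ graph by graph. By Lemma \ref{reducedlemma} the reduced invariant is the coefficient of $(t_1+t_2)$ in the standard $T$-equivariant invariant, so I would compute the latter by localization and isolate its linear part. The $T$-fixed loci are indexed by decorated graphs whose edges are totally ramified covers of the compact curves $E_1, \dots, E_n$ and whose vertices are contracted components lying over the fixed points $p_1, \dots, p_{n+1}$.

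The essential input is the Calabi--Yau relation, that at every fixed point $p_k$ the two tangent weights sum to $t_1+t_2$, which has two consequences. First, exactly as in the $\mathcal{A}_1$ computation, an edge covering any $E_k$ has normal bundle $\mathcal{O}(-2)$, and the $H^1$ of its degree-$a$ pullback $\mathcal{O}(-2a)$ contains the central weight $t_1+t_2$; hence each edge contributes one factor of $(t_1+t_2)$ to the numerator. Second, and this is the new feature for $n\geq 2$, when two edges covering adjacent curves $E_{k-1}$ and $E_k$ of the \emph{same} degree $d$ meet in a node over $p_k$, the smoothing weight of that node is $\tfrac{1}{d}\big(T_{E_{k-1}}|_{p_k} + T_{E_k}|_{p_k}\big) = \tfrac{1}{d}(t_1+t_2)$, contributing a factor of $(t_1+t_2)$ to the denominator. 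As in Proposition \ref{degdep}, all remaining denominator weights are of the form $it_1+jt_2$ with $ij\leq 0$ and so are invertible modulo $(t_1+t_2)$.

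Next I would determine which graphs survive in the linear term. A connected graph whose image is the chain $E_i\cup\dots\cup E_{j-1}$ has at least $j-i$ edges, hence numerator order at least $j-i$; the only way to cancel this down to order one is to join $j-i$ equal-degree edges by $j-i-1$ consecutive direct nodes, each supplying a denominator $(t_1+t_2)$. This forces a single chain of degree-$d$ edges over $E_i,\dots,E_{j-1}$, with contracted vertices, carrying the genus and all marked points, attached without interrupting the chain; in particular it forces $\beta = d\,\alpha_{ij}$. Any other configuration, namely an image that is not a contiguous interval, unequal degrees over adjacent curves, additional edges over a single $E_k$, or chains broken by intermediate contracted vertices, has net order at least two in $(t_1+t_2)$ and therefore drops out of the reduced invariant. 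This is precisely the vanishing assertion for $\beta$ not a root multiple.

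Finally, for $\beta = d\,\alpha_{ij}$ I would identify the surviving contribution with the $\mathcal{A}_1$ computation. After cancelling the edge and node factors, the chain of $j-i$ equal-degree edges collapses to a single effective edge of degree $d$; the flanking contracted vertices contribute exactly the $\mathcal{A}_1$ vertex integrals of Proposition \ref{degdep} and Corollary \ref{reducedcor2}, and the same Mumford-relation argument extracts the $d^{2g+s-3}$ scaling and reduces to the degree-one invariant. Each divisor insertion $\tau_{b_l}(\omega_l)$ is evaluated at a point on the chain, and summing the $\mathrm{ev}^{\ast}\omega_l$ contributions over the components $E_i,\dots,E_{j-1}$ replaces the single $\mathcal{A}_1$ intersection number by $\alpha\cdot\omega_l$; collecting these yields the prefactor $\prod_l(\alpha\cdot\omega_l)$. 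I expect the main obstacle to be this last step: verifying that the full chain contribution, including the interplay of the edge $H^1$ weights, the node smoothings, and the localization of the divisor classes along the chain, reassembles \emph{exactly} into the degree-one $\mathcal{A}_1$ invariant with the stated combinatorial prefactor, rather than merely up to an undetermined constant.
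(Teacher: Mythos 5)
Your graph analysis is exactly the paper's proof: each edge contributes a factor of $(t_1+t_2)$; a direct node between edges of degrees $a$ and $b$ lying in distinct tangent directions contributes the smoothing weight $\frac{v_1}{a}+\frac{v_2}{b}$, proportional to $t_1+t_2$ precisely when $a=b$; the order of vanishing is at least (number of edges) minus (number of such nodes); and the multiplicity-one graphs are single equal-degree chains over $E_i,\dots,E_{j-1}$ with contracted curves only at the two ends, which gives the vanishing for non-root classes and the degree scaling. One small correction to your setup: the claim that all remaining denominator weights have the form $it_1+jt_2$ with $ij\leq 0$ is false for $n\geq 2$ (smoothing weights $\frac{v_1}{a}+\frac{v_2}{b}$ with $a\neq b$ can have two positive coefficients); what the argument needs, and what holds by your own characterization of the special nodes, is only that these weights are not proportional to $t_1+t_2$.

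The step you flagged is genuinely where your sketch goes wrong, although the conclusion is right. The divisor insertions are not ``evaluated at a point on the chain,'' and there is no sum of $\mathrm{ev}^{\ast}\omega_l$ over the components $E_i,\dots,E_{j-1}$: in a multiplicity-one graph every marking lies over one of the two \emph{endpoint} fixed points of the chain, because the interior fixed points are direct edge-to-edge nodes (a marked point cannot be a node, and replacing such a node by a contracted component carrying the marking raises the multiplicity to at least two). Hence $\mathrm{ev}^{\ast}\omega_l$ restricts to $\omega_l|_{p_i}$ or $\omega_l|_{p_j}$ only. The paper's bookkeeping, after reducing to $\alpha=\alpha_{1,n+1}$, is: insertions of the interior divisors $E_2,\dots,E_{n-1}$ restrict to zero at both endpoints and kill the invariant, consistent with $E_k\cdot\alpha=0$; the graph contributions with $E_1$ and $E_n$ insertions coincide with the corresponding $\mathcal{A}_1$ graph contributions with $-E_0$ and $-E_2$ insertions, which by the argument of Proposition \ref{degdep} may be traded for $\omega$; and the signs are absorbed by $E_1\cdot\alpha=E_n\cdot\alpha=-1$. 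Linearity in $\omega_l$ then produces the prefactor $\prod_l(\alpha\cdot\omega_l)$ exactly, with no undetermined constant left over.
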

\begin{proof}
Again, to simplify the analysis we ignore 
descendents of $1$.  As before, every edge in a given localization graph carries a factor
of $(t_1+t_2)$.  However, unlike the $\mathcal{A}_{1}$ analysis, there are vertex contributions that
have factors of $(t_1 + t_2)$ in the denominator.  When an edge of degree $a$ and an edge of degree $b$ meet at a fixed point with tangent weights $v_1$ and $v_2$ without a contracted curve joining them, there is a factor of 
$$\frac{v_1}{a} + \frac{v_2}{b}$$ 
to the fixed locus corresponding to smoothing that node.  In our case, this will be proportional
to $t_1+t_2$ if and only if the tangent directions are distinct and $a=b$.

Therefore, the multiplicity of $(t_1+t_2)$ in our graph is at least the number of edges minus the number of these special nodes. This is always positive and the graphs with multiplicity 1 have the following form.  There is a curve of genus $g_1$ contracted to a fixed point $p_{i}$, followed by
a chain of rational curves mapping with degree $d$ to $E_{i}, \dots, E_{j-1}$, and a curve of genus $g-g_1$ contracted to the fixed point $p_{j+1}$.  In particular, if $\beta$ is not a multiple of a root, there
are no such localization graphs and the reduced invariant vanishes.  If $\beta = d \alpha_{ij}$, then the $d$-dependence is again the same for every relevant graph.

Finally, for the divisor insertions, we can assume $\beta = d \alpha_{1,n+1}$.  Since all marked points must map to either $p_1$ or $p_{n+1}$, any divisor insertion $E_2, \dots, E_{n-1}$ gives vanishing.  This is consistent with the fact that $E_{k}\cdot \alpha_{1,n+1} = 0$ for these divisors.  A direct computation shows that the localization graph contribution with divisors $E_1$ and $E_n$ is identical to
the corresponding graph contribution on $\mathcal{A}_{1}$ with $-E_0$ and $-E_2$ insertions.  As discussed, these can be replaced with $\omega$ insertions; the signs are accounted for by
the fact that $E_1\cdot \alpha_{1,n+1} = E_n \cdot \alpha_{1,n+1} = -1$.

Again, adding descendents of $1$ and Hodge classes does not affect the argument.
\end{proof}

\subsection{Stationary descendents}
We have reduced the theorem to the case of $\mathcal{A}_{1}$, degree $d=1$.  It is convenient
to treat this case using the expression from corollary \ref{reducedcor2} for the reduced class in terms of $\Mbar_{g}(\mathbf{P}^1,d)$
with obstruction bundle induced by $\mathcal{O}(-2)$.
The divisor insertion $\omega$ on $\mathcal{A}_{1}$ is the cohomology class of a
point in $\mathbf{P}^1$.  We first assume there are no descendents of $1$, i.e. the stationary case.

\begin{prop}\label{statprop}
$$\langle \prod_{j=1}^{s} \tau_{b_{j}}(\omega) 
 \rangle^{\mathcal{A}_{1}, \mathrm{red}}_{g, 1} = \prod_{j=1}^s \frac{b_j!}{(2b_j+1)!}\left(-\frac{1}{2}
\right)^{b_j}.$$
\end{prop}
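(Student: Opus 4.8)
The plan is to evaluate these integrals by transporting the geometry from $\mathcal{A}_1$ down to $\mathbf{P}^1$ via Corollary \ref{reducedcor2}, and then applying virtual localization a second time, now with respect to a $\mathbf{C}^\ast$-action on the target $\mathbf{P}^1$. First I would rewrite
$$\langle \prod_{j=1}^s \tau_{b_j}(\omega)\rangle^{\mathcal{A}_1,\mathrm{red}}_{g,1} = \int_{\Mbar_{g,s}(\mathbf{P}^1,1)} \prod_{j=1}^s \psi_j^{b_j}\,\mathrm{ev}_j^\ast[\mathrm{pt}]\cdot c_g\!\left(R\pi_\ast\mathrm{ev}^\ast\mathcal{O}(-2)\right),$$
using that $\omega$ is the point class on $\mathbf{P}^1$ and that $g+2d-2=g$ for $d=1$. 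The point conditions $\mathrm{ev}_j^\ast[\mathrm{pt}]$ force every marked point onto the unique component of the domain mapping isomorphically to $\mathbf{P}^1$; consequently a nonzero power $\psi_j^{b_j}$ can only be supported on loci where a contracted tail has bubbled off at the $j$-th point. This is the geometric origin of the product structure: each insertion $\tau_{b_j}(\omega)$ localizes the integral onto strata carrying an independent contracted curve at its marked point, and the genus is partitioned among these tails with $\sum_j b_j = g$.

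I would make this precise in two compatible ways. On the one hand, the normalization sequence for $f^\ast\mathcal{O}(-2)$ on a domain $C = R \cup \bigcup_j B_j$, with $R\cong\mathbf{P}^1$ the degree-one component and $B_j$ the contracted tail at the $j$-th point, shows that $R^1\pi_\ast f^\ast\mathcal{O}(-2)$ decomposes — up to a trivial summand coming from $H^1(R,\mathcal{O}(-2))$ — into the dual Hodge bundles of the tails; the trivial summand is exactly why the reduced class involves the penultimate Chern class $c_g$ rather than the top one. On the other hand, representing $[\mathrm{pt}]$ by the $\mathbf{C}^\ast$-fixed points of $\mathbf{P}^1$ and applying virtual localization expresses the integral as a sum over chain-type graphs, each consisting of a contracted vertex over $0$, a degree-one edge, and a contracted vertex over $\infty$, in which the obstruction bundle and the descendent insertions become Hodge-type vertex integrals of exactly the shape appearing in the proof of Proposition \ref{degdep}.

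The remaining work is then purely on the moduli of curves. As in the earlier localization arguments, Mumford's relation collapses the $\Lambda^\vee$ Hodge factors, so that each vertex and each tail contributes a one-pointed Hodge integral of the form $\int_{\Mbar_{h,1}}\psi^{a}\lambda_h$, paired with the cotangent-line denominator supplied by the edge. Summing the chain contributions over all genus splittings and tail distributions should factor the entire integral across the marked points, producing $\prod_j(\text{local factor})$, and I would evaluate each local factor in closed form to obtain $\frac{b_j!}{(2b_j+1)!}\!\left(-\tfrac{1}{2}\right)^{b_j}$.

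I expect the main obstacle to be twofold. First, the combinatorics of the localization sum must be organized carefully enough to exhibit genuine factorization over the insertions, rather than a mixing of the marked points through the shared edge and through the common vertex that hosts them; the normalization-sequence splitting of the obstruction bundle is what I would use to control this. Second, one must evaluate the resulting one-point Hodge integrals explicitly, for which I would appeal to the known closed-form evaluations of such $\psi$-$\lambda$ integrals (of Faber--Pandharipande or Mari\~no--Vafa type), or equivalently package them into a generating function and identify it. Assembling the per-point evaluations then yields the stated product formula, completing the base case to which Theorem \ref{mainthm} has been reduced.
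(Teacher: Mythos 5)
Your reduction to $\Mbar_{g,s}(\mathbf{P}^1,1)$ with the insertion $c_g(R\pi_\ast\mathrm{ev}^\ast\mathcal{O}(-2))$ is the right starting point, and your plan for the one-point case (localization plus known Hodge/$\psi$ series) matches what the paper does. The genuine gap is the factorization over insertions, which is the heart of the proposition, and your localization scheme cannot deliver it as described. In the $\mathbb{C}^{\ast}$-fixed-point picture on $\Mbar_{g,s}(\mathbf{P}^1,1)$ there is no ``independent contracted tail at each marked point'': a fixed locus is a product $\Mbar_{g_1,|S_1|+1}\times\Mbar_{g_2,|S_2|+1}$, and all marked points assigned to a given vertex lie on one and the same contracted curve. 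After Mumford's relation kills the Hodge factors, that vertex contributes multi-point Witten intersection numbers $\int_{\Mbar_{g_1,k+1}}\psi_1^{b_{j_1}}\cdots\psi_k^{b_{j_k}}\psi_{k+1}^{a}$, so the asserted product formula becomes a nontrivial identity among $(k+1)$-point $\psi$-integrals of the Witten--Kontsevich tau-function --- essentially equivalent to the statement you are trying to prove --- and nothing in your proposal supplies it. The normalization-sequence splitting you invoke describes the restriction of the obstruction bundle to comb-type boundary strata (a spine with one tail per marked point); that is the geometry of a degeneration argument, not of the localization fixed loci, and conflating these two pictures is exactly where the plan breaks.

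The paper supplies the missing idea with the degeneration formula: degenerate $\mathcal{A}_1$ to a comb whose spine carries normal bundle $\mathcal{O}(-2)$ and whose $s$ teeth have trivial normal bundle, with each $\omega$-insertion on its own tooth. Because the curve class has degree $1$, the degeneration formula has a single term with a unique relative condition, the genus-$0$ spine contributes $1$, and the obstruction class restricts to $\lambda_{h_i}$ on the tooth of genus $h_i$; multiplicativity over insertions is then immediate. Only after this reduction is localization used, for the one-point case $s=1$, $g=b_1$, where the answer is extracted as the coefficient of $u^{3g}$ in the product of two series evaluated by Faber--Pandharipande. To repair your argument, replace the hoped-for factorization of the localization sum by this degeneration step; proving the factorization directly at the level of $\psi$-integrals would be substantially harder than the proposition itself.
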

\begin{proof}
We evaluate using the degeneration formula.  There exists a degeneration of $\mathcal{A}_{1}$ to a comb configuration consisting of a central $\mathbf{P}^1$
with normal bundle $\mathcal{O}(-2)$ and $s$ rational teeth with trivial normal bundle.  In the
degenerate limit, each $\omega$ insertion lies on a distinct tooth.  Since our map has degree $1$, there
is no need to sum over possible configurations or relative conditions.  

The obstruction class insertion degenerates to
$c_{g+2d-2}(R\pi_{\ast}\mathrm{ev}^{\ast}\mathcal{O}(-2))$ on the spine and $\lambda_{h_i}$ on each tooth, where $h_i$ is the genus of the domain curve.  The curve mapping to the spine is forced to be genus $0$ and its contribution is clearly $1$.  As a result, the degree $1$ computation in the stationary case is multiplicative in its insertions.

Finally, in the case $s=1$, where $g= b_1$, we compute directly through a localization calculation sketched in the
proof of proposition \ref{degdep}.  We want the coefficient of $u^{3g}$
in the following product
$$\left( \sum_{g_1=0}^{\infty} \langle\tau_g \tau_{3g_1-g}\rangle u^{3g_1}\right)\cdot \left(\sum_{g_2=0}^{\infty} \langle \tau_{3g_2-2}\rangle u^{3g_2}\right)$$
where we are using bracket shorthand for integrals on $\Mbar_{g_i,n}$.  Both of these power series have already been computed in \cite{faber-pandharipande}.
It is easy to isolate the desired coefficient as
$$\frac{g!}{(2g+1)!}(-\frac{1}{2})^{g}.$$
\end{proof}

\subsection{Virasoro rule for general insertions}
It remains to prove theorem \ref{mainthm} for $\mathcal{A}_{1}$, degree 1, with descendents of $1$.  In non-equivariant Gromov-Witten theory, these insertions can conjecturally be removed using Virasoro constraints.  However, there is not even a conjectural picture of Virasoro constraints for equivariant Gromov-Witten theory or reduced Gromov-Witten theory.  Instead, our strategy is to use degeneration
arguments to embed our reduced invariants in 
toric projective Fano surfaces where these constraints exist and are well understood.

More precisely, we will prove the following Virasoro-type relation.  This relation will uniquely determine 
the full degree 1 theory of $\mathcal{A}_{1}$ in terms of the stationary case.  A direct calculation shows that 
equation (\ref{mainform}) is the unique solution.  Of course, equation (\ref{mainform})
gives a much simpler removal rule for descendents of $1$ but
we know of no direct proof. 

In what follows, we write 
$[\alpha]^{p}_{q}$ for the coefficient of $x^q$ in 
$(x+\alpha)(x+\alpha+1)\dots(x+\alpha+p)$.

\begin{prop}
\begin{align*}
\langle \tau_{a+1}(1) \prod_{i=1}^{r} &\tau_{a_{i}}(1) \prod_{j=1}^{s} \tau_{b_{j}}(\omega)
\rangle^{\mathcal{A}_{1},\mathrm{red}}_{g,1} \cdot \left[\frac{1}{2}\right]^{a}_{0}
= \\
&-(2a+2)
\langle \tau_{a}(\omega) \prod_{i=1}^{r} \tau_{a_{i}}(1)\prod_{j=1}^{s} \tau_{b_{j}}(\omega)\rangle^{\mathcal{A}_{1},\mathrm{red}}_{g,1}\\
&+\sum_{i=1}^{r}  \left[a_{i}-\frac{1}{2}\right]^{a}_{0}
\langle \tau_{a_{i}+a}(1) \prod_{k \neq i} \tau_{a_{k}}(1)\prod_{j=1}^{s} \tau_{b_{j}}(\omega)\rangle^{\mathcal{A}_{1},\mathrm{red}}_{g,1}\\
&+\left(\left[a_{i}+\frac{1}{2}\right]^{a}_{0}-\left[a_{i}-\frac{1}{2}\right]^{a}_{0}\right)
\langle \tau_{a_{i}+a-1}(\omega) \prod_{k \neq i} \tau_{a_{j}}(1)\rangle^{\mathcal{A}_{1},\mathrm{red}}_{g,1}\\
&+\sum_{j=1}^{s}
\cdot \left[b_{j}+\frac{1}{2}\right]^{a}_{0}
\langle \tau_{b_{j}+a}(\omega) \prod_{i} \tau_{a_{i}}(1)\prod_{k\neq j} \tau_{b_{k}}(\omega)\rangle^{\mathcal{A}_{1},\mathrm{red}}_{g,1}\\
&+\sum_{m}(-1)^{m}\left[-m-\frac{1}{2}\right]^{a}_{0}
\langle \tau_{m}(\omega)\tau_{k-m-1}(\omega) \prod_{i=1}^{r} \tau_{a_{i}}(1) \prod_{j=1}^{s} \tau_{b_{j}}(\omega)
\rangle^{\mathcal{A}_{1},\mathrm{red}}_{g,1}
\end{align*}
\end{prop}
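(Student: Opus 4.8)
The plan is to deduce this relation from the Virasoro constraints of a compact toric surface, which I can attach to the reduced $\mathcal{A}_1$ theory by a degeneration. Since no notion of Virasoro constraints is available for reduced or $T$-equivariant theory, the point is to import the constraints from a compact geometry where they are proven. First I would use Corollary \ref{reducedcor2} to rewrite every reduced $\mathcal{A}_1$ invariant in degree $1$ as a descendent integral over $\Mbar_{g,m}(\mathbf{P}^1,1)$ against the penultimate Chern class $c_g(R\pi_\ast \mathrm{ev}^\ast \mathcal{O}(-2))$ of the obstruction bundle, with the divisor $\omega$ becoming the point class on $\mathbf{P}^1$. This presents the question as a twisted descendent theory of $\mathbf{P}^1$, which is precisely the kind of local contribution carried by a surface containing a $(-2)$-curve.

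Next I would connect these integrals to a compact toric Fano surface $S$ by a degeneration whose special fiber contains a bubble isomorphic to the $\mathcal{A}_1$-neighborhood of a $(-2)$-curve. In the degenerate total space the degeneration formula writes a descendent invariant of $S$ as a sum of products of a relative invariant of the $\mathcal{A}_1$-bubble with a relative invariant of the remaining Fano component. Because the bubble carries a holomorphic symplectic form, its relative contribution is governed by the reduced class, and the relative point along the gluing divisor supplies exactly the extra dimension by which the reduced class exceeds the standard one, as measured by the $(t_1+t_2)$ factor of Lemma \ref{reducedlemma}. Arranging the relative and descendent data as in the comb degeneration of Proposition \ref{statprop} should isolate the reduced degree-$1$ $\mathcal{A}_1$ invariants with explicitly computable Fano-side factors.

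With $S$ in hand I would apply the Virasoro operator $L_a$ to the descendent potential of $S$ — a relation that holds because the quantum cohomology of a toric surface is semisimple, by the work of Givental and Teleman — and translate each resulting term back through the degeneration formula. The linear part of $L_a$ acting on the existing insertions should produce the terms raising the descendent levels of the $\tau_{a_i}(1)$ and $\tau_{b_j}(\omega)$ by $a$; the $c_1(S)$-dependent term should produce the insertion $-(2a+2)\tau_a(\omega)$ together with the identity-to-$\omega$ conversion; and the quadratic, node-creating part of $L_a$ should produce the splitting sum over $m$ with its sign $(-1)^m$. The decisive structural feature is that the Hodge grading shift of $L_a$ on a surface takes the value $p-\tfrac12$ on a class in $H^{2p}$: the identity ($p=0$) contributes the shift $-\tfrac12$ and a divisor ($p=1$) contributes $+\tfrac12$, which is exactly what generates the half-integer Pochhammer symbols $[a_i-\tfrac12]^a_0$, $[b_j+\tfrac12]^a_0$, $[-m-\tfrac12]^a_0$ and the prefactor $[\tfrac12]^a_0$ attached to the removed insertion $\tau_{a+1}(1)$.

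The main obstacle I anticipate is the bookkeeping in the two previous paragraphs: verifying that the degeneration isolates the reduced $\mathcal{A}_1$ invariant with trivial or explicitly computable correction factors, so that $L_a$ on $S$ descends to exactly the stated relation with no spurious terms, and matching every coefficient — in particular reconciling the standard top-Chern class on $S$ with the reduced penultimate class and tracking how the induced shift in $c_1(S)\cdot\beta$ feeds the Pochhammer coefficients and the factor $-(2a+2)$, together with the combinatorics of the splitting term. Once the relation is established, it together with the stationary evaluation of Proposition \ref{statprop} determines the full degree-$1$ theory of $\mathcal{A}_1$ by induction on the descendent level of the identity insertions, and a direct substitution confirms that formula (\ref{mainform}) is the unique solution, completing the proof of Theorem \ref{mainthm}.
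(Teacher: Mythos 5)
Your opening move (Corollary \ref{reducedcor2}) and your closing remark (that the relation together with Proposition \ref{statprop} pins down the degree-$1$ theory uniquely) agree with the paper, but the central mechanism of your argument fails. You propose to bubble an $\mathcal{A}_1$-neighborhood of a $(-2)$-curve out of a compact toric Fano surface $S$ and to read off the reduced invariants from the degeneration formula for $S$. The curve classes you need, however, are the multiples $d[E]$ of the $(-2)$-curve, and these have intersection number zero with the gluing divisor $D$ of any such degeneration: stable maps in class $d[E]$ never touch $D$, so there is no ``relative point along the gluing divisor'' on them, and their contribution to the degeneration formula is just the ordinary absolute invariant of the holomorphic symplectic bubble --- which vanishes identically in nonequivariant Gromov--Witten theory. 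Lemma \ref{reducedlemma} says the reduced class is the coefficient of $(t_1+t_2)$ in the equivariant theory; a nonequivariant compact degeneration sees only the specialization $t_1+t_2=0$, i.e.\ the standard class, which is zero. Equivalently: under the classical deformation of $\mathbb{F}_2$ (whose $(-2)$-section has exactly your $\mathcal{A}_1$-neighborhood) to $\mathbf{P}^1\times\mathbf{P}^1$, the class $[E]$ ceases to be effective, consistent with this vanishing. So no choice of $S$ and no amount of bookkeeping can make Virasoro constraints on $S$ descend to the reduced theory through such a degeneration; the reduced invariants simply never enter the formula. (A secondary issue: a Fano surface contains no $(-2)$-curve at all, since $-K$ ample forces $C^2\geq -1$ for every irreducible curve, so even the degeneration you posit requires a nontrivial construction; but the vanishing above is the fatal point.)

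The paper circumvents precisely this obstruction, and never embeds the $(-2)$-geometry into a compact surface. Instead it degenerates the \emph{base} $\mathbf{P}^1$ of the twisted theory, splitting the ``level $-2$'' theory (penultimate Chern class of $R\pi_\ast\mathrm{ev}^\ast\mathcal{O}(-2)$, i.e.\ the reduced theory) into two copies of the ``level $-1$'' relative theory of $(\mathbf{P}^1,\mathrm{point})$. Only the level $-1$ and level $0$ theories are then matched with compact geometry --- with the \emph{nonvanishing} invariants of $\operatorname{Bl}_{p}\mathbf{P}^2$ in multiples of its $(-1)$-curve, and of $\mathbf{P}^1\times\mathbf{P}^1$ in fiber classes --- and there Givental's proven Virasoro constraints apply. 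The stated relation is then obtained by a chain of degenerations: absolute level $0$ determines relative level $0$; absolute level $-1$ together with relative level $0$ determines relative level $-1$; and two copies of relative level $-1$ determine the level $-2$ theory, with the auxiliary Hodge-integral series $F$ cancelling at the last stage. If you want to repair your argument, this intermediate passage through $(-1)$-curve geometry is what must replace your direct bubbling of the $(-2)$-curve.
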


\begin{proof}
Following \cite{localcurves}, we use the term level $k$ to refer to the
Gromov-Witten theory of $\mathbf{P}^1$ with obstruction bundle insertion 
$$R\pi_{\ast}\mathrm{ev}^{\ast}\mathcal{O}(k)$$
on the target.  We will only consider levels $0$, $-1$, and $-2$, where in the first two cases we consider
the top Chern class of the obstruction bundle but in the last case we
take the penultimate Chern class to recover the reduced theory.  In our notation, the level will be 
indicated by superscripts above the brackets.

In order to prove the above relation for level $-2$, we can degenerate $\mathbf{P}^1$ to two rational curves glued at node,
each with normal bundle of degree $-1$.  While we do not have Virasoro rules for $\mathbf{P}^1$ relative to a point, we can continue the reduction process by writing these invariants in terms of absolute theory of $\mathbf{P}^1$ at levels $-1$ and $0$.

The invariants at level $0$ and level $-1$ can be treated as (non-reduced) Gromov-Witten invariants of toric projective Fano surfaces.  In the case of level $-1$, consider the class $E$ of the exceptional divisor 
on the blowup 
$$Y_1 = \operatorname{Bl}_{p}\mathbf{P}^2.$$ 
The Gromov-Witten invariants of $Y_1$ along multiples $E$
are precisely the level $-1$ invariants with the point class 
$\omega$ replaced by the insertion $-[E]$.  

For level $0$, consider a fiber class $F$ along one of the rulings 
of the surface 
$$Y_2 = \mathbf{P}^1 \times\mathbf{P}^1.$$  
We consider Gromov-Witten invariants of $Y_2$ along
multiples of $F$. If we include a point insertion $\tau_{0}(p)$, then an obstruction bundle computation shows
that level $0$ invariants are the same as Gromov-Witten invariants of $Y_2$ with 
$\tau_{0}(p)$ added and with $\omega$ replaced by fiber classes $[C]$ in the other ruling.  
The Virasoro conjecture for $Y_1$ and $Y_2$ has been proven
by Givental \cite{givental}.  If we specialize to our situation, we obtain the following rules for how to remove $\tau_{a+1}(1)$
insertions for degree 1 theories at level $0$ and level $-1$.

In our formulas, we require the function
$$F(c_{1},\dots,c_{N})=\int_{\mathcal{M}_{h,N}}\lambda_h\lambda_{h-1}\psi_{1}^{c_{1}}\dots\psi_{N}^{c_{N}}$$
where the genus $h$ is determined by dimension constraints.  This can be evaluated in terms of Bernoulli numbers
but that is not necessary for our purposes. 

\textbf{Level $0$:}
\nopagebreak
\begin{align*}
\langle \tau_{a+1}(1)&\prod_{i=1}^{r}\tau_{a_{i}}(1)\rangle^{(0)}_{g,1}\cdot \left[\frac{1}{2}\right]^{a}_{0}
=\\
&\left( -2\left[\frac{1}{2}\right]^{a}_{1}+(2a+2)\left[\frac{1}{2}\right]^{a}_{0}\right)
\langle \tau_{a}(\omega) \prod_{i=1}^{n} \tau_{a_{i}}(1)\rangle^{(0)}_{g,1}
\\&+\sum_{i=1}^{r} \left[a_{i}-\frac{1}{2}\right]^{a}_{0}
\langle \tau_{a_{i}+a}(1) \prod_{j \neq i} \tau_{a_{j}}(1)\rangle^{(0)}_{g,1}\\
&+\left(2\left[a_{i}-\frac{1}{2}\right]^{a}_{1}+\left[a_{i}-\frac{1}{2}\right]^{a}_{0}-\left[a_{i}+\frac{1}{2}\right]^{a}_{0}\right)
\langle \tau_{a_{i}+a-1}(\omega) \prod_{j \neq i} \tau_{a_{j}}(1)\rangle^{(0)}_{g,1}\\
&+ \sum_{S\sqcup T = [r]} \sum_{m} (-1)^{m+1}\left[-m-\frac{1}{2}\right]^{a}_{0}\cdot2\cdot F(m, a_{i}\in S)\cdot
 \langle \tau_{a-m-1}(\omega) \prod_{j\in T} \tau_{a_{j}}(1)\rangle^{(0)}_{h,1}
\end{align*}
\textbf{Level $-1$:}
\nopagebreak
\begin{align*}
\langle \tau_{a+1}(1)&\prod_{i=1}^{r}\tau_{a_{i}}(1) \prod_{j=1}^{s} \tau_{b_{j}}(\omega)
\rangle^{(-1)}_{g,1}\cdot \left[\frac{1}{2}\right]^{a}_{0}
=\\ 
&\left[\frac{1}{2}\right]^{a}_{1}
\langle \tau_{a}(\omega) \prod_{i=1}^{r} \tau_{a_{i}}(1)\prod_{j=1}^{s} \tau_{b_{j}}(\omega)\rangle^{(-1)}_{g,1}\\
&+\sum_{i=1}^{r} \left[a_{i}-\frac{1}{2}\right]^{a}_{0}
\langle \tau_{a_{i}+a}(1) \prod_{k \neq i} \tau_{a_{k}}(1)\prod_{j=1}^{m} \tau_{b_{j}}(\omega)\rangle^{(-1)}_{g,1}\\
&-\left[a_{i}-\frac{1}{2}\right]^{a}_{1}
\langle \tau_{a_{i}+a-1}(\omega) \prod_{k \neq i} \tau_{a_{j}}(1)\prod_{j=1}^{m} \tau_{b_{j}}(\omega)\rangle^{(-1)}_{g,1}\\
&+\sum_{j=1}^{s}
\left[b_{j}+\frac{1}{2}\right]^{a}_{0}
\langle \tau_{b_{j}+a}(\omega) \prod_{i} \tau_{a_{i}}(1)\prod_{k\neq j} \tau_{b_{k}}(\omega)\rangle^{(-1)}_{g,1}\\
&+ \sum_{S\sqcup T = [r]} \sum_{m} (-1)^{m+1}\left[-m-\frac{1}{2}\right]^{a}_{0}\cdot F(m, a_{i}\in S)\cdot
 \langle \tau_{a-m-1}(\omega) \prod_{k\in T} \tau_{a_{k}}(1)\prod_{j=1}^{s} \tau_{b_{j}}(\omega)\rangle^{(-1)}_{h,1}\\
&+\frac{1}{2}\sum_{m}(-1)^{m}\left[-m-\frac{1}{2}\right]^{a}_{0}
\langle \tau_{m}(\omega)\tau_{a-m-1}(\omega) \prod_{i=1}^{r} \tau_{a_{i}}(1) \prod_{j=1}^{s} \tau_{b_{j}}(\omega)
\rangle^{(-1)}_{g-1,1}
\end{align*}

The next step is to obtain removal rules for the relative Gromov-Witten theory of $(\mathbf{P}^1, 0)$ at levels $0$ and $-1$.  Again, in
general there are no known Virasoro constraints for relative Gromov-Witten invariants.  We only derive them here for the case we need, namely degree $1$,
using the degeneration formula.  The key feature is that, since we are in degree $1$, 
there is only one possible relative condition, so we only sum over distributions of non-stationary 
insertions to the two possible components.

First, for level 0 relative invariants, we can degenerate the level 0 absolute theory into two copies of the level 0 relative theory. For example,
when there is a single insertion, it is easy to see that 
$$ \langle \tau_{a+1}(1) \rangle ^{(0)}_{g,1} = 2 \langle \tau_{a+1}(1) \rangle^{(0),\mathrm{rel}}_{g,1}.$$
The Virasoro rule for the level $0$ absolute theory implies the following rule for the level $0$ relative theory.

\textbf{Level $0$, relative:}
\nopagebreak
\begin{align*}
\langle \tau_{a+1}(1) \prod_{i=1}^{r} &\tau_{a_{i}}(1)\rangle^{(0),\mathrm{rel}}_{g,1}\cdot 2\cdot\left[\frac{1}{2}\right]^{a}_{0}
=\\
&\left( -2\left[\frac{1}{2}\right]^{a}_{1}+(2a+2)\left[\frac{1}{2}\right]^{a}_{0}\right)
\langle \tau_{a}(\omega) \prod_{i=1}^{r} \tau_{a_{i}}(1)\rangle^{(0),\mathrm{rel}}_{g,1} \\
&+\sum_{i=1}^{r} \left[a_{i}-\frac{1}{2}\right]^{a}_{0}
\langle \tau_{a_{i}+a}(1) \prod_{j \neq i} \tau_{a_{j}}(1)\rangle^{(0),\mathrm{rel}}_{g,1} \\
&+\left(2\left[a_{i}-\frac{1}{2}\right]^{a}_{1}+\left[a_{i}-\frac{1}{2}\right]^{a}_{0}-\left[a_{i}+\frac{1}{2}\right]^{a}_{0}\right)
\langle \tau_{a_{i}+a-1}(\omega) \prod_{j \neq i} \tau_{a_{j}}(1)\rangle^{(0),\mathrm{rel}}_{g,1}\\
&+ \sum_{S\sqcup T = [r]} \sum_{m} (-1)^{m+1}\left[-m-\frac{1}{2}\right]^{a}_{0}\cdot 2\cdot F(m, a_{i}|i\in S)\cdot
 \langle \tau_{a-m-1}(\omega) \prod_{j\in T} \tau_{a_{j}}(1)\rangle^{(0),\mathrm{rel}}_{h,1}
\end{align*}

We have only written the case where all insertions are nonstationary, because that is all that is needed for our purposes.

For the level $-1$ relative theory, we can degenerate 
the level $-1$ absolute theory into the level $-1$ relative theory and the level $0$ relative theory.  As we sum over distributions of marked points, if the $\tau_a(1)$ insertion is assigned to the level 0 component, then we already have determined how to remove it.  As an example, we see that
$$\langle\tau_{a+1}(1)\rangle^{(-1),\mathrm{rel}}_{g,1} = \langle\tau_{a+1}(1)\rangle^{(-1)}_{g,1}
- \langle \tau_{a+1}(1) \rangle^{(0),\mathrm{rel}}_{g,1},$$
which implies a removal rule for the left-hand side.
Again, using the Virasoro rule for level $-1$ invariant and the level $0$ relative invariants we obtain the following rule.

\textbf{Level $-1$, relative:}
\nopagebreak
\begin{align*}
\langle \tau_{a+1}(1) \prod_{i=1}^{r} &\tau_{a_{i}}(1) \prod_{j=1}^{s} \tau_{b_{j}}(\omega)
\rangle^{(-1),\mathrm{rel}}_{g,1}\cdot 2 \cdot \left[\frac{1}{2}\right]^{k}_{0}
=\\
&-(2a+2)
\langle \tau_{a}(\omega) \prod_{i=1}^{r} \tau_{a_{i}}(1)\prod_{j=1}^{s} \tau_{b_{j}}(\omega)\rangle^{(-1),\mathrm{rel}}_{g,1}\\
&+\sum_{i=1}^{r} 2 \cdot \left[a_{i}-\frac{1}{2}\right]^{a}_{0}
\langle \tau_{a_{i}+a}(1) \prod_{k \neq i} \tau_{a_{k}}(1)\prod_{j=1}^{s} \tau_{b_{j}}(\omega)\rangle^{(-1),\mathrm{rel}}_{g,1}\\
&+\left(\left[a_{i}+\frac{1}{2}\right]^{a}_{0}-\left[a_{i}-\frac{1}{2}\right]^{a}_{0}\right)
\langle \tau_{a_{i}+a-1}(\omega) \prod_{k \neq i} \tau_{a_{k}}(1)\rangle^{(-1),\mathrm{rel}}_{g,1}
\\ &+\sum_{j=1}^{s}
2\cdot \left[b_{j}+\frac{1}{2}\right]^{a}_{0}
\langle \tau_{b_{j}+a}(\omega) \prod_{i} \tau_{a_{i}}(1)\prod_{k\neq j} \tau_{b_{k}}(\omega)\rangle^{(-1),\mathrm{rel}}_{g,1}\\
&+ \sum_{m}(-1)^{m}\left[-m-\frac{1}{2}\right]^{a}_{0}
\langle \tau_{m}(\omega)\tau_{a-m-1}(\omega) \prod_{i=1}^{r} \tau_{a_{i}}(1) \prod_{j=1}^{s} \tau_{b_{j}}(\omega)
\rangle^{(-1),\mathrm{rel}}_{g-1,1}
\end{align*}

Finally, the level $-2$ absolute theory - our main objective - can be degenerated into two copies of the level -1 relative theory.  Applying the level -1 relative Virasoro constraint to each side completes the proof.  Fortunately, the function $F$ cancels in the process.
\end{proof}

This concludes the proof of the proposition and also theorem \ref{mainthm}.

\subsection{Generalization to $D,E$ resolutions}\label{dandesection}

We explain here how Theorem \ref{mainthm} can be extended to resolutions
$S_{\Gamma}$ of rational surface singularities associated to root lattices $\Gamma$ of type
$D$ and $E$.  The argument we use here was suggested to us by Jim Bryan, motivated by
a similar argument from \cite{bryan-katz-leung}.  As these singularities are not toric,
there is only a $\mathbb{C}^{*}$-action on $S_{\Gamma}$ and localization techniques are not effective.

The main construction here is to study the versal deformation space of the singularity associated to $\Gamma$ and, via Brieskorn, to study the simultaneous resolution of the universal family.
Let $X_{0} \rightarrow \Delta$ be a smooth family of surfaces over the disk $\Delta$, obtained from a map
from $\Delta$ to the versal deformation space of $S_{\Gamma}$.
While the family is topologically trivial, its fiber over the origin is the resolved surface $S_{\Gamma}$ but all other fibers are given by affine surfaces; in particular, all compact curves on $X_{0}$ lie over the origin.  Again, there is an identification $H_{2}(S_{\Gamma},\mathbb{Z}) = H_{2}(X_{0},\mathbb{Z})=\Gamma$.
This family admits a deformation $X_{z} \rightarrow \Delta$ so that for $z \ne 0$, there are a finite number of non-affine fibers
each isomorphic to $\mathcal{A}_{1}$.  These non-affine fibers are in bijection with positive roots $\alpha$ of $\Gamma$, and the smooth rational curve lies in the corresponding curve class $\alpha$.

An effective curve on $X_{0}$ must be contained in $S_{\Gamma}$ and an effective curve on $X_{z}$ must be contained in one of the copies of $\mathcal{A}_{1}$.  The key observation is that, for
noncontracted curve classes $\beta$, the reduced virtual class on $S_{\Gamma}$
is identical to the relative virtual class of the family $X_{0}$ over $\Delta$"
$$[\Mbar_{g}(S_{\Gamma},\beta)]^{\mathrm{red}} = [\Mbar_{g}(X_{0}/\Delta, \beta)]^{\mathrm{vir}}.$$
The proof of this comparison can be found in \cite{k3paper}.  Similarly, for $X_{z}$,
we have
$$[\Mbar_{g}(\mathcal{A}_{1},\beta)]^{\mathrm{red}} =  [\Mbar_{g}(X_{z}/\Delta, \beta)]^{\mathrm{vir}},$$
where $\beta$ is a multiple of a root curve class and $\mathcal{A}_{1}$ is the corresponding non-affine fiber.  Deformation invariance of the relative virtual class implies that only root curve classes contribute
to $S_{\Gamma}$ and, in that case, the calculation is given by the case of $\mathcal{A}_{1}$.
The result is the following generalization:

\begin{theorem}\label{mainthmde}
 For curve classes of the form $\beta = d \alpha$ and divisors $\omega_{l} \in H^{2}(S_{\Gamma},\mathbb{Q})$, we  have
 \begin{align*}
 \langle \prod_{k=1}^{r} \tau_{a_{k}}(1) &\prod_{l=1}^{s} \tau_{b_{l}}(\omega_{l}) 
 \rangle^{S_{\Gamma}, \mathrm{red}}_{g, d \alpha} =\nonumber\\ 
& \frac{(2g+r+s-3)!}{(2g+s-3)!}
 d^{2g+s-3} 
 \prod_{k=1}^r \frac{(a_k-1)!}{(2a_k-1)!}\left(-\frac{1}{2}
\right)^{a_k-1}\label{mainform}\\
&\cdot\prod_{l=1}^s \frac{b_l!}{(2b_l+1)!}\left(-\frac{1}{2}
\right)^{b_l} (\alpha \cdot \omega_l).\nonumber 
\end{align*}
If $\beta$ is not a multiple of $\alpha$ for any root $\alpha$, then all reduced invariants vanish.
\end{theorem}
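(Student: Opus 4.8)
The plan is to deduce the $D,E$ evaluation entirely from the $\mathcal{A}_{1}$ case of Theorem \ref{mainthm}, using the two virtual-class comparisons recorded above together with deformation invariance. First I would fix a noncontracted class $\beta$ and invoke
\[
[\Mbar_{g}(S_{\Gamma},\beta)]^{\mathrm{red}} = [\Mbar_{g}(X_{0}/\Delta,\beta)]^{\mathrm{vir}}
\]
(proved in \cite{k3paper}), which rewrites every reduced invariant of $S_{\Gamma}$ as a relative invariant of the family $X_{0}\to\Delta$. Since the family is topologically trivial, the divisor classes $\omega_{l}\in H^{2}(S_{\Gamma},\QQ)=H^{2}(X_{0},\QQ)$ and the descendent classes $\psi_{k}$ all extend over $\Delta$, so the insertions carry over verbatim; the $\psi_{k}$ in particular are pulled back from the Artin stack of domain curves and are insensitive to the deformation.

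Next I would apply deformation invariance of the relative virtual class over $\Delta$ to replace $X_{0}$ by the nearby total space $X_{z}$, leaving the relative invariants unchanged. Over $X_{z}$ every effective curve lies in one of the finitely many non-affine fibers, each a copy of $\mathcal{A}_{1}$ indexed by a positive root, whose rational curve sits in the corresponding root class. Hence $\Mbar_{g}(X_{z}/\Delta,\beta)$ is a disjoint union over those fibers, and for $\beta=d\alpha$ only the single fiber attached to $\alpha$ contributes (distinct positive roots in a reduced $ADE$ system are never positive multiples of one another). If $\beta$ is not a multiple of any root, no fiber carries a curve in class $\beta$, the moduli space is empty, and the invariant vanishes, which gives the last assertion at once.

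On the contributing fiber I would run the comparison in reverse,
\[
[\Mbar_{g}(\mathcal{A}_{1},\beta)]^{\mathrm{red}} = [\Mbar_{g}(X_{z}/\Delta,\beta)]^{\mathrm{vir}},
\]
so the surviving term is exactly a reduced $\mathcal{A}_{1}$ invariant, now evaluable by Theorem \ref{mainthm}. The one point needing care is the restriction of insertions to this fiber: a divisor class $\omega_{l}$ restricts to $(\alpha\cdot\omega_{l})$ times the point class $\omega$ on the $\mathcal{A}_{1}$, which is precisely what produces the factors $\prod_{l}(\alpha\cdot\omega_{l})$ in the stated formula, while each $\tau_{a_{k}}(1)$ restricts to $\tau_{a_{k}}(1)$ unchanged. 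Assembling these, the $D,E$ invariant equals the degree-$d$, class-$\alpha$ reduced $\mathcal{A}_{1}$ invariant, and Theorem \ref{mainthm} supplies the closed formula, including the combinatorial prefactor and the $d^{2g+s-3}$ scaling.

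The main obstacle is the reduced-to-relative comparison of virtual classes itself, namely the identity $[\Mbar_{g}(S_{\Gamma},\beta)]^{\mathrm{red}}=[\Mbar_{g}(X_{0}/\Delta,\beta)]^{\mathrm{vir}}$ and its $\mathcal{A}_{1}$ analogue: checking that the reduction of the obstruction theory along the symplectic/semiregularity direction matches the relative obstruction theory over $\Delta$ is the genuinely geometric input, and because these singularities are non-toric the localization shortcuts of the earlier sections are unavailable. I would defer this step to \cite{k3paper} and treat the remaining pieces—deformation invariance and the divisor restriction—as formal consequences once that comparison is in hand.
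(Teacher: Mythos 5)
Your proposal is correct and takes essentially the same route as the paper's own proof in section \ref{dandesection}: the same Brieskorn/versal-deformation family $X_{0}\rightarrow\Delta$ and its deformation $X_{z}$, the same two reduced-to-relative virtual class comparisons deferred to \cite{k3paper}, and the same use of deformation invariance to isolate the $\mathcal{A}_{1}$ fibers indexed by positive roots. Your explicit accounting of how the insertions restrict (each $\omega_{l}$ becoming $(\alpha\cdot\omega_{l})\,\omega$ on the contributing fiber) is a detail the paper leaves implicit, but it is the same argument.
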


\section{Nonrigid $\An\times \mathbf{P}^1$}\label{rubberan}

In this section, we begin to study the full $T$-equivariant theory of the threefold 
$\An\times\mathbf{P}^1$.  The main evaluation of this section involves
invariants associated to a nonrigid target.  We also explain how this geometry 
determines the Gromov-Witten theory of $\mathbf{P}^{1}$.

\subsection{Definitions}

For a curve class $\beta \in H_2(\An, \mathbb{Z})$ and an integer $m\geq0$, we fix two cohomology-weighted partitions $\overrightarrow{\mu}, \overrightarrow{\nu}$ and consider the rubber moduli space
$$\Mbar^{\sim}_{g}(\An \times \mathbf{P}^1, (\beta,m); \mu,  \nu)$$
defined as follows.
This moduli space parametrizes stable maps to a nonrigid target $\An \times \mathbf{P}^1$; that is, two maps are equivalent if they differ by the natural $\mathbb{C}^{\ast}$-scaling action on the $\mathbf{P}^1$
factor.  As before, we require the stable maps to be transverse to the fibers over $0$ and $\infty$, with ramification profiles given by $\mu$ and $\nu$, and to have finite automorphism group with respect to this revised version of equivalence.  In this section, we will be working with \textit{connected} domains and explain how to pass to the disconnected case afterwards.

Rubber invariants of $\An \times \mathbf{P}^1$ are again defined by pulling back cohomology classes via the evaluation maps to the relative divisors and integrating them against the virtual fundamental class.  Because of the $\mathbb{C}^{\ast}$-scaling, the virtual dimension is one less than that of
the usual moduli space of relative stable maps:
$$-1 +2m + (-K_{\An}\cdot\beta) +(l(\mu)- m) + (l(\nu)-m) = l(\mu)+l(\nu) -1.$$

We give an evaluation of the series
\begin{align*}
\langle \overrightarrow{\mu} | \overrightarrow{\nu} &\rangle^{\An, \sim}_{\beta}
 = \sum_{g \geq 0} \langle \overrightarrow{\mu} | \overrightarrow{\nu} \rangle^{\An,\sim}_{g, (\beta,m)}u^{2g}
 \end{align*}
 for nonzero $\beta$.
 For the sake of simplicity, we focus on the case of $\mathcal{A}_{1}$ and explain how to handle the general situation afterwards.

 \subsection{Evaluation for $\mathcal{A}_{1}$}
 
 For the surface $\mathcal{A}_{1}$, the parts of a cohomology-weighted partition are labelled by either $1$ or $\omega$; given a partition $\mu$, we use the notation $\mu(\omega)$ to denote the cohomology-weighted partition where each part is labelled with $\omega$.  We then have the following proposition
 \begin{prop}\label{rubberprop}
For $\beta = d[E]$, we have
$$
\langle \mu(\omega) | \nu(\omega) \rangle^{\mathcal{A}_{1},\sim}_{\beta} 
 = 
 \frac{(t_1+t_2)d^{l(\mu)+l(\nu)-3}}{|\mathrm{Aut}(\mu)|\cdot |\mathrm{Aut}(\nu)|} \frac{\prod \mathcal{S}(d\mu_i u) 
 \prod \mathcal{S}(d\nu_j u)}{\mathcal{S}(du)^2}$$
 where
 $$\mathcal{S}(u) = \frac{sin(u/2)}{u/2}.$$
 If any of parts are labelled by $1$, then the rubber invariant vanishes.
  \end{prop}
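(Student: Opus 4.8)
The plan is to compute the rubber invariants by $T$-localization on the $\mathcal{A}_{1}$ factor, reducing everything to Hodge integrals whose genus generating functions assemble into the claimed ratio of $\mathcal{S}$-functions. First I would dispose of the vanishing claim: if some part of $\mu$ or $\nu$ is labelled by $1$ rather than $\omega$, then by Lemma \ref{reducedlemma} every $T$-equivariant invariant of $\mathcal{A}_{1}$ in a nonzero class is divisible by $(t_1+t_2)$, while the unreduced equivariant integral with a $1$-label has strictly lower degree in the torus weights. Exactly as in the proof of Proposition \ref{degdep}, replacing an $\omega = \frac{-1}{2}[E]$ insertion by $1$ removes a divisor constraint and forces the surviving coefficient to vanish for dimension reasons. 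This leaves the case where all relative parts are weighted by $\omega$.

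Next I would set up the localization. By Lemma \ref{reducedlemma} it suffices to extract the coefficient of $(t_1+t_2)$ in the full $T$-equivariant rubber theory, and by Corollary \ref{reducedcor2} the $\mathcal{A}_{1}$ directions enter through the obstruction bundle $R\pi_{\ast}\mathrm{ev}^{\ast}\mathcal{O}(-2)$ over maps to the zero section $E$. Localizing with respect to $T$, the fixed loci consist of curves contracted over the fixed points $p_1, p_2$ joined by covers of $E$, now fibered over the rubber target in the $\mathbf{P}^1$ direction with relative profiles $\mu, \nu$. As in Proposition \ref{degdep}, the edge factors each carry $(t_1+t_2)$ and the denominators are coprime to $(t_1+t_2)$, so modulo $(t_1+t_2)^2$ only the simplest configurations survive: a single degree-$d$ cover of $E$ with two contracted vertices over $p_1$ and $p_2$. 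Using Mumford's relation $\Lambda^{\vee}(2t_1)\Lambda^{\vee}(t_2-t_1)\equiv (-1)^{g_i}(2t_1)^{2g_i} \bmod (t_1+t_2)$ as before, the vertex contributions reduce to the $\lambda_g\lambda_{g-1}$-type Hodge integrals over $\Mbar_{g,n}$ already encountered in Proposition \ref{statprop}.

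The final step is to sum these Hodge integrals in genus. Each contracted vertex over $p_1$ or $p_2$, carrying the degree-$d$ cotangent-line edge, should contribute a genus generating series equal to $1/\mathcal{S}(du)$ by the Faber--Pandharipande evaluation used in Proposition \ref{statprop}; the two vertices produce the denominator $\mathcal{S}(du)^2$. Each relative ramification part, sitting over a $d$-fold cover of $E$ with $\mathbf{P}^1$-ramification $\mu_i$ (respectively $\nu_j$), contributes a factor $\mathcal{S}(d\mu_i u)$ (respectively $\mathcal{S}(d\nu_j u)$); this is the standard double Hurwitz genus expansion, and it is precisely here that the promised new derivation of the stationary $\mathbf{P}^1$ theory enters. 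The remaining $(t_1+t_2)$ comes from the single surviving edge, the power $d^{l(\mu)+l(\nu)-3}$ is the degree scaling of the cotangent-line denominators exactly as in Proposition \ref{degdep}, and the factors $|\mathrm{Aut}(\mu)|^{-1}|\mathrm{Aut}(\nu)|^{-1}$ are the unordered-partition normalization.

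I expect the main obstacle to be this genus-summation in the last step: matching the localized rubber Hodge integrals with the double Hurwitz generating function and verifying that the ramification contributions assemble into $\prod \mathcal{S}(d\mu_i u)\prod \mathcal{S}(d\nu_j u)/\mathcal{S}(du)^2$ with the correct arguments $d\mu_i$ and the exact denominator $\mathcal{S}(du)^2$. Controlling the combinatorics of how the $\mathcal{A}_{1}$-degree $d$ couples to the $\mathbf{P}^1$-ramification, and confirming that no higher-edge localization graphs contribute to the linear coefficient, is where the bulk of the technical work lies.
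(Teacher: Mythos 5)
Your overall strategy (localize on the $\mathcal{A}_1$ torus, extract the linear term in $t_1+t_2$, cancel Hodge classes by Mumford's relation, feed in Faber--Pandharipande and stationary $\mathbf{P}^1$ series) is the right flavor, but there are two genuine gaps. First, the reduction to a single degree dependence $d^{l(\mu)+l(\nu)-3}$ cannot be obtained "exactly as in Proposition \ref{degdep}": that argument is for the surface $\mathcal{A}_1$, whereas here the target is the rubber threefold $\mathcal{A}_1\times\mathbf{P}^1$, where the $T$-fixed loci involve expanded degenerations of the target and components of mixed degree in the $E$ and $\mathbf{P}^1$ directions; this is exactly why the paper avoids direct localization for $d>1$ and instead proves a separate degree-scaling lemma by tracking the quantity $d^{2g+s-3}$ through the rigidification, degeneration, and localization moves of the reconstruction algorithm of \cite{TVGW}, reducing everything to the already-proved surface case. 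Your proposal has no substitute for this lemma, and a direct localization at general $d$ is precisely the computation one cannot control.

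Second, your description of the $d=1$ fixed loci misplaces the relative data. The profiles $\mu,\nu$ are ramification conditions over $\mathcal{A}_1\times\{0\}$ and $\mathcal{A}_1\times\{\infty\}$, i.e.\ in the $\mathbf{P}^1$ direction; they are not carried by "ramification parts sitting over a $d$-fold cover of $E$." After replacing the $\omega$-labels by $E_0$, one must argue (this is the key missing step) that the unique component not contracted by the projection to $\mathcal{A}_1$ is the graph of a $T$-fixed map $f:E\to\mathbf{P}^1$, hence $z\mapsto z^k$, and that $k=0$ is forced because all relative insertions sit over $p_1$; so this component maps into a single fiber. The resulting fixed locus is $\Mbar_{g_1}(\mathbf{P}^1/0,\infty;\mu,\nu)\times\Mbar_{g_2,1}$ and is \emph{asymmetric}: the vertex over $p_1$ maps with degree $m$ to $p_1\times\mathbf{P}^1$, carries all of $\mu,\nu$, and contributes the full stationary bracket $\langle\mu|\tau_{2g-2+l(\mu)+l(\nu)}(\omega)|\nu\rangle^{\mathbf{P}^1}_{g,m}$, i.e.\ $\prod\mathcal{S}(\mu_iu)\prod\mathcal{S}(\nu_ju)/\mathcal{S}(u)$ up to automorphisms, while the vertex over $p_2$ is a fully contracted curve contributing $1/\mathcal{S}(u)$ via $\int_{\Mbar_{g,1}}\lambda_g\psi^{2g-2}$; your picture of two symmetric vertices each giving $1/\mathcal{S}(du)$, with the $\mathcal{S}(\mu_i u)$ factors coming from covers of $E$, does not match this geometry and would not assemble correctly. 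Finally, note that within this proof the paper quotes the known Okounkov--Pandharipande evaluation (\ref{P1formula}); the advertised "new derivation" of the stationary theory of $\mathbf{P}^1$ is a separate corollary obtained afterwards by re-running the same localization with $E_0$ insertions at $\mu$ and $E_2$ insertions at $\nu$ and comparing the two answers, so invoking the double Hurwitz expansion inside this proof, as you do, either imports the result you are meant to recover or requires that comparison argument, which your proposal does not supply.
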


We first explain the vanishing statement.  Since $d >0$ and we consider connected domain,
the moduli space is compact so the rubber invariant lies in $\mathbb{Q}[t_1,t_2]$.  Moreover, it must
be divisible by $(t_1+t_2)$, either by arguing via the reduced theory or by expressing the invariant in terms of the $T$-equivariant theory
of the $\mathcal{A}_{1}$-surface as in the next lemma.  Therefore, if nonzero, its cohomological degree is at least $1$.  Since
we have an insertion of degree at most $1$ at each insertion, the maximum possible degree
of the rubber invariant is
$$l(\mu) +l(\nu) - (2m +l(\mu) -m + l(\nu) - m - 1) = 1$$
and equality is achieved if and only if each insertion is labelled by $\omega$.

\subsection{Degree scaling}

We next show that
the degree dependence on $\beta = d[E]$ behaves exactly as in the surface case.   The point is that,
although we are working with the full equivariant theory, only linear terms show up in our calculation.

We will prove the following more general claim.  Consider
any genus $g$ Gromov-Witten invariant on $\mathcal{A}_{1} \times \mathbf{P}^1$, either absolute, relative to one of the divisors
 $A_{1}\times {0}$ and $A_{1}\times{\infty}$, or relative to both divisors.  In the latter case, we allow either
rubber or non-rubber invariants.  Moreover, assume the dimensions of our insertions are such that the 
invariant is linear in $t_1,t_2$, which then forces it to be proportional to $(t_1+t_2)$.  A cohomology class at an insertion is called 
stationary if it is either $\omega$ at a relative marked point
or $\omega$ or $\iota_{\ast}\omega$ at a non-relative marked point, where $\iota:\mathcal{A}_{1}\hookrightarrow\mathcal{A}_{1}\times\mathbf{P}^{1}$ is the inclusion of a fiber.

\begin{lemma}
Given a Gromov-Witten invariant on $\mathcal{A}_{1}\times \mathbf{P}^1$ of the type just discussed, if $s$ is the total number of stationary insertions, then
the invariant is proportional to $d^{2g+s-3}$ as a function of $d$.
\end{lemma}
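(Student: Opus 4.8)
The plan is to mirror the localization argument of Proposition \ref{degdep}, now carried out on the threefold with respect to the torus $T=(\mathbb{C}^{\ast})^{2}$ acting on the $\mathcal{A}_{1}$ factor alone. By hypothesis the invariant is linear in $t_1,t_2$, hence (as noted just before the statement) a multiple of $(t_1+t_2)$, so it suffices to compute its coefficient of $(t_1+t_2)$, i.e.\ to work modulo $(t_1+t_2)^{2}$. First I would describe the $T$-fixed loci. Since the $T$-fixed locus of $\mathcal{A}_{1}\times\mathbf{P}^1$ is $\{p_1,p_2\}\times\mathbf{P}^1$, a fixed stable map decomposes into \emph{vertex} contributions, which are absolute, relative or rubber stable maps into $\{p_i\}\times\mathbf{P}^1\cong\mathbf{P}^1$ carrying the entire $\mathbf{P}^1$-degree, the genus, the relative/rubber structure and all stationary insertions, together with \emph{edge} contributions, which are totally ramified covers of $E$ mapping into $E\times\{pt\}$ and carrying the $E$-degree, their degrees summing to $d$. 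A single edge forces the two vertices over $p_1$ and $p_2$ to meet the same $\mathbf{P}^1$-fiber.

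The second step is to show each edge contributes a factor divisible by $(t_1+t_2)$. In the $\mathcal{A}_{1}$-directions this is the identical computation to Proposition \ref{degdep}: a degree-$a$ edge contributes the product of $H^{1}(\mathbf{P}^1,\mathcal{O}(-2a))$-weights, which includes the weight $(t_1+t_2)$. The only new feature is the $\mathbf{P}^1$-normal direction, which is $T$-trivial and is absorbed into the node-smoothing and relative/rubber structure rather than producing an extra factor of $(t_1+t_2)$; in particular all weights appearing in denominators are of the form $it_1+jt_2$ with $ij\le 0$ and hence nonzero modulo $(t_1+t_2)$. Consequently, modulo $(t_1+t_2)^{2}$ only graphs with a single edge survive, exactly as in the surface case.

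It then remains to extract the $d$-dependence of a single-edge graph. The edge carries an overall factor $1/d$. At each node where the edge meets a vertex over $p_i$, the cotangent-line factor $\frac{1}{\pm(t_2-t_1)/d-\psi}$ expands as $\sum_{k}d^{k+1}(t_2-t_1)^{-k-1}\psi^{k}$, contributing $d^{k_i+1}$ once the $\psi^{k_i}$ term is selected by the dimension constraint on the vertex moduli. The obstruction contribution from the $\mathcal{A}_{1}$-normal directions at $p_i$ is $\Lambda^{\vee}(2t_1)\Lambda^{\vee}(t_2-t_1)$, whose two weights sum to $t_1+t_2$; by Mumford's relation it reduces modulo $(t_1+t_2)$ to the scalar $(-1)^{g_i}(2t_1)^{2g_i}$, soaking up no $\Mbar$-dimension, exactly as in Proposition \ref{degdep}. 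The total exponent is therefore $-1+(k_1+1)+(k_2+1)=k_1+k_2+1$, and the goal of a dimension count on the product of vertex moduli is to show this equals $2g+s-3$.

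The main obstacle is precisely this last dimension count: I must verify that $k_1+k_2+1=2g+s-3$ uniformly, independent of how the genus and the $\mathbf{P}^1$-degree are split between the two vertices, and uniformly across the absolute, one-sided relative, two-sided relative and rubber cases. The key point is that the extra virtual dimension a vertex acquires from carrying $\mathbf{P}^1$-degree is exactly cancelled by the incidence conditions imposed by the stationary insertions assigned to it, namely the fiber point classes on the $\mathbf{P}^1$ and the $\omega$-weighted relative and $\iota_{\ast}\omega$ fiber markings, so that the $\mathbf{P}^1$-degree drops out of the exponent and only $g$ and $s$ remain. I would track the relative and rubber virtual-dimension shifts through the rigidification, confirming that each case produces the same total; once this bookkeeping is in place the scaling $d^{2g+s-3}$ follows.
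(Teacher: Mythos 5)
Your overall strategy --- direct $T$-localization on the threefold, generalizing Proposition \ref{degdep} --- is not the paper's route: the paper instead runs the reduction algorithm of \cite{TVGW}, checking that each of its three moves (rigidification, degeneration, localization) preserves the quantity $d^{2g+s-3}$ up to negligible terms, so that everything terminates in the surface scaling already proven. Your proposal stalls exactly where the content of the lemma lives. The step you yourself call ``the main obstacle,'' the uniform count $k_1+k_2+1=2g+s-3$ across the absolute, relative and rubber cases, is deferred rather than proved, and the mechanism you offer for it is not correct as stated. Stationary insertions pulled back from the surface (the class $\omega$, and $\omega$-weighted relative conditions) restrict on a fixed locus to equivariant \emph{scalars}: they impose no incidence condition on the vertex $\mathbf{P}^1$'s at all, yet each still shifts the exponent by $+1$, because they soak up no dimension and therefore raise the node $\psi$-power, hence the power of $d$, by one. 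Conversely, the $\mathbf{P}^1$-degree $m$ does not cancel against anything locally: in the fixed-locus dimension it survives as $2g+2m+\cdots$, and it is only the global linearity hypothesis --- the requirement that total insertion degree exceed the virtual dimension by exactly one, which ties the number of fiber-type stationary insertions ($\iota_{\ast}\omega$ and its relatives) to $m$ --- that converts the exponent $2g+2m-2$ into $2g+s-3$. Without routing the bookkeeping explicitly through this constraint the count cannot close, and with it the verification must still be done separately in each relative and rubber variant.

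Second, your description of the fixed loci is too coarse in precisely the cases for which the lemma is needed. For invariants relative to $\mathcal{A}_{1}\times\{0\}$ or $\{\infty\}$, and for rubber invariants, the $T$-fixed loci include maps to \emph{expanded degenerations} $\mathcal{A}_{1}\times\mathbf{P}^1[k]$: the unique edge may sit over any bubble, the relative conditions get distributed among vertex components, the vertex factors are themselves relative or rubber integrals over expanded $\mathbf{P}^1$'s carrying target-smoothing $\Psi$-classes, and in the rubber case one must additionally rigidify the $\mathbb{C}^{\ast}$-scaling (compare the fixed-locus analyses in the proofs of Proposition \ref{rubberprop} and in section \ref{stationarysection}, which the paper carries out only at $d=1$). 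Each of these configurations must be shown to produce the same exponent, and none is covered by the two-vertex--one-edge picture you set up. This is exactly the complexity the paper's argument is designed to avoid: its three moves let the relative and rubber cases inherit the scaling from the already-established surface case without ever classifying such fixed loci. As written, then, the proposal is a plausible program rather than a proof.
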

\begin{proof}
We can show this using the machinery of \cite{TVGW}.  In that
paper, a systematic procedure is given for reducing Gromov-Witten invariants on
$X \times \mathbf{P}^1$ to the Gromov-Witten theory of $X$.  To establish the
degree dependence, we will show it is preserved by each step of the algorithm.  More precisely,
each step of the algorithm is given by a certain relation among invariants
on $\mathcal{A}_{1}$ and $\mathcal{A}_{1}\times\mathbf{P}^{1}$, each of which can be viewed
as a function of $d$.  Although both the genus and number of stationary terms will vary among elements of the relation, we will show that, up to terms we can ignore, the quantity $d^{2g+s-3}$ is fixed among all terms in the relation.  In particular, if every term but one is proportional to $d^{2g+s-3}$, then this implies
that the remaining term is also proportional to $d^{2g+s-3}$. The endpoint of
the algorithm is the Gromov-Witten theory of $\mathcal{A}_{1}$ where we have already proven the 
correct $d$-dependence.

There are three moves involved in the reconstruction result.
\begin{itemize}
\item \textit{Rigidification:}

In this step, nonrigid invariants are expressed in
terms of rigid relative GW-invariants.  In order to do this, we add a
$\tau_0(\omega)$ insertion using the divisor equation and fix it in the $\mathbf{P}^1$ direction
to obtain a non-rubber relative invariant:
\begin{align*}
d \langle \overrightarrow{\mu}| \gamma| \overrightarrow{\nu}\rangle^{\sim}_{g,d}
&= \langle \overrightarrow{\mu}| \gamma\cdot \tau_{0}(\omega)| \overrightarrow{\nu}\rangle^{\sim}_{g,d}\\
&=\langle \overrightarrow{\mu}|\gamma\cdot \tau_{0}(\iota_{\ast}\omega)|\overrightarrow{\nu}\rangle_{g,d}.
\end{align*}
We pick up a factor of $d$ from the divisor equation and increase the number of stationary insertions by $1$, leaving the genus unchanged.  Therefore, $d^{2g+s-3}$ is fixed.

\item \textit{Degeneration:}

In this step, we have either a relative or absolute invariant and degenerate the $\mathbf{P}^1$-bundle
into two components along the $\mathbf{P}^1$ direction.  A typical relation obtained in this way has the schematic form:
$$
\langle \gamma \rangle_{g,d} = \sum_{\Gamma} \langle \gamma_{1}|\overrightarrow{\mu}\rangle_{\Gamma_{1}}\langle \overrightarrow{\mu^{\vee}}| \gamma_{2}\rangle_{\Gamma_{2}},
$$
where $\Gamma$ denotes the combinatorial configurations
of the degeneration of the domain into connected components.  The degeneration formula
includes a sum over partitions $\mu$, giving relative conditions for each irreducible component of the degeneration, along with a sum over Poincare-dual classes at each relative point.
In our case, this sum over Poincare-dual classes involves one of the following splittings:
$$(1, 2t_1t_2), (\omega, -2\omega).$$

Since we are only interested in invariants proportional
to $(t_1+t_2)$, we can extract the linear part of this relation and ignore any terms that are divisible by $(t_1+t_2)^2$ or which have degree
at least 2 as a rational function in $t_1,t_2$.  With this in mind, the only allowed
degeneration configurations have the following structure.  There is
only one component $C$ which maps nontrivially to $\mathcal{A}_{1}$ since
every such component contributes a factor of $(t_1+t_2)$.    Since the contribution
of this component to the degeneration formula has degree at least $1$, if we remove this primary
component, the remaining components components contribute degree $0$.  The term associated to each combinatorial configuration can be factored into connected components
$$I^{\mathrm{primary}}_{d}\cdot \prod I^{\mathrm{contracted}}_{0},$$
so that only the contribution of $C$ has a nontrivial $d$-dependence.
The degree of the primary term is at least $1$, since it is compactly supported and divisible by $t_1+t_2$.  The degree of each connected contracted terms is at least $0$, with equality if and only if the degree
of all insertions on the connected component equals $2$.  Moreover, any connected component of degree $0$ over $\mathcal{A}_{1}$ must have genus $0$.  Otherwise, there is a contribution of $c_{1}(\mathcal{A}_{1}) = (t_1+t_2)$ to the obstruction bundle.  This restricts
the possibilities as follows.

First, we can have a tree of rational curves connected to $C$ at a single node, with at most one 
stationary marked point from the original insertions $\gamma$.  If there are more than one stationary marked points on the tree, the contribution of the tree will be degree $\geq 1$.  If it contains no stationary marked point, then the Poincare splitting condition at the node must be 
$$(1, 2t_1t_2).$$  
If it contains one stationary marked point from $\gamma$, then the 
Poincare splitting condition at the node is forced to be 
$$(\omega, -2\omega),$$
so $C$ has a new relative stationary insertion.
In either case, genus and the number of stationary insertions on $C$ are unchanged.

The second case is to allow a tree of rational curves connected to $C$ at two nodes.  In this case,
the tree cannot contain any stationary insertions for degree reasons, and the Poincare splitting at each node must be $(\omega, -2\omega)$.  Therefore the genus of the main component has decreased by $1$ but the number of stationary insertions has increased by $2$ so again the $d$-dependence is preserved.

\item \textit{Localization}
The third move in the reconstruction result is virtual localization in the relative and absolute setting.
As in the case of the degeneration step, there is a sum of combinatorial configurations which can be
separated into a primary component with nontrivial $d$-dependence and contracted terms.
A similar argument to the previous one shows that the number of stationary insertions on the
primary component is preserved here as well.  This concludes the proof.
\end{itemize}
\end{proof}

\subsection{Degree $1$ Evaluation}

We now finish the proof of proposition \ref{rubberprop}.  By the previous lemma, we
can assume $d=1$.
\begin{proof} 
 As in proposition \ref{degdep}, we can replace relative insertions 
 labelled with $\omega$with labels of $E_{0}$.
We then apply virtual localization with respect
to the $T$-action on $\mathcal{A}_{1}$.  Because of the $\mathbb{C}^{\ast}$-scaling,
fixed loci on the rubber moduli space can typically be quite complicated to describe.
However, because we consider $d=1$, the situation is much simpler.  

Suppose we have a curve
mapping to $\mathcal{A}_{1}\times\mathbf{P}^1$ that is fixed under
the $T$-action after a possible rescaling in the $\mathbf{P}^1$ direction.
If an irreducible component is contracted under the projection to $\mathcal{A}_{1}$, then it can be an arbitrary curve
mapping to either ${p_1} \times \mathbf{P}^1$ or ${p_2}\times \mathbf{P}^1$.  
Since $d=1$, there is exactly one irreducible component that is
not contracted and it must map isomorphically to $E \subset \mathcal{A}_{1}$.  We can
view this map as the graph of a morphism 
$$f:E \rightarrow \mathbf{P}^1$$ 
defined up to scalar and
identify $E$ with $\mathbf{P}^1$ so that $p_1$ and $p_2$ are identified with $0$ and $\infty$.  Under
these identifications, the condition that our component is $T$-fixed implies that the morphism $f$
is of the form 
$$z \mapsto z^k$$ 
for some integer $k$.  If $k >0$ then
$f$ intersects  $\mathcal{A}_{1}\times {0}$ at $p_1 \times 0$ with multiplicity $k$
and intersects $\mathcal{A}_{1} \times \infty$ at $p_2 \times \infty$ with multiplicity $k$.  We have the same
analysis if $k <0$ with $p_1$ and $p_2$ reversed.  If $k=0$, then this component does not intersect either relative divisor.

Our relative insertions are all at $p_1$, so the only possible choice for this non-contracted component
is $k=0$.  Moreover the remaining components can only be non-contracted over $p_1$ since
otherwise they would intersect the relative divisors over $p_2$.  As a result, the only allowed fixed
loci have the following structure.  For $g_1+g_2=g$, we have a curve of genus $g_1$ that maps
to $p_0 \times \mathbf{P}^1$ with degree $m$ and a curve of genus $g_2$ that is contracted
over $p_1 \times \mathbf{P}^1$ that are connected by
a rational curve mapping to a fiber of the projection to $\mathbf{P}^1$.  We
can rigidify the $\mathbb{C}^{\ast}$-scaling by requiring the connecting edge to map to a 
fixed point of $\mathbf{P}^1$.  As a result, the fixed locus just described is 
$$\Mbar_{g_1}(\mathbf{P}^{1}/0,\infty;\mu, \nu)\times \Mbar_{g_2,1}.$$

\begin{figure*}[htp]
\centering
\psfrag{a}{$\mathcal{A}_{1}$}
\psfrag{p}{$\mathbf{P}^1$}
\psfrag{m}{$\mu$}
\psfrag{n}{$\nu$}
\includegraphics[scale=0.50]{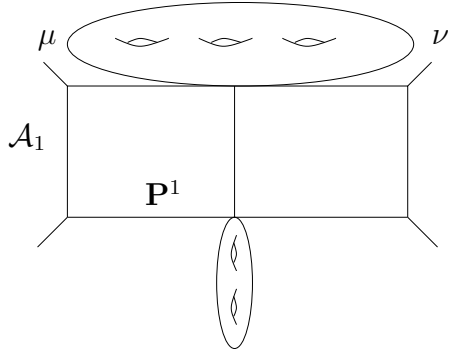}
\caption{Rubber localization with $E_{0}$ insertions}
\end{figure*}

The analysis here is very similar to the localization analysis of proposition \ref{degdep}.
The edge term contributes a factor of $(t_1+t_2)$ so the remaining terms can be calculated
modulo $(t_1+t_2)$.  The contribution from the first factor of the fixed locus is
$$\langle \mu | \Lambda^{\vee}(2t_1) \Lambda^{\vee}(t_2-t_1) \frac{\omega}{(t_2-t_1) - \psi}|\nu\rangle^{\mathbf{P}^1}_{g_1,m},$$ which
is
$$ (-1)^g\langle \mu | \tau_{2g-2 + l(\mu)+l(\nu)}(\omega) | \nu\rangle^{\mathbf{P}^1}_{g,m}$$
modulo $(t_1+t_2)$ since the Hodge classes will cancel by Mumford's relation.

This latter expression is the main calculation in the stationary theory of $\mathbf{P}^1$, consisting
of those Gromov-Witten invariants with only descendents of $\omega$.  These invariants
have been computed in \cite{okpanp1}.  In terms of the trigonometric function
$$\mathcal{S}(u) = \frac{sin(u/2)}{u/2},$$ 
the generating function encoding the stationary theory of $\mathbf{P}^1$ is
\begin{equation}
\sum_{g \geq 0} (-1)^g\langle \mu | \tau_{r}(\omega) | \nu\rangle^{\mathbf{P}^1}_{g,m}u^{2g} = \frac{1}{|\mathrm{Aut}(\mu)||\mathrm{Aut}(\nu)|}\frac{\prod \mathcal{S} (\mu_i u) \prod \mathcal{S}(\nu_j u)}{\mathcal{S}(u)}.\label{P1formula}
\end{equation}
In the above generating function, the index $r$ of the insertion is fixed by the dimension constraint
to be $2g-2 + l(\mu)+l(\nu)$.

Similarly, the contribution of the second factor of the fixed locus is the coefficient of $u^{2g_2}$ in
$$\sum_{g \geq 0} (-1)^g u^{2g}\int_{\Mbar_{g,1}} \lambda_g \psi^{2g-2} = \frac{1}{\mathcal{S}(u)}.$$
This evaluation has been computed in \cite{faber-pandharipande}.

Combining the two generating functions gives the answer.
\end{proof}

\subsection{Stationary theory of $\mathbf{P}^1$}\label{stationarysection}

The rubber evaluation was derived using the stationary theory of $\mathbf{P}^1$ in (\ref{P1formula}).  
However, another choice of insertions in our evaluation gives an answer in terms of certain
double Hurwitz numbers.  As these double Hurwitz numbers are simple to calculate directly, 
this gives a new derivation of the stationary theory of $\mathbf{P}^1$.  From that specific expression, it is possible to derive the stationary theory of target curves of arbitrary genus $h$ by degeneration to a nodal configuration of rational curves.  In 
particular, the stationary theory of $\mathbf{P}^1$ directly yields the Gromov-Witten/Hurwitz correspondence of \cite{okpanp1}.  In \cite{okpanp2}, the original derivation requires understanding the full equivariant theory of $\mathbf{P}^1$.  While that approach is more involved than this one, it is of course a much stronger result.

Given two partitions $\rho,\lambda$ of $m$, let 
$$H^{g}_{\rho,\lambda}$$ 
denote the number of disconnected genus $g$ covers of $\mathbf{P}^1$ with a
branch point of ramification profile $\rho$, a branch point of ramification profile $\lambda$,
and simple ramification everywhere else.

Consider the double Hurwitz series
$$H_{\rho,\lambda}(u) = \sum_{g} \frac{m^{1-r}}{r!} H^{g}_{\rho,\lambda} u^{2g},$$
where $r= 2g-2+l(\lambda)+l(\rho)$ and $m = |\rho|=|\lambda|$.
By comparing two evaluations of the rubber $\mathcal{A}_{1}$ theory, we have the following
proposition for the stationary theory of $\mathbf{P}^1$.

\begin{prop}
\begin{align*}
\sum_{g \geq 0} (-1)^g\langle \mu | \tau_{2g-2 + l(\mu)+l(\nu)}(\omega) | \nu\rangle^{\mathbf{P}^1}_{g,m}u^{2g}=
\frac{1}{|\mathrm{Aut}(\mu)||\mathrm{Aut}(\nu)|}H_{\mu,(m)}(u)\cdot H_{\nu,(m)}(u)\cdot \mathcal{S}(u).
\end{align*}
\end{prop}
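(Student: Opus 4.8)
The plan is to compute the single rubber invariant $\langle \mu(\omega)\,|\,\nu(\omega)\rangle^{\mathcal{A}_{1},\sim}_{[E]}$ in two different ways and to read off the stationary series by comparing them. The flexibility comes from the relation $\omega = E_{0}+t_{2}=E_{2}+t_{1}$ together with the fact (used already in the proofs of Propositions \ref{degdep} and \ref{rubberprop}) that a rubber invariant with $\omega$-insertions is linear in $t_{1},t_{2}$ and divisible by $(t_{1}+t_{2})$, so that each individual $\omega$ may be replaced by $E_{0}$ or by $E_{2}$ without changing the value. The two evaluations differ precisely in how we distribute these replacements over the two relative divisors.

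The first evaluation is the one already carried out in the proof of Proposition \ref{rubberprop}, but now I would leave the stationary $\mathbf{P}^{1}$ series undetermined instead of substituting (\ref{P1formula}). Replacing every $\omega$ by $E_{0}$ forces all relative markings to $p_{1}$; in the $d=1$ localization the unique non-contracted edge collapses to a fiber direction, and the fixed loci are $\Mbar_{g_{1}}(\mathbf{P}^{1}/0,\infty;\mu,\nu)\times\Mbar_{g_{2},1}$. After Mumford's relation cancels the Hodge factors, the $p_{1}$-vertex contributes exactly the series on the left-hand side of the proposition and the $p_{2}$-vertex contributes the Faber--Pandharipande series $1/\mathcal{S}(u)$ of \cite{faber-pandharipande}. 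Thus, up to the prefactor $(t_{1}+t_{2})/(|\mathrm{Aut}(\mu)|\,|\mathrm{Aut}(\nu)|)$, the rubber invariant equals this stationary series times $1/\mathcal{S}(u)$.

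For the second evaluation I would instead replace $\omega$ by $E_{0}$ over $0$ and by $E_{2}$ over $\infty$, so that $\mu$ is forced to $p_{1}$ and $\nu$ to $p_{2}$. Then no component lying over $p_{1}$ may meet $\mathcal{A}_{1}\times\infty$ and none over $p_{2}$ may meet $\mathcal{A}_{1}\times 0$; since the $\mathcal{A}_{1}$-degree is $1$, the unique non-contracted edge must run from $p_{1}\times 0$ to $p_{2}\times\infty$ as the totally ramified graph $z\mapsto z^{m}$, carrying the one-part profile $(m)$ at each end. Matching these ends to the relative data $\mu$ and $\nu$ forces the target to expand over $0$ and over $\infty$, and the rubber integrals over the resulting bubbles --- covers carrying profile $(m)$ on the side attached to the edge and $\mu$ (respectively $\nu$) on the relative side, with the intermediate simple branching supplied by the moduli and the Hodge classes removed by Mumford's relation --- are exactly the double Hurwitz series $H_{\mu,(m)}(u)$ and $H_{\nu,(m)}(u)$. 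In contrast to the first evaluation, there is now no contracted vertex producing a Faber--Pandharipande factor: the genus is absorbed entirely into the two bubbles, and the rigid edge together with the automorphism weights supplies only the prefactor $(t_{1}+t_{2})/(|\mathrm{Aut}(\mu)|\,|\mathrm{Aut}(\nu)|)$.

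Finally I would equate the two expressions for the rubber invariant. The common factor $(t_{1}+t_{2})/(|\mathrm{Aut}(\mu)|\,|\mathrm{Aut}(\nu)|)$ cancels, and carrying the $1/\mathcal{S}(u)$ of the first evaluation to the other side yields
$$\sum_{g\geq 0}(-1)^{g}\langle\mu\,|\,\tau_{2g-2+l(\mu)+l(\nu)}(\omega)\,|\,\nu\rangle^{\mathbf{P}^{1}}_{g,m}u^{2g} = \frac{1}{|\mathrm{Aut}(\mu)|\,|\mathrm{Aut}(\nu)|}H_{\mu,(m)}(u)\,H_{\nu,(m)}(u)\,\mathcal{S}(u),$$
as claimed; evaluating the one-part series $H_{\bullet,(m)}(u)$ by an elementary cover count then recovers (\ref{P1formula}) and furnishes the promised new derivation of the stationary theory, independent of \cite{okpanp1,okpanp2}. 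The main obstacle is the second evaluation: beyond repeating the single-edge weight analysis of Proposition \ref{rubberprop} with the insertions now split between $p_{1}$ and $p_{2}$, one must identify the bubble rubber integrals with the double Hurwitz series in exactly the normalization $m^{1-r}/r!$ --- matching the powers of $u$ and the sign $(-1)^{g}$ --- and, crucially, confirm that the leftover edge contribution collapses to the bare factor $(t_{1}+t_{2})$ with no residual $\mathcal{S}$, unlike the Faber--Pandharipande factor that appears in the first evaluation.
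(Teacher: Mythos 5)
Your proposal is correct and takes essentially the same route as the paper: the paper likewise computes the single degree-$1$ rubber invariant $\langle \mu(\omega)\,|\,\nu(\omega)\rangle^{\mathcal{A}_{1},\sim}_{1}$ twice, once with all insertions replaced by $E_{0}$ (recovering the stationary series times the Faber--Pandharipande factor $1/\mathcal{S}(u)$) and once with $E_{0}$ over $0$ and $E_{2}$ over $\infty$ (forcing the totally ramified edge $z\mapsto z^{m}$ and a three-piece degenerate target), and then equates the two. The ``main obstacle'' you flag --- identifying the bubble rubber integrals, with their target $\psi$-classes, with the double Hurwitz series in the normalization $m^{1-r}/r!$ --- is precisely the paper's Lemma \ref{rubberhurwitz}, which it treats as well known (citing \cite{llz}, \cite{okpanp1}) and justifies by showing both sides satisfy the same cut-and-join differential equation.
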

\begin{proof}
As before, we calculate the degree $1$ rubber invariant
$$\langle \mu(\omega)| \nu(\omega) \rangle^{\mathcal{A}_{1},\sim}_{1},$$
where $\mu$ and $\nu$ are partitions of $m$.
However, we now replace the insertions at $\mu$ with $E_0$ and the insertions at $\nu$ with $E_2$ 
before applying virtual localization.  The fixed loci can be analyzed as before.  With our new choice of insertions, the intersection with the relative divisor $\mathcal{A}_{1} \times 0$ is entirely over $p_0$
and the inersection with $\mathcal{A}_{1} \times \infty$ is entirely over $p_1$.  As a consequence,
the curve component that is not contracted by the projection to $\mathcal{A}_{1}$ must correspond
to the graph of 
$$f:z \mapsto z^m$$ 
in our previous notation.  

Up to genus distribution, there is
a unique configuration that allows this.  The target degenerates into three pieces.
In the first piece, we have a genus $g_1$ curve mapping to $p_1 \times \mathbf{P}^1$
with ramification $\mu$ over $0$ and ramification $(m)$ over $\infty$.  In the central piece, we have 
the rational curve that is not contracted by the projection to $\mathcal{A}_{1}$; its ramification profile is $(m)$ 
over each relative divisor.  Finally, in the third piece, we have a genus $g_2$ curve mapping
to $p_2 \times \mathbf{P}^1$ with ramification $(m)$ over $0$ and ramification $\nu$ over
$\infty$. Stable maps to each piece are still defined only up to a $\mathbb{C}^{\ast}$-scaling.

\begin{figure*}[htp]
\centering
\psfrag{a}{$\mathcal{A}_{1}$}
\psfrag{p}{$\mathbf{P}^1$}
\psfrag{m}{$\mu$}
\psfrag{n}{$\nu$}
\includegraphics[scale=0.60]{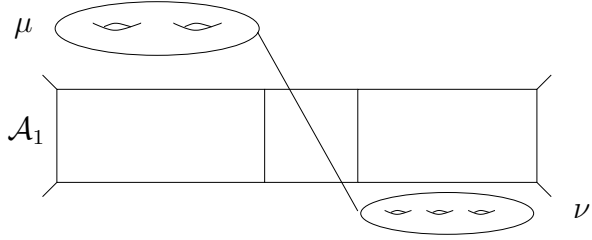}
\caption{Rubber localization with $E_{0}$ and $E_{2}$ insertions}
\end{figure*}

As this fixed locus features a degenerate target, there are now cotangent lines $\psi_{0},\psi_{\infty}$ in the virtual
normal bundle that correspond to smoothing the target.  See \cite{TVGW} for a careful description
of these and how to remove them.  As before, we work mod $(t_1+t_2)^2$; let $\tau = (t_2-t_1)$
then the contribution is given by

$$(t_1+t_2)\cdot \langle \mu| \frac{1}{m\tau - \psi_\infty}|(m)\rangle^{\mathbf{P}^1,\sim}_{g_1}\cdot m^2
\cdot \langle(m) | \frac{1}{-m\tau - \psi_0}|\nu\rangle^{\mathbf{P}^1,\sim}_{g_2}.$$
As always, the Hodge classes cancel by Mumford's relation. 

The following lemma is well-known (see \cite{llz} and \cite{okpanp1} for example); we sketch a brief
justification below.

\begin{lemma}\label{rubberhurwitz}
$$\langle \rho| \frac{1}{1- \psi_\infty}|\lambda \rangle^{\mathbf{P}^{1},\sim}_{g}= \frac{1}{|\mathrm{Aut}(\rho)
\mathrm{Aut}(\lambda)|}\frac{H^{g}_{\rho,\lambda}}{(2g-2+l(\lambda)+l(\rho))!}$$
\end{lemma}
The expression in the factorial is the number of simple ramification points.
\begin{proof}
If we sum both sides over genus and view the result as operators on the space of partitions, both sides satisfy a differential equation of the form
$$u\frac{\partial}{\partial u} \mathsf{S} = \mathsf{M}\mathsf{S},$$
where $\mathsf{M}$ is the cut-and-join operator in Hurwitz theory.  On the left-hand side, this
follows by rigidifying the rubber geometry with a dilaton insertion $\tau_{1}(1)$ and removing
the cotangent lines $\psi_\infty$ with topological recursion relations.  On the right-hand side, this
is follows from picking a simple ramification point and degenerating it onto a separate component.  Since
the lowest-order terms match, this forces the entire series to agree.
\end{proof}

If we compare this expression in Lemma \ref{rubberhurwitz} with the original choice of insertions from the last section, we immediately
have the proposition.
\end{proof}

When one of the partitions is totally ramified, i.e. $\lambda = (m)$, then
double Hurwitz numbers can be simply evaluated using the character theory 
of $S_m$.  This computation has been performed in \cite{GJV}.  

\begin{prop}
$$H(\rho, (m))(u) = \frac{\prod_{i=1}^{l(\rho)}\mathcal{S}(\rho_{i}u)}{\mathcal{S}(u)}$$
\end{prop}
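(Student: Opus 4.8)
The plan is to follow the character-theoretic computation of \cite{GJV}. The starting point is the Frobenius--Burnside formula, which expresses the disconnected double Hurwitz number as a sum over irreducible representations of the symmetric group $S_m$. Writing $\tau\in C_2$ for a transposition and $r=2g-1+l(\rho)$ for the number of simple branch points, the formula reads, up to the explicit degree-$r$ normalization constants,
$$H^{g}_{\rho,(m)} \;\propto\; \sum_{\nu\vdash m}\chi^{\nu}(\rho)\,\chi^{\nu}((m))\left(\frac{|C_2|\,\chi^{\nu}(\tau)}{\dim\chi^{\nu}}\right)^{r}.$$
The decisive simplification comes from the totally ramified insertion $(m)$: by the Murnaghan--Nakayama rule, $\chi^{\nu}((m))$ vanishes unless $\nu$ is a hook $\nu_a=(m-a,1^{a})$, in which case $\chi^{\nu_a}((m))=(-1)^{a}$. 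Thus the sum over all partitions collapses to a sum over the $m$ hook representations indexed by $a=0,\dots,m-1$.

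First I would record the data attached to each hook: the dimension is $\dim\chi^{\nu_a}=\binom{m-1}{a}$, and the central character on the class of transpositions equals the content sum $\sum_{\square\in\nu_a}c(\square)$, which a direct evaluation of the arm and leg contents reduces to the clean linear expression
$$\frac{|C_2|\,\chi^{\nu_a}(\tau)}{\dim\chi^{\nu_a}} = \frac{m(m-1-2a)}{2}.$$
Assembling the genus generating function, this quantity is raised to the $r$-th power and weighted by $1/r!$, so summing over $g$ produces an exponential in the content variable. The structural point is that, setting $z=e^{u/2}$ and $j=m-1-2a$, the hook contents run symmetrically over $\{m-1,m-3,\dots,-(m-1)\}$; the reflection $a\leftrightarrow m-1-a$ sends $\nu_a$ to its conjugate and $j$ to $-j$. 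This symmetry forces the wrong-parity terms and the low-order terms of the naive series to cancel, so the parity-restricted, truncated Hurwitz sum over $g$ coincides with the full exponential sum $\sum_{a}(-1)^{a}\chi^{\nu_a}(\rho)\,z^{\,j}$.

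The crux of the argument is therefore the hook-character factorization identity
$$\sum_{a=0}^{m-1}(-1)^{a}\,\chi^{(m-a,1^{a})}(\rho)\,z^{\,m-1-2a} = \frac{\prod_{i=1}^{l(\rho)}\left(z^{\rho_i}-z^{-\rho_i}\right)}{z-z^{-1}}.$$
I would prove this by Murnaghan--Nakayama ribbon recursion and induction on $l(\rho)$: removing a border strip of length $\rho_1$ from a hook again yields a hook, and the two ways of doing so --- from the arm or from the leg, carrying the height sign --- package precisely into the factor $(z^{\rho_1}-z^{-\rho_1})$ while reducing $m\mapsto m-\rho_1$. The base cases $\rho=(m)$ and $\rho=(1^{m})$ are immediate checks. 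Substituting $z=e^{u/2}$ converts the right-hand side into $\prod_i 2\sinh(\rho_i u/2)\big/\,2\sinh(u/2)$, which after dividing by $\prod_i\rho_i$ and the automorphism normalization is exactly $\prod_i\mathcal{S}(\rho_i u)\big/\mathcal{S}(u)$; the trigonometric rather than hyperbolic form of $\mathcal{S}$ is fixed by the same $(-1)^{g}$ sign convention used for the stationary $\mathbf{P}^1$ series in (\ref{P1formula}). The main obstacle is establishing the factorization identity together with the careful bookkeeping of the normalization constants and signs; once these are in hand, the proposition follows by direct substitution.
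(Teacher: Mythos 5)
Your proposal is correct and takes the same route as the paper, whose entire proof of this proposition is a citation to \cite{GJV} for precisely this character-theoretic evaluation: Frobenius--Burnside formula, collapse to hook representations forced by the $m$-cycle, the content sum $m(m-1-2a)/2$, and the hook generating identity. One small imprecision: the vanishing of the low-order terms (those with $r<l(\rho)-1$ of the correct parity) is not forced by the conjugation symmetry $a\leftrightarrow m-1-a$, which only kills the wrong-parity terms, but rather by the order of vanishing of your factorization identity at $z=1$; since you prove that identity, the argument still closes, as does your bookkeeping of the $1/|\mathrm{Aut}(\rho)|$ and $\sinh$-versus-$\sin$ conventions, which the paper itself glosses over.
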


If we combine these two propositions, we obtain a new proof of equation (\ref{P1formula}).  

\subsection{Extension to $\An$}

The nonrigid theory of $\An\times\mathbf{P}^1$ reduces to the case of $\mathcal{A}_1\times \mathbf{P}^1$ by reducing to the surface calculations in a manner identical to the proof of degree scaling.
 Given two cohomology-weighted partitions $\overrightarrow{\mu},\overrightarrow{\nu}$ labelled with cohomology classes $\gamma_1, \dots,\gamma_{l(\mu)}$ and $\eta_1, \dots,
\eta_{l(\nu)}$ respectively.  

\begin{prop}\label{rubberpropan}
If $\beta = d\alpha$ for a root curve class $\alpha$ and each label $\gamma_{i}, \eta_{j}$ is
a divisor, we have
$$
\langle \overrightarrow{\mu} | \overrightarrow{\nu}\rangle^{\An}_{\beta} 
 = 
 \frac{(t_1+t_2)d^{l(\mu)+l(\nu)-3}}{|\mathrm{Aut}(\mu)|\cdot |\mathrm{Aut}(\nu)|} \frac{\prod_{i} (\alpha\cdot\gamma_{i})\mathcal{S}(d\mu_i u) 
 \prod_{j}(\alpha\cdot\eta_{j}) \mathcal{S}(d\nu_j u)}{\mathcal{S}(du)^2}$$
 where
 $$\mathcal{S}(u) = \frac{sin(u/2)}{u/2}.$$
 Otherwise, the series vanishes.
\end{prop}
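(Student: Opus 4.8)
The plan is to run virtual localization with respect to the $T$-action on the $\An$ factor, exactly as in the proof of the $\An$-dependence proposition for the surface, and to extract only the part of the answer linear in $(t_1+t_2)$. First I would recall, as recorded just before the statement, that each rubber invariant is divisible by $(t_1+t_2)$ and that the maximal cohomological degree of a nonvanishing contribution is $1$; hence it suffices to compute the localization sum modulo $(t_1+t_2)^2$. In the localization graphs, every edge mapping to an exceptional curve contributes a factor of $(t_1+t_2)$ (from the weights of $H^1(\mathbf{P}^1,\mathcal{O}(-2a))$, just as in the proof of Proposition \ref{degdep}), while each node at which two edges of equal degree meet at distinct tangent directions contributes a compensating factor of $(t_1+t_2)$ in the denominator of the virtual normal bundle. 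The multiplicity of $(t_1+t_2)$ in a graph is therefore at least the number of edges minus the number of such nodes, which is always positive; the graphs of multiplicity exactly $1$ are precisely the chain configurations in which a degree-$d$ chain of rational curves covers $E_i,\dots,E_{j-1}$, joined to contracted components over the endpoint fixed points. If $\beta$ is not a multiple of a root there are no such graphs and the series vanishes, whereas if $\beta = d\alpha_{ij}$ only these chains survive.

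Second, I would transplant the divisor bookkeeping of the $\An$-dependence proposition without change. Reducing to $\alpha = \alpha_{1,n+1}$, every relative and non-relative marked point must lie over $p_1$ or $p_{n+1}$, so any divisor label orthogonal to $\alpha$ forces vanishing, consistent with the vanishing of $\alpha\cdot\gamma_i$, while the surviving labels $E_1$ and $E_n$ (with $E_1\cdot\alpha = E_n\cdot\alpha = -1$) restrict to $\mp E_0$ and $\mp E_2$ on the $\mathcal{A}_1$ endpoint geometry and may be replaced by $\omega$. This is exactly what produces the scalar prefactors $\prod_i(\alpha\cdot\gamma_i)$ and $\prod_j(\alpha\cdot\eta_j)$ in the desired formula, together with the final vanishing clause.

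Third, after this reduction the localization contribution of each surviving chain is identical to the $\mathcal{A}_1\times\mathbf{P}^1$ rubber contribution analyzed in the proof of Proposition \ref{rubberprop}, with the single noncontracted edge now covering the chain with degree $d$ and the ramification profiles correspondingly scaled; the degree-scaling lemma of this section supplies the $d^{l(\mu)+l(\nu)-3}$ prefactor. Substituting the $\mathcal{A}_1$ rubber evaluation of Proposition \ref{rubberprop} then yields the stated closed form, with numerator $\prod_i\mathcal{S}(d\mu_i u)\prod_j\mathcal{S}(d\nu_j u)$ and denominator $\mathcal{S}(du)^2$. The main obstacle is the combinatorial bookkeeping of these last two steps: one must verify that the restriction of the divisor labels along the chain assembles into precisely the product of intersection numbers $\alpha\cdot\gamma_i$ and $\alpha\cdot\eta_j$, and that the automorphism prefactors $1/(|\mathrm{Aut}(\mu)||\mathrm{Aut}(\nu)|)$ and the Poincar\'e-dual splittings at the relative divisors are matched correctly between the $\An$ and $\mathcal{A}_1$ computations. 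Since each of these is a direct analogue of the corresponding step in the surface argument, I expect no genuinely new input beyond Proposition \ref{rubberprop} and the degree-scaling lemma.
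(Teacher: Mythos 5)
Your route is genuinely different from the paper's. The paper disposes of Proposition \ref{rubberpropan} in one line: the nonrigid theory of $\An\times\mathbf{P}^1$ is reduced to $\mathcal{A}_1\times\mathbf{P}^1$ ``in a manner identical to the proof of degree scaling,'' i.e.\ by running the rigidification/degeneration/localization algorithm of \cite{TVGW} until one lands on the surface theory of $\An$, where the root-dependence, the factors $\alpha\cdot\gamma_i$, and the vanishing statements are already contained in Theorem \ref{mainthm}. You instead propose a direct $T$-localization on the rubber moduli space of $\An\times\mathbf{P}^1$, transplanting the chain analysis and the $(t_1+t_2)$-multiplicity count from the surface $\An$-dependence proposition. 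That is a legitimate alternative in outline, and it has the virtue of making the appearance of the chains $E_i,\dots,E_{j-1}$ and the intersection numbers geometrically visible; the paper's route is cheaper because it never has to describe rubber fixed loci for $\An$ at all.

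There is, however, one concrete gap you should close. You invoke ``the degree-scaling lemma of this section'' to supply the $d^{l(\mu)+l(\nu)-3}$ prefactor, but that lemma is stated and proved only for $\mathcal{A}_1\times\mathbf{P}^1$; its proof terminates in the surface theory of $\mathcal{A}_1$, where the $d$-dependence is known. To use it for $\An\times\mathbf{P}^1$ you must rerun the same \cite{TVGW} reduction with the $\An$ surface as endpoint --- which is precisely the paper's entire argument, and which already delivers the root-dependence, the $\prod(\alpha\cdot\gamma_i)$ factors, and the vanishing along the way. If instead you refuse that machinery and insist on pure localization, you are forced to analyze rubber fixed loci at arbitrary $d$, which the paper explicitly avoids even for $\mathcal{A}_1$ (the clean description of fixed loci there relies on $d=1$). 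Relatedly, your claim that each surviving chain contributes ``identically'' to the $\mathcal{A}_1$ rubber contribution requires the direct computation that the intermediate curves of the chain, together with the node-smoothing factors $(t_1+t_2)$ at the interior fixed points, contribute exactly $1$ modulo $(t_1+t_2)$; this is the analogue of the unproved-but-asserted ``direct computation'' in the surface $\An$-dependence proof and should be spelled out rather than absorbed into ``bookkeeping.''
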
 
After specializing to $t_{1}=t_{2}$, the same statement holds for $D,E$ resolutions.

\section{Relative invariants of $\An\times\mathbf{P}^1$}\label{relan}

In this section, we study the relative Gromov-Witten theory of $\An\times \mathbf{P}^1$.
The results of the last section allow us to calculate
$$\mathsf{Z}^{\prime}(\An\times\mathbf{P}^1)_{\overrightarrow{\mu},\overrightarrow{\rho},\overrightarrow{\nu}} 
\in \mathbb{Q}(t_1,t_2)((u))[[s_1,\dots,s_n]]$$
for
 $$\overrightarrow{\rho} = \{(2,1),(1,1)^{m-2}\}\quad\mathrm{or}\quad\overrightarrow{\rho}=\{(1,\omega_{i}),(1,1)^{m-1}\}.$$
We will
abbreviate these partitions as $(2)$ and $(1,\omega_{i})$ respectively. 
In these cases, we can establish the equivalence between this theory
and the quantum cohomology of $\mathrm{Hilb}(\An)$ computed in \cite{hilban}, where
these partitions correspond to divisors on the Hilbert scheme.

While we are unable to go further, we state a generation conjecture from that paper and prove that 
it implies an algorithm for calculating the full relative series in terms of
\begin{enumerate}
\item the local theory of $\mathbb{C}^{2}\times \mathbf{P}^1$
\item rubber invariants from the last section
\item degeneration techniques.
\end{enumerate}
We sketch the extension of this algorithm to $\An$-bundles over a higher genus curve.

\subsection{Rigidification}
In this section, we will compute
$$\mathsf{Z}^{\prime}(\An\times \mathbf{P}^1)_{\overrightarrow{\mu}, (2),\overrightarrow{\nu}}$$
and 
$$\mathsf{Z}^{\prime}(\An\times \mathbf{P}^1)_{\overrightarrow{\mu}, (1,\omega_{i}),\overrightarrow{\nu}}.$$
Recall that the generating function is defined by allowing possibly \textit{disconnected} domain curves.
If we fix a configuration of connected domain components, the associated Gromov-Witten invariant
is a product of the associated connected Gromov-Witten invariants.  It thus suffices to 
study the restricted generating function for connected domain curves of genus $g$.  That is, we 
compute the partition functions
$$\mathsf{Z}^{\circ}(\An\times\mathbf{P}^1)_{\overrightarrow{\mu},(2),\overrightarrow{\nu}},
\mathsf{Z}^{\circ}(\An\times\mathbf{P}^1)_{\overrightarrow{\mu}, (1,\omega_{i}),\overrightarrow{\nu}},
\mathsf{Z}^{\circ}(\An\times\mathbf{P}^1)_{\overrightarrow{\mu}, (1),\overrightarrow{\nu}},$$
defined using the moduli spaces
$$\Mbar^{\circ}_{g}(\An \times \mathbf{P}^1, (\beta,m); \mu, \rho, \nu)$$
of relative stable maps of \textit{connected} curves of genus $g$ with the appropriate ramification
profile over the relative divisors.  We split the generating function into the contribution from curve classes
$(0,m)$ and curve classes $(\beta, m)$ with $\beta$ nonzero:
$$\mathsf{Z}^{\circ}_{\beta = 0}(\An\times \mathbf{P}^1)_{\overrightarrow{\mu},\overrightarrow{\rho},\overrightarrow{\nu}} + 
\mathsf{Z}^{\circ}_{\beta\neq 0}(\An\times\mathbf{P}^1)_{\overrightarrow{\mu},\overrightarrow{\rho},\overrightarrow{\nu}}.$$  
\begin{prop}
The generating functions
$$\mathsf{Z}^{\circ}_{\beta=0}(\An\times\mathbf{P}^1)_{\overrightarrow{\mu},\overrightarrow{\rho},\overrightarrow{\nu}}$$
are determined in terms of the theory of $\mathbb{C}^2\times\mathbf{P}^1.$  
\end{prop}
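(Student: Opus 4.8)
The plan is to exploit the fact that a curve of class $(0,m)$ is contracted by the projection to $\An$, so that its image in $\An$ is a single point. As in the $\beta = 0$ discussion following the definition of the relative invariants, the moduli space $\Mbar^{\circ}_{g}(\An \times \mathbf{P}^1,(0,m);\mu,\rho,\nu)$ is noncompact, so the invariants are defined as $T$-equivariant localization residues in the sense of \cite{localcurves}. I would therefore apply the virtual localization formula for the induced $T$-action on $\An \times \mathbf{P}^1$. Since the only $T$-fixed points of $\An$ are the isolated points $p_1,\dots,p_{n+1}$, and the domain is contracted over $\An$, every fixed locus contributing to the residue is supported over a single slice $p_i \times \mathbf{P}^1$.

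The essential geometric input is that, near each $p_i$, the surface $\An$ is $T$-equivariantly isomorphic to $\mathbb{C}^2$ with tangent weights $(n+2-i)t_1 - (i-1)t_2$ and $(i-n-1)t_1 + it_2$, as recorded in the notation subsection. (Note that these sum to $t_1+t_2$, consistent with Lemma \ref{reducedlemma}.) Hence the fixed locus over $p_i$ is exactly the $T$-fixed locus defining the local relative theory of $\mathbb{C}^2 \times \mathbf{P}^1$, and its contribution to the residue is a relative invariant of $\mathbb{C}^2 \times \mathbf{P}^1$ with the two equivariant parameters of the local theory specialized to the tangent weights of $\An$ at $p_i$. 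These are precisely the series evaluated in \cite{localcurves}.

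It remains to track the cohomology weights carried by $\mu,\rho,\nu$. Because each contributing map factors through a single fixed point $p_i$, every insertion $\mathrm{ev}^{\ast}\gamma$ restricts, under the localization map $H^{\ast}_{T}(\An) \to H^{\ast}_{T}(p_i)$, to the scalar $\gamma|_{p_i} \in \QQ(t_1,t_2)$. Replacing each weighted part by its restriction at $p_i$ and summing over the fixed points, I expect to obtain
$$\mathsf{Z}^{\circ}_{\beta=0}(\An\times\mathbf{P}^1)_{\overrightarrow{\mu},\overrightarrow{\rho},\overrightarrow{\nu}} = \sum_{i=1}^{n+1}\Big(\prod \gamma|_{p_i}\Big)\,\mathsf{Z}^{\circ}(\mathbb{C}^2\times\mathbf{P}^1)_{\mu,\rho,\nu}\Big|_{p_i},$$
where the product runs over all cohomology weights of the three partitions and the local series is evaluated with equivariant parameters specialized at $p_i$.

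The main obstacle I anticipate is verifying that the contribution assembled fiberwise at each $p_i$ genuinely reproduces the residue defining the local $\mathbb{C}^2 \times \mathbf{P}^1$ theory. This requires identifying the restriction of the obstruction theory and of the moving part of the virtual normal bundle---including the deformations along the $\mathbf{P}^1$ factor and the relative conditions over $\An \times 0$ and $\An \times \infty$---with those of $\mathbb{C}^2 \times \mathbf{P}^1$. This identification follows formally from the local $T$-equivariant isomorphism $\An \cong \mathbb{C}^2$ near $p_i$, but one must check that the normalizations of the relative divisors and the residue conventions match exactly, so that the finite sum over $i$ lands in $\QQ(t_1,t_2)((u))$ as claimed.
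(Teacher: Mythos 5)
Your proposal is correct and follows essentially the same route as the paper: the paper's proof is a two-line appeal to $T$-localization along the $\An$-direction, noting that since the domain is contracted by the projection to $\An$, each fixed locus sits over a single fixed point $p_i$ and contributes the corresponding integral on $\mathbb{C}^2\times\mathbf{P}^1$. Your additional bookkeeping (the tangent weights at $p_i$ as the equivariant parameters of the local theory, and the restriction of the cohomology weights to scalars at $p_i$) just makes explicit what the paper leaves implicit.
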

\begin{proof}
This follows immediately from $T$-localization along the $\An$-direction.  Since the domain is contracted by the projection to $\An$, the contribution of each fixed locus is given by the associated integral on $\mathbb{C}^2\times\mathbf{P}^1$.
\end{proof}

For the remaining contributions, the case where the relative partition is $(1)^{m}$ is the easiest.
\begin{lem}\label{lemma1}
$$\mathsf{Z}^{\circ}_{\beta\neq 0}(\An\times\mathbf{P}^1)_{\overrightarrow{\mu}, (1), \overrightarrow{\nu}}=0.$$
\end{lem}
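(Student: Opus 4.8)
The plan is to prove vanishing by a dimension count reinforced by the divisibility that governs the reduced theory. First I would record the virtual dimension of
$$\Mbar^{\circ}_{g}(\An\times\mathbf{P}^1,(\beta,m);\mu,(1)^m,\nu).$$
With three relative divisors the formula from the introduction, using $-K_{\An}\cdot\beta=0$ and $l((1)^m)=m$, gives
$$2m+(l(\mu)-m)+(m-m)+(l(\nu)-m)=l(\mu)+l(\nu).$$

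Second, I would bound the total degree of the integrand. Every part of the middle partition $\overrightarrow{\rho}=(1,1)^m$ is weighted by the identity class and so contributes complex degree $0$; the labels on $\overrightarrow{\mu}$ and $\overrightarrow{\nu}$ lie in $H^{\ast}(\An,\mathbb{Q})$, which is spanned by $1$ and the divisor classes $E_i$, so each carries complex degree at most $1$. Hence the total degree of the insertions is at most $l(\mu)+l(\nu)$, which is exactly the virtual dimension. For $\beta\neq0$ the moduli space is compact, so the $T$-equivariant invariant lies in $\mathbb{Q}[t_1,t_2]$, and the dimension shift shows its degree in $(t_1,t_2)$ is at most $0$: it is a constant. (If any label is the identity the degree is strictly negative and the invariant already vanishes on dimension grounds.)

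Third, the essential input is divisibility by $(t_1+t_2)$. For $\beta\neq0$ the holomorphic symplectic form on $\An$ pulls back to $\An\times\mathbf{P}^1$ and produces a trivial weight-$(t_1+t_2)$ factor in the obstruction theory, exactly as in Lemma \ref{reducedlemma}, so every such invariant is divisible by $(t_1+t_2)$. Alternatively, by $T$-localization along the $\An$ factor, any graph contributing to a class with $\beta\neq0$ must contain an edge wrapping some exceptional curve $E_i$, and, as in the proof of Proposition \ref{degdep}, each such edge carries a factor of $(t_1+t_2)$. Combining the two observations, the invariant is a constant that is divisible by $(t_1+t_2)$, hence it is $0$. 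The argument is uniform in $g$ and in $\beta$, so the entire series $\mathsf{Z}^{\circ}_{\beta\neq0}(\An\times\mathbf{P}^1)_{\overrightarrow{\mu},(1),\overrightarrow{\nu}}$ vanishes.

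The hard part will be making the divisibility statement airtight in the relative setting: I would either confirm that the reduced-class construction and the Ran--Manetti semiregularity vanishing extend to relative stable maps into $\An\times\mathbf{P}^1$ (the curves still map into the nondegenerate locus of the pulled-back symplectic form), or else run the localization argument with care, checking that each fixed locus with $\beta\neq0$ genuinely contributes the factor $(t_1+t_2)$ and that no denominator reintroduces it. The dimension bookkeeping in the first two steps is routine once the virtual dimension is fixed.
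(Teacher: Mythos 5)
Your proposal is correct and follows essentially the same route as the paper: the paper's proof is exactly the observation that for $\beta\neq 0$ the invariant is a polynomial in $t_1,t_2$ divisible by $(t_1+t_2)$, while the dimension count (virtual dimension $l(\mu)+l(\nu)$ against insertions of total degree at most $l(\mu)+l(\nu)$) forces it to have degree $0$, hence it vanishes. Your extra care about justifying the divisibility in the relative setting is reasonable but matches what the paper already relies on (the reduced-class discussion of Lemma \ref{reducedlemma} and the edge factors in the localization arguments), so no new idea is needed.
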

\begin{proof}
As $\beta \neq 0$, the invariant is a polynomial in $t_1,t_2$ divisible by $(t_1+t_2)$.  However, from
dimension constraints, the maximum cohomological degree of the invariant occurs when every part of $\mu,\nu$ is labelled with a divisor in which case
this only gives degree $0$.
\end{proof}

For the remaining contribution with $\overrightarrow{\rho} = (2)$ or $\overrightarrow{\rho} = (1,\omega_{i})$, 
we have the following evaluation.
Given two cohomology-weighted partitions $\overrightarrow{\mu},\overrightarrow{\nu}$ labelled with cohomology classes $\gamma_1, \dots,\gamma_{l(\mu)}$ and $ \eta_1, \dots,
\eta_{l(\nu)}$ respectively, let
\begin{align*}
\Theta^{\circ}(\overrightarrow{\mu}, \overrightarrow{\nu})& =\frac{(t_1+t_2)}{|\mathrm{Aut}(
\overrightarrow{\mu})|\cdot |\mathrm{Aut}(\overrightarrow{\nu})|}\cdot\\ 
&\sum_{1\leq i < j\leq n} \sum_{d=1}^{\infty} (du)^{l(\mu)+l(\nu)-2}\frac{
\prod_{k=1}^{l(\mu)}(\alpha_{i,j}\cdot \gamma_{k})\mathcal{S}(d\mu_{k}u)
\prod_{k=1}^{l(\nu)}(\alpha_{i,j}\cdot \eta_{k})\mathcal{S}(d\nu_{k}u)}{d\mathcal{S}(du)^2} 
(s_i\cdot\dots\cdot s_{j-1})^{d}.
\end{align*}
Up to a monomial shift, this is precisely the rubber evaluation from last section.

\begin{prop}\label{relprop}
If $\mu, \nu$ are partitions of $m >0$ and the cohomology classes labelling $\mu, \nu$ are divisors, then we have
$$u^{l(\mu)+l(\nu)-1}\mathsf{Z}^{\circ}_{\beta \neq 0}(\An\times\mathbf{P}^1)_{\overrightarrow{\mu},(2),\overrightarrow{\nu}}
= \frac{d}{du}\Theta^{\circ}(\overrightarrow{\mu},\overrightarrow{\nu})$$
and
$$
u^{l(\mu)+l(\nu)}\mathsf{Z}^{\circ}_{\beta\neq 0}(\An\times\mathbf{P}^1)_{\overrightarrow{\mu},(1,\omega_{k}),\overrightarrow{\nu}}
= s_{k}\frac{d}{ds_k}\Theta^{\circ}(\overrightarrow{\mu},\overrightarrow{\nu}).$$
Otherwise, we have
$$\mathsf{Z}^{\circ}_{\beta \neq 0}(\An\times\mathbf{P}^1)_{\overrightarrow{\mu},(2),\overrightarrow{\nu}}=
\mathsf{Z}^{\circ}_{\beta\neq 0}(\An\times\mathbf{P}^1)_{\overrightarrow{\mu},(1,\omega_{k}),\overrightarrow{\nu}}=0.$$
In particular, after multiplication by a monomial in $u$, these three-point functions 
are rational functions of $e^{iu},s_1,\dots,s_n$.
\end{prop}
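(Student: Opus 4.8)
The plan is to reduce both three-point functions to the rubber two-point series already computed in Proposition \ref{rubberpropan}, and to identify the two divisor operators $(2)$ and $(1,\omega_{k})$ with the derivations $\frac{d}{du}$ and $s_{k}\frac{d}{ds_{k}}$ through rubber calculus. I would first dispose of the vanishing clause exactly as in Lemma \ref{lemma1} and the vanishing part of Proposition \ref{rubberprop}: for $\beta\neq 0$ the invariant lies in $\mathbb{Q}[t_{1},t_{2}]$ and, by the holomorphic-symplectic argument of Lemma \ref{reducedlemma}, is divisible by $(t_{1}+t_{2})$, so it can be nonzero only if its degree in $t_{1},t_{2}$ is at least $1$. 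A dimension count shows this maximal degree is attained precisely when every part of $\mu$ and $\nu$ carries a divisor label; replacing any such label by the identity drops the degree to $0$ and forces vanishing. This reduces us to divisor labels, where the answer is proportional to $(t_{1}+t_{2})$.

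Next, writing $\mathsf{Z}^{\sim}_{\overrightarrow{\mu},\overrightarrow{\nu}}=\sum_{\beta}\langle\overrightarrow{\mu}\,|\,\overrightarrow{\nu}\rangle^{\An,\sim}_{\beta}\,s^{\beta}$ for the rubber partition function, a direct comparison of the definition of $\Theta^{\circ}$ with Proposition \ref{rubberpropan} gives the monomial identity $\Theta^{\circ}(\overrightarrow{\mu},\overrightarrow{\nu})=u^{\,l(\mu)+l(\nu)-2}\,\mathsf{Z}^{\sim}_{\overrightarrow{\mu},\overrightarrow{\nu}}$. By the product rule, the two asserted formulas are then equivalent to the per-invariant rigidification statements
\[
u^{2}\,\mathsf{Z}^{\circ}_{\beta\neq0}(\An\times\mathbf{P}^{1})_{\overrightarrow{\mu},(1,\omega_{k}),\overrightarrow{\nu}}=s_{k}\frac{d}{ds_{k}}\,\mathsf{Z}^{\sim}_{\overrightarrow{\mu},\overrightarrow{\nu}},\qquad u^{2}\,\mathsf{Z}^{\circ}_{\beta\neq0}(\An\times\mathbf{P}^{1})_{\overrightarrow{\mu},(2),\overrightarrow{\nu}}=\Big(u\frac{d}{du}+l(\mu)+l(\nu)-2\Big)\mathsf{Z}^{\sim}_{\overrightarrow{\mu},\overrightarrow{\nu}},
\]
where the factor $u^{2}$ is the genus-convention shift between $u^{2g-2}$ in the rigid series and $u^{2g}$ in the rubber series. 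Since rigidification preserves genus, it suffices to prove these genus by genus. For the $(1,\omega_{k})$ operator the profile over the middle divisor is unramified, so the marked relative point carrying $\omega_{k}$ is an interior point constrained to the fiber $\An\times z_{2}$ with insertion $\iota_{\ast}\omega_{k}$. Running the rigidification move of the degree-scaling lemma in reverse, the free rubber divisor insertion $\tau_{0}(\omega_{k})$ is rigidified by pinning it to $z_{2}$, and the divisor equation contributes exactly the eigenvalue $\beta\cdot\omega_{k}$; since $s_{k}\frac{d}{ds_{k}}s^{\beta}=(\beta\cdot\omega_{k})s^{\beta}$, summing over $\beta$ yields the first identity. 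This step is essentially routine given \cite{localcurves, TVGW}.

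The $(2)$ operator is the main obstacle. Here the middle profile is a single simple ramification over $z_{2}$, i.e. a branch point of the $\mathbf{P}^{1}$-projection pinned to a fixed fiber. In the rubber geometry the branch points are free up to the $\mathbb{C}^{\ast}$-scaling, and by Riemann--Hurwitz a genus-$g$ rubber cover of the relevant degree and profiles carries $r=2g-2+l(\mu)+l(\nu)$ of them. The branch-morphism rigidification should identify the rigid invariant with $(2)$ over $z_{2}$ with the sum over which of these $r$ interchangeable branch points is fixed, producing the factor $r$; genus by genus this is multiplication by $r=2g+l(\mu)+l(\nu)-2$, i.e. the operator $u\frac{d}{du}+l(\mu)+l(\nu)-2$ on the rubber series, as required. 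Making this precise — relating a relative $(2)$-condition to the branch divisor and tracking automorphism factors and the $\mathbb{C}^{\ast}$-torsor structure — is the delicate point, and I would model it on the topological-recursion and cut-and-join arguments already used in Section \ref{stationarysection} together with the rubber calculus of \cite{TVGW}.

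Finally, for the rationality statement, substituting Proposition \ref{rubberpropan} and applying the two derivations leaves an expression built from products and ratios of $\mathcal{S}(d\mu_{i}u)$, $\mathcal{S}(d\nu_{j}u)$, and $\mathcal{S}(du)$, together with geometric-type sums $\sum_{d}(\cdots)(s_{i}\cdots s_{j-1})^{d}$. After multiplication by the indicated monomial in $u$ the powers of $u$ in the denominators of the $\mathcal{S}$-factors cancel, leaving a ratio of finite trigonometric polynomials in $u/2$ and hence a rational function of $e^{iu}$; the sums over $d$ and over roots $\alpha_{ij}$ are rational in $s_{1},\dots,s_{n}$. This yields the concluding rationality claim.
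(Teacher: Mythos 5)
Your bookkeeping is correct and the skeleton (reduce to the rubber series of Proposition \ref{rubberpropan}, with eigenvalue $\beta\cdot\omega_{k}$ for $(1,\omega_{k})$ and the factor $2g-2+l(\mu)+l(\nu)$ for $(2)$) is the paper's; but there are two genuine gaps. The first affects both cases: you silently identify the relative three-point function, whose middle condition is the weighted partition $\{(2,1),(1,1)^{m-2}\}$ or $\{(1,\omega_{k}),(1,1)^{m-1}\}$ over $\An\times z_{2}$, with a two-point invariant carrying a single pinned interior insertion ($\tau_{1}[F]$, resp.\ $\tau_{0}(\iota_{\ast}\omega_{k})$). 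These are different moduli problems: the middle relative divisor imposes transversality and carries $m$ ordered relative points with cohomology constraints, not one interior marked point. The paper bridges exactly this gap with the degeneration formula: degenerate the base $\mathbf{P}^{1}$ into $C_{1}\cup C_{2}$ with $\overrightarrow{\mu},\overrightarrow{\nu}$ over $C_{1}$ and the pinned insertion over $C_{2}$; Lemma \ref{lemma1} kills every configuration with $\beta_{2}\neq 0$ (nonzero surface degree on the $C_{2}$ side), and the remaining $\beta_{2}=0$ factor is a local $\mathbb{C}^{2}\times\mathbf{P}^{1}$ invariant evaluated via \cite{localcurves}, which collapses the sum over gluing partitions $\overrightarrow{\rho}$ to precisely $\overrightarrow{\rho}=(2)$ (resp.\ forces the $(1,\omega_{k})$ identification). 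Without this step neither of your per-genus identities ever connects to the quantity $\mathsf{Z}^{\circ}_{\beta\neq 0}$ appearing in the statement.

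The second gap is the one you flag as delicate, and it is worse than delicate: stable maps to the threefold $\An\times\mathbf{P}^{1}$ are not covers of $\mathbf{P}^{1}$ (components may be contracted by the projection, or map entirely into fibers), so Riemann--Hurwitz does not furnish $r=2g-2+l(\mu)+l(\nu)$ interchangeable branch points of the projection to sum over, and a branch-divisor rigidification of the relative $(2)$-condition has no obvious meaning on such loci. The paper obtains the factor $2g-2+l(\mu)+l(\nu)$ with no Hurwitz-style argument at all: add a $\tau_{1}(1)$ insertion to the rubber invariant, which by the \emph{dilaton equation} multiplies it by exactly $2g-2+l(\mu)+l(\nu)$; the same marked point, pinned to a fixed $\An$-fiber, rigidifies the $\mathbb{C}^{\ast}$-scaling, giving $\langle\overrightarrow{\mu}|\tau_{1}[F]|\overrightarrow{\nu}\rangle^{\circ}_{g,\beta}=\langle\overrightarrow{\mu}|\tau_{1}(1)|\overrightarrow{\nu}\rangle^{\sim}_{g,\beta}$; then the degeneration step above converts $\tau_{1}[F]$ into the relative condition $(2)$. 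Replacing your branch-point sketch by the dilaton equation makes the "delicate point" a one-line identity, and your $(1,\omega_{k})$ argument (divisor equation, eigenvalue $\beta\cdot\omega_{k}$) then goes through verbatim once the degeneration step is supplied.
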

\begin{proof}
The vanishing statement for non-divisor insertions follows from dimension constraints.
For the rest, we proceed by applying a rigidification argument to our rubber evaluation.  
For the relative insertion $(2)$, the dilaton equation
allows us to add a $\tau_{1}(1)$ insertion to our rubber invariant.  We can replace the nonrigid invariant with a rigid relative invariant by using this marked point to fix the $\mathbb{C}^{\ast}$ action; that is,
if we impose the condition that this point lies on a fixed $\An$-fiber, we have
\begin{align*}
\langle \overrightarrow{\mu}|\tau_{1}[F]|\overrightarrow{\nu}\rangle^{\circ}_{g,\beta} &= \langle \overrightarrow{\mu}| \tau_{1}(1)|\overrightarrow{\nu}\rangle^{\sim}_{g,\beta} \\&= (2g-2+l(\mu)+l(\nu))\cdot\langle \overrightarrow{\mu}|\overrightarrow{\nu}\rangle^{\sim}_{g,\beta}.
\end{align*}
This last equality is precisely the dilaton equation.  Because of the monomial shift between
$\Theta^{\circ}(\overrightarrow{\mu},\overrightarrow{\nu})$ and our rubber evaluation, the generating function of these rigidified invariants is precisely $u\frac{d}{du}\Theta^{\circ}$.

By degenerating the base $\mathbf{P}^1$, we can arrange to have two components so that our two relative points lie on one component $C_1$ and our fiber insertion lies on the other component, $C_2$.
The degeneration formula gives
\begin{align*}
\langle \overrightarrow{\mu}|\tau_{1}[F]|\overrightarrow{\nu}\rangle^{\circ}_{g,\beta}
&= \sum_{\substack{\overrightarrow{\rho}\\ \beta_{1}+\beta_{2}=\beta\\ \Gamma_{1},\Gamma_{2}}}
\langle \overrightarrow{\mu},\overrightarrow{\nu}, \overrightarrow{\rho}\rangle_{\Gamma_{1},\beta_{1}}
\frac{1}{\mathfrak{z}(\rho)}\langle \overrightarrow{\rho}^{\vee}|\tau_{1}[F]|\rangle_{\Gamma_{2},\beta_{2}}.
\end{align*}
In this equation, we are summing over all possible combinatorial configurations $\Gamma_{1}, \Gamma_{2}$ of connected domain components so that the glued curve
over $C_{1}\cup C_{2}$ is connected.  The notation $\overrightarrow{\rho}^{\vee}$
denotes the partition $\rho$ with Poincare-dual cohomology insertions and the factor
$$\mathfrak{z}(\overrightarrow{\rho}) = \prod_{j} \rho^{(j)}\cdot |\mathrm{Aut}(\overrightarrow{\rho})|$$
is the gluing term from the degeneration formula.

First, if $\beta_{2}\ne 0$, then consider a connected component of $\Gamma_{2}$ which is not contracted under the projection to $\An$.  The dimension constraint from Lemma \ref{lemma1}
again shows that this invariant vanishes, unless the curve is contracted by the projection
to $\mathbf{P}^{1}$ and does not intersect the relative divisor.  However, this violates the constraint that the total degeneration configuration is connected.  As
$\beta_{2}=0$, we can use \cite{localcurves} to evaluate the second factor in the right-hand side.
The only term that contributes is with $\overrightarrow{\rho}=(2)$ and $\Gamma_{1}$ given by a connected curve of genus $g$:
$$\langle \overrightarrow{\mu}|\tau_{1}[F]|\overrightarrow{\nu}\rangle^{\circ}_{g,\beta}
= \langle \overrightarrow{\mu},(2),\overrightarrow{\nu}\rangle^{\circ}_{g,\beta}.$$
Along with the rigidification statement, this completes the proof.

The same argument applies for $(1,\omega_{k})$ with one modification.  Instead of adding a marked point via the dilaton equation, we can use the divisor equation and again use the marked point
to rigidify the rubber scaling:
\begin{align*}
\langle \overrightarrow{\mu}|\tau_{0}(\iota_{\ast}\omega_{k})|\overrightarrow{\nu}\rangle^{\circ}_{g,\beta} &= \langle \overrightarrow{\mu}| \tau_{0}(\omega_{k})|\overrightarrow{\nu}\rangle^{\sim}_{g,\beta} \\&= (\omega_{k}\cdot\beta)\langle \overrightarrow{\mu}|\overrightarrow{\nu}\rangle^{\sim}_{g,\beta}.
\end{align*}
The rest of the argument goes through unchanged.
The fact that derivatives of $\Theta^{\circ}$ are rational functions is an elementary
check.
\end{proof}

We will write down an expression for the disconnected $\beta\ne0$ partition functions in the next section.

\subsection{Fock space}

We introduce the Fock space modelled on
$H^{\ast}_{T}(\An, \mathbb{Q})$.  As we will discuss later, this describes the 
$T$-equivariant cohomology of the Hilbert scheme of points of $\An$.  
By definition, the Fock space $\mathcal{F}_{\An}$ is freely generated over 
$\mathbb{Q}(t_1,t_2)$ by the action of commuting creation operators
$$\mathfrak{p}_{-k}(\gamma)$$
for $k>0$ and $\gamma \in H^{\ast}_{T}(\An,\mathbb{Q})$
on the vacuum vector $v_{\emptyset}$.
The annihiliation operators 
$$\mathfrak{p}_{k}(\gamma)$$
for $k>0$ kill the vacuum vector
$$\mathfrak{p}_{k}(\gamma)\cdot v_{\emptyset} = 0$$
and satisfy the commutation relations
$$ [\mathfrak{p}_{k}(\gamma_{1}), \mathfrak{p}_{l}(\gamma_{2})] = -k \delta_{k+l}(\gamma_1, \gamma_2)
 $$
where $(\gamma_1, \gamma_2)$ denotes the Poincare pairing on $H_{T}^{\ast}(\An, \mathbb{Q})$.
We define a nondegenerate pairing on $\mathcal{F}_{\An}$ by requiring
$$\langle v_{\emptyset}| v_{\emptyset} \rangle = 1$$
and specifying the adjoint
$$\mathfrak{p}_{k}(\gamma)^{\ast} = -\mathfrak{p}_{-k}(\gamma).$$
There is an orthogonal grading
$$\mathcal{F}_{\An} = \bigoplus_{m\geq 0} \mathcal{F}_{\An}^{(m)}$$
induced by defining the degree of $v_{\emptyset}$ to be zero and the degree
of each operator $\mathfrak{p}_{k}(\gamma)$ to be $-k$.

If we work with a fixed basis $\{\gamma_{0}, \dots, \gamma_{n}\}$, our Fock space has a natural basis indexed by cohomology-weighted partitions with labels in our basis.
Given 
$$\overrightarrow{\mu} = \{(\mu_1, \gamma_{i_{1}}), \dots, (\mu_l, \gamma_{i_{l}})\},$$
the associated basis element is given by
$$\frac{1}{\prod \mu_{i}\cdot|\mathrm{Aut} \overrightarrow{\mu}|}
\mathfrak{p}_{-\mu_{1}}(\gamma_{i_{1}})\cdot\dots \mathfrak{p}_{-\mu_{l}}(\gamma_{i_{l}})
\cdot v_{\emptyset}.$$
A basis of the graded piece $\mathcal{F}_{\An}^{(m)}$
is given by cohomology-weighted partitions of $m$.
Under the inner product described before, the dual basis is given by cohomology-weighted partititions labelled with the dual basis of $\{\gamma_{i}\}$.

The first application of this formalism is to handle the combinatorics of the disconnected partition function.  Let
$$
\Theta^{\bullet}(\overrightarrow{\mu},\overrightarrow{\nu}) = 
\sum_{\substack{\overrightarrow{\mu} = \overrightarrow{\mu_{1}}\cup \overrightarrow{\rho}
\\ \overrightarrow{\nu} = \overrightarrow{\nu_{1}}\cup \overrightarrow{\rho}}}
(-1)^{|\rho|-l(\rho)}
\langle \overrightarrow{\rho}| \overrightarrow{\rho}\rangle
\Theta^{\circ}(\overrightarrow{\mu_{1}}, \overrightarrow{\nu_{1}}),
$$
where the brackets denote the Fock space inner product and the summation is over common subpartitions $\overrightarrow{\rho}$ of both $\overrightarrow{\mu}$ and $\overrightarrow{\nu}$.
\begin{prop}\label{relprop2}
\begin{align*}
u^{l(\mu)+l(\nu)-1}\mathsf{Z}^{\prime}_{\beta \neq 0}(\An\times\mathbf{P}^1)_{\overrightarrow{\mu},(2),\overrightarrow{\nu}}
&= \frac{d}{du}\Theta^{\bullet}(\overrightarrow{\mu},\overrightarrow{\nu})\\
u^{l(\mu)+l(\nu)}\mathsf{Z}^{\prime}_{\beta\neq 0}(\An\times\mathbf{P}^1)_{\overrightarrow{\mu},(1,\omega_{k}),\overrightarrow{\nu}}
&= s_{k}\frac{d}{ds_k}\Theta^{\bullet}(\overrightarrow{\mu},\overrightarrow{\nu}).
\end{align*}
\end{prop}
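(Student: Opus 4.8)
The plan is to obtain the disconnected evaluation from the connected one of Proposition \ref{relprop} by decomposing the domain of a disconnected relative stable map into its connected components. Since $\mathsf{Z}^{\prime}$ is multiplicative over connected components, $\mathsf{Z}^{\prime}_{\beta\neq0}$ is a sum, over all distributions of the relative profiles $\overrightarrow{\mu}$ over $0$ and $\overrightarrow{\nu}$ over $\infty$ together with the total surface class $\beta$ among connected pieces, of products of the corresponding connected invariants. The first step is to show that in every contributing configuration exactly one connected component carries a nonzero surface class together with the special insertion $(2)$ or $(1,\omega_{k})$, while all remaining components are contracted by the projection to $\An$.

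To see this, note that the special insertion is a single divisor operator and so is supported on one component. Any other component with nonzero surface class would then carry only trivial parts of the middle profile, i.e. a middle condition $(1)$, and such an invariant vanishes by Lemma \ref{lemma1}. Hence every component other than the distinguished main one has surface class zero. By the reduction of the $\beta=0$ theory to $\mathbb{C}^{2}\times\mathbf{P}^{1}$, together with the genus bound recorded in the degree-scaling lemma (a degree-zero component supporting a nontrivial integral must be rational, else $c_{1}(\An)=(t_1+t_2)$ appears in the obstruction bundle), the only contracted components that contribute are rational tubes totally ramified over the two special fibers. Each such tube joins a part of $\overrightarrow{\mu}$ over $0$ to an equal part of $\overrightarrow{\nu}$ over $\infty$, passing through the middle divisor with trivial profile. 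These tubes are therefore indexed by a common sub-partition $\overrightarrow{\rho}$ of $\overrightarrow{\mu}$ and $\overrightarrow{\nu}$, and the main component carries the complementary parts $\overrightarrow{\mu_{1}}, \overrightarrow{\nu_{1}}$ together with the operator.

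The second step is to evaluate the tube factor. A totally ramified rational tube of degree $\rho^{(i)}$ over $\mathbb{C}^{2}\times\mathbf{P}^{1}$, together with the equivariant Poincaré-dual splittings $(1,2t_1t_2)$ and $(\omega,-2\omega)$ already recorded in the degree-scaling lemma, contributes its multiple-cover weight and its self-pairing. Assembled over all parts, and after absorbing the genus-zero weight $u^{2g-2}=u^{-2}$ of each tube into the normalization carried by $\Theta^{\bullet}$, the total tube contribution is exactly $(-1)^{|\rho|-l(\rho)}\langle\overrightarrow{\rho}|\overrightarrow{\rho}\rangle$, where the sign $\prod_{i}(-1)^{\rho^{(i)}-1}$ is the familiar multiple-cover sign and $\langle\overrightarrow{\rho}|\overrightarrow{\rho}\rangle$ is the Fock space self-pairing. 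Combining this with the main-component evaluation supplied by Proposition \ref{relprop} and summing over $\overrightarrow{\rho}$ reproduces precisely the definition of $\Theta^{\bullet}$ in terms of $\Theta^{\circ}$. Because this factor is independent of $u$ and of the $s_{i}$, the operators $\frac{d}{du}$ and $s_{k}\frac{d}{ds_{k}}$ commute with the sum, so the connected identities upgrade termwise to the asserted disconnected identities; the cases $(2)$ and $(1,\omega_{k})$ are identical, differing only in which operator sits on the main component.

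The main obstacle will be the bookkeeping in the second step: confirming that the degree-zero relative multiple-cover integrals over $\mathbb{C}^{2}\times\mathbf{P}^{1}$, taken with the equivariant splitting factors and the automorphism prefactors $1/|\mathrm{Aut}(\overrightarrow{\rho})|$, collapse exactly to $(-1)^{|\rho|-l(\rho)}\langle\overrightarrow{\rho}|\overrightarrow{\rho}\rangle$ with the correct power of $u$, so that the monomial prefactors $u^{l(\mu)+l(\nu)-1}$ and $u^{l(\mu)+l(\nu)}$ on the disconnected side match those built into $\Theta^{\bullet}$. As in the connected computations, the cancellation of Hodge classes by Mumford's relation is what guarantees both that no higher-genus tube corrections survive and that the tube factor is genuinely $u$-independent.
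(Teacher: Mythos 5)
Your proposal is correct and follows essentially the same route as the paper's proof: isolate a single primary component carrying the divisor operator via Lemma \ref{lemma1}, show by dimension counting that all other components are rational tubes totally ramified over the relative divisors (forcing the leftover profiles to coincide in a common subpartition $\overrightarrow{\rho}$), and identify the tube contribution with the signed Fock space pairing before applying Proposition \ref{relprop} to the primary component. Your treatment of the $u^{-2}$ per tube and the multiple-cover sign makes explicit the bookkeeping the paper leaves as ``easy to check.''
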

\begin{proof}
We just prove the first case.  It follows from lemma \ref{lemma1} that there can only be one connected component that is not contracted by the projection to $\An$ and it must contain the relative marked point associated to the part $2$.  Let $\overrightarrow{\mu_{1}}, \overrightarrow{\nu_{1}}$ be the 
relative conditions associated to this primary component.  By dimension counting, any other connected component must be a rational curve with maximal ramification degree over $\mathbf{P}^{1}$, totally ramified over the relative divisors corresponding to $\mu$ and $\nu$.  This already implies
that the remaining relative conditions coincide:
$$\overrightarrow{\mu}\backslash\overrightarrow{\mu_{1}}= \overrightarrow{\nu}\backslash\overrightarrow{\nu_{1}}= \overrightarrow{\rho}.$$
It is easy to check that the contribution of these rational curves matches the Fock space inner product up to a sign.
\end{proof}

In \cite{hilban}, this complicated expression is expressed in terms of operators arising from an action of the affine algebra $\widehat{\mathfrak{gl}}(n+1)$ on Fock space.

\subsection{Ring structure}\label{ringstructure}

Let 
$$R = \mathbb{Q}(t_1,t_2)((u))[[ s_1,\dots, s_n]]$$
denote the ring of Laurent series in $u, s_1, \dots, s_n$ with coefficients in $\mathbb{Q}(t_1,t_2)$.
We will use the relative invariants of $\An \times \mathbf{P}^1$ to define the structure of
an $R$-algebra
on
$$\mathcal{R}^{(m)}_{\mathrm{GW}}(\An) =\mathcal{F}_{\An}^{(m)} \otimes_{\mathbb{Q}(t_1,t_2)} R.$$

Given three cohomology-weighted partitions $\overrightarrow{\mu}, \overrightarrow{\nu}, 
\overrightarrow{\rho}$ of $m$, we
define a product $\ast$ using the following structure constants
$$\langle \overrightarrow{\mu}, \overrightarrow{\nu} \ast 
\overrightarrow{\rho} \rangle = (-iu)^{-m+l(\mu)+l(\nu)+l(\rho)}\mathsf{Z}^{\prime}(\An\times\mathbf{P}^1)_{\overrightarrow{\mu}, \overrightarrow{\nu}, 
\overrightarrow{\rho}}$$
and extending by $R$-linearity.

\begin{prop}
Under the product defined above, $\mathcal{R}^{(m)}_{\mathrm{GW}}(\An)$ satisfies the axioms
of an $R$-algebra with $(1,\dots,1)$ as the identity element.
\end{prop}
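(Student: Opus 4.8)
The plan is to treat $\ast$ as the multiplication of a two-dimensional topological field theory assembled from the relative geometry of $\An\times\mathbf{P}^1$, following the template of Bryan--Pandharipande \cite{localcurves}. Bilinearity over $R$ is built into the definition, and $\overrightarrow{\nu}\ast\overrightarrow{\rho}$ is a well-defined element of $\mathcal{R}^{(m)}_{\mathrm{GW}}(\An)$ because the Fock pairing $\langle\,\cdot\,,\,\cdot\,\rangle$ is nondegenerate over $\mathbb{Q}(t_1,t_2)$, hence after base change to $R$: prescribing $\langle\overrightarrow{\mu},\overrightarrow{\nu}\ast\overrightarrow{\rho}\rangle$ for every basis vector $\overrightarrow{\mu}$ determines $\overrightarrow{\nu}\ast\overrightarrow{\rho}$ uniquely. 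It remains to check commutativity, the unit axiom, and associativity. For commutativity, note that $\mathsf{Z}^{\prime}(\An\times\mathbf{P}^1)_{\overrightarrow{\mu},\overrightarrow{\nu},\overrightarrow{\rho}}$ is independent of the positions of the relative divisors $\An\times z_i$ and that permuting the marked points on $\mathbf{P}^1$ identifies the relative moduli spaces; hence the three-point function is totally symmetric, and the exponent $-m+l(\mu)+l(\nu)+l(\rho)$ is symmetric as well. Together with the symmetry of the Fock pairing this forces $\overrightarrow{\nu}\ast\overrightarrow{\rho}=\overrightarrow{\rho}\ast\overrightarrow{\nu}$, and it also supplies the Frobenius relation $\langle \overrightarrow{a}\ast\overrightarrow{b},\overrightarrow{c}\rangle=\langle \overrightarrow{a},\overrightarrow{b}\ast\overrightarrow{c}\rangle$ which I will use below.

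For the unit axiom I must show $(1,\dots,1)\ast\overrightarrow{\nu}=\overrightarrow{\nu}$, that is, $(-iu)^{l(\mu)+l(\nu)}\mathsf{Z}^{\prime}(\An\times\mathbf{P}^1)_{\overrightarrow{\mu},(1),\overrightarrow{\nu}}=\langle\overrightarrow{\mu},\overrightarrow{\nu}\rangle$, where $(1)=(1,\dots,1)$ carries trivial labels, so $l((1))=m$ and the normalization exponent collapses to $l(\mu)+l(\nu)$. By Lemma \ref{lemma1} the $\beta\neq 0$ contribution vanishes on dimension grounds, so only the $\beta=0$ term survives. The $\beta=0$ part is governed by the theory of $\mathbb{C}^2\times\mathbf{P}^1$ via $T$-localization along the $\An$-direction, and there a relative divisor carrying the trivial profile $(1,\dots,1)$ is the identity cobordism: it collapses the pair of pants to the cylinder, whose normalized two-point function is exactly the Fock metric by the local $\mathbf{P}^1$ computations of \cite{localcurves}. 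This yields the unit axiom.

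Associativity is the substantive point, and I would prove it by a degeneration argument. Write $N_4(\overrightarrow{\mu},\overrightarrow{\nu},\overrightarrow{\rho},\overrightarrow{\sigma})$ for the appropriately normalized four-point series on $\mathbf{P}^1$ with four relative divisors; as with the three-point function it is totally symmetric in its four arguments. Degenerating the base so that $\{z_1,z_2\}$ and $\{z_3,z_4\}$ lie on opposite components, the degeneration formula writes $N_4$ as a sum over relative conditions $\overrightarrow{\eta}$ at the node, weighted by $1/\mathfrak{z}(\overrightarrow{\eta})$ together with the sum over Poincar\'e-dual insertions, of a product of two three-point functions. The crucial step, and the main obstacle, is to verify that this gluing sum is precisely the insertion of the Fock-space identity, i.e. that the $\mathfrak{z}(\overrightarrow{\eta})$ factor and the Poincar\'e-dual splitting reproduce the pairing $\langle\,\cdot\,,\,\cdot\,\rangle$, and that the powers of $(-iu)$ are multiplicative under gluing once the genus additivity $g=g_1+g_2+l(\eta)-1$ is taken into account. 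Granting this matching, the $\{z_1,z_2\}\,|\,\{z_3,z_4\}$ degeneration gives $N_4(\overrightarrow{\mu},\overrightarrow{\nu},\overrightarrow{\rho},\overrightarrow{\sigma})=\langle\overrightarrow{\mu}\ast\overrightarrow{\nu},\overrightarrow{\rho}\ast\overrightarrow{\sigma}\rangle$.

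Finally I assemble associativity from this identity together with the Frobenius relation and the symmetry of $N_4$. For fixed $\overrightarrow{\sigma}$, the Frobenius relation gives $\langle(\overrightarrow{\mu}\ast\overrightarrow{\nu})\ast\overrightarrow{\rho},\overrightarrow{\sigma}\rangle=\langle\overrightarrow{\mu}\ast\overrightarrow{\nu},\overrightarrow{\rho}\ast\overrightarrow{\sigma}\rangle=N_4(\overrightarrow{\mu},\overrightarrow{\nu},\overrightarrow{\rho},\overrightarrow{\sigma})$, while the same relation applied twice gives $\langle\overrightarrow{\mu}\ast(\overrightarrow{\nu}\ast\overrightarrow{\rho}),\overrightarrow{\sigma}\rangle=\langle\overrightarrow{\mu}\ast\overrightarrow{\sigma},\overrightarrow{\nu}\ast\overrightarrow{\rho}\rangle=N_4(\overrightarrow{\mu},\overrightarrow{\sigma},\overrightarrow{\nu},\overrightarrow{\rho})$. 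Since $N_4$ is symmetric these two expressions coincide for every $\overrightarrow{\sigma}$, and nondegeneracy of the Fock pairing forces $(\overrightarrow{\mu}\ast\overrightarrow{\nu})\ast\overrightarrow{\rho}=\overrightarrow{\mu}\ast(\overrightarrow{\nu}\ast\overrightarrow{\rho})$. The normalization bookkeeping and the identification of the gluing pairing with the Fock metric, anticipated by the normalization $\tfrac{1}{\prod\mu_i\,|\mathrm{Aut}\,\overrightarrow{\mu}|}$ built into the Fock basis, are where the genuine work lies; the explicit evaluations of Propositions \ref{relprop} and \ref{relprop2} furnish the concrete checks in the computable cases, consistently with the $\widehat{\mathfrak{gl}}(n+1)$ description of \cite{hilban}.
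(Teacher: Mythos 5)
Your proposal is correct and follows essentially the same route as the paper: commutativity from the symmetry of the three-point function, the unit axiom from Lemma \ref{lemma1} together with the $\beta=0$ local theory, and associativity by comparing the two degenerations of the four-point function on a broken $\mathbf{P}^1$ (your Frobenius-plus-symmetry bookkeeping is just an explicit spelling-out of that comparison). The paper's proof is terser but identical in substance, likewise relying on the $u$-normalization to make the gluing match up.
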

\begin{proof}
Commutativity is obvious.  The evaluation of the identity element follows from lemma \ref{lemma1}.  For associativity, we
consider $\An\times \mathbf{P}^1$ relative to four points $z_1, \dots, z_4$.  If we degenerate 
$\mathbf{P}^1$ to a broken $\mathbf{P}^1$ with two points on each component, there are two choices
for the distribution of points.  The degeneration formula with respect to 
these two configurations yields the associativity constraint.  The shift of $u$ in the definition of our structure constants ensures that the genus parameters match up correctly.
\end{proof}

Except for the claim about the identity element, this construction of
a ring structure with a basis indexed by cohomology-weighted partitions is valid for any surface $S$.
For most surfaces, e.g. the Enriques surfaces, it is easy to see that the unit element of the deformed
algebra structure must be a nontrivial deformation of $(1)^{m}$.

\subsection{Comparison to Quantum Cohomology of the Hilbert Scheme}

The advantage of rewriting our relative theory in terms of a ring structure on Fock space is
that we can compare it to another such ring structure.  The Hilbert scheme of $m$ points on
$\An$ parametrizes subschemes of length $m$ on the surface $\An$.  The $T$-equivariant 
cohomology of $\mathrm{Hilb}(\An)$, taken over all numbers of points, has a geometric identification with the Fock space
$\mathcal{F}_{\An}.$  The Heisenberg operators are geometrically defined 
using correspondences between Hilbert schemes of different numbers of points, (\cite{G},\cite{N}).
Our distinguished basis corresponds precisely to the Nakajima basis indexed by cohomology-weighted partitions.  Given a cohomology-weighted partitition $(\mu_1,\delta_1),\dots,(\mu_l,\delta_l)$
of $m$, the associated cohomology class
on $\mathrm{Hilb}_{m}(\An)$ has degree
$$2(m - l(\mu)) + \sum \mathrm{deg}(\delta_k).$$
In particular, the partitions $(2,1,\dots,1)$ and $(1,\omega_{k})$
are divisors and give a basis of $H^{2}(\mathrm{Hilb}_{m}(\An),\mathbb{Q})$.
The inner product described matches the classical Poincare pairing
on $T$-equivariant cohomology.

The classical ring structure on $H^{\ast}_{T}(\mathrm{Hilb}_{m}(\An),\mathbb{Q})$ induces a ring structure on 
each graded part $\mathcal{F}_{\An}^{(m)}$ of our Fock space.  
We are interested in the quantum cohomology, which defines a ring structure
on 
$$QH^{\ast}_{T}(\mathrm{Hilb}_m\An)= \mathcal{F}_{\An}^{(m)}\otimes \mathbb{Q}(t_1,t_2)((q))[[s_1,\dots, s_n]].$$
with structure constants determined by rational curves on $\mathrm{Hilb}(\An)$.  The
variables $q$ and $s_1,\dots, s_n$ encode the degree of our curves with
respect to the divisors $-(2)$ and $(1,\omega_{k}), k=1,\dots,n$ respectively.
This ring structure has been computed explicitly in \cite{hilban}.
We denote by
$$\langle\overrightarrow{\mu},\overrightarrow{\nu},\overrightarrow{\rho}\rangle^{\mathrm{Hilb}}_{\An}$$
the structure constants of the quantum cohomology ring with respect to the Poincare pairing.
\begin{prop}
For the divisor class $(2)$ and $(1,\omega_{k})$, the structure constants
$$\langle \overrightarrow{\mu},(2),\overrightarrow{\nu}\rangle^{\mathrm{Hilb}}_{\An}, 
\langle \overrightarrow{\mu},(1,\omega_{k})),\overrightarrow{\nu}\rangle^{\mathrm{Hilb}}_{\An}$$
are explicitly given rational function in $q$ and $s_1,\dots, s_n$.
Under the variable substitution $q = -e^{iu}$,
we have
\begin{align*}
(-1)^{m}\langle\overrightarrow{\mu},(2),\overrightarrow{\nu}\rangle^{\mathrm{Hilb}}_{\An} = 
(-iu)^{-1+l(\mu)+l(\nu)}\mathsf{Z}^{\prime}(\An\times\mathbf{P}^1)_{\overrightarrow{\mu},(2),\overrightarrow{\nu}}
\end{align*}
and
\begin{align*}
(-1)^{m}\langle\overrightarrow{\mu},(1,\omega_{k}),\overrightarrow{\nu}\rangle^{\mathrm{Hilb}}_{\An} = 
(-iu)^{l(\mu)+l(\nu)}\mathsf{Z}^{\prime}(\An\times\mathbf{P}^1)_{\overrightarrow{\mu},(1,\omega_{k}),\overrightarrow{\nu}}.
\end{align*}
\end{prop}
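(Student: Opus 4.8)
The plan is to prove the identity by computing both sides as explicit rational functions and matching them after the substitution $q=-e^{iu}$. Observe first that, by the definition of the $\ast$-product in section \ref{ringstructure}, the powers of $(-iu)$ appearing in the two equalities are exactly the prefactor $(-iu)^{-m+l(\mu)+l(\nu)+l(\rho)}$ with $l(\rho)=m-1$ for $\overrightarrow{\rho}=(2)$ and $l(\rho)=m$ for $\overrightarrow{\rho}=(1,\omega_k)$. Thus each claimed equality is equivalent to the statement that, after $q=-e^{iu}$, the quantum multiplication by the divisor $(2)$ or $(1,\omega_k)$ on $\mathrm{Hilb}(\An)$ agrees with the corresponding $\ast$-multiplication built from $\mathsf{Z}^{\prime}$, up to the global sign $(-1)^m$. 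Both structure constants decompose according to the $\An$-curve class $\beta$, equivalently according to total $s$-degree: the $\beta=0$ part contributes the $s^0$ coefficient while the $\beta\neq0$ part contributes the terms of positive $s$-degree, and I would match these two pieces separately.

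For the $\beta\neq0$ part, the Gromov-Witten side is already evaluated: Proposition \ref{relprop2} gives
$$u^{l(\mu)+l(\nu)-1}\mathsf{Z}^{\prime}_{\beta \neq 0}(\An\times\mathbf{P}^1)_{\overrightarrow{\mu},(2),\overrightarrow{\nu}} = \frac{d}{du}\Theta^{\bullet}(\overrightarrow{\mu},\overrightarrow{\nu})$$
together with the analogous formula involving $s_k\frac{d}{ds_k}\Theta^{\bullet}$ for $(1,\omega_k)$. The task is to compare these derivatives of $\Theta^{\bullet}$ with the positive-$s$-degree part of the divisor multiplication operators on $\mathrm{Hilb}(\An)$ computed in \cite{hilban} via the $\widehat{\mathfrak{gl}}(n+1)$-action on Fock space. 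Since $\Theta^{\bullet}$ is assembled from the connected rubber evaluation $\Theta^{\circ}$ of Proposition \ref{relprop}, and each summand there is supported on a single root class $\alpha_{ij}$ carrying the trigonometric weight built from $\mathcal{S}$, the comparison reduces to checking that these root-by-root contributions match the curve counts on the Hilbert scheme under $q=-e^{iu}$. The sign $(-1)^m$ and the gluing data are then tracked through the explicit definition of $\Theta^{\bullet}$.

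For the $\beta=0$ part, I would use $T$-localization along the $\An$-factor. By the proposition identifying $\mathsf{Z}^{\circ}_{\beta=0}$ with the theory of $\mathbb{C}^2\times\mathbf{P}^1$, the $s^0$ contribution is a sum over the $n+1$ torus-fixed points of $\An$, with tangent weights as recorded in the notation section, each local term being a relative invariant of $\mathbb{C}^2\times\mathbf{P}^1$. The equality of these local terms with the $s^0$ part of the $\mathrm{Hilb}(\An)$ structure constants is the Gromov-Witten/Hilbert correspondence for $\mathbb{C}^2$ established in \cite{okpanhilb}, localized at each fixed point; the fixed-point contributions on the Hilbert side are governed by the same tangent weights, so the two sums agree term by term as rational functions of $q$.

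The main obstacle is the $\beta\neq0$ comparison: although both sides are explicit, one must match the closed trigonometric form of $\Theta^{\bullet}$ against the operator formalism of \cite{hilban}, keeping careful track of the Nakajima normalizations, the gluing factors $\mathfrak{z}(\overrightarrow{\rho})$ implicit in $\Theta^{\bullet}$, and the sign $(-1)^{|\rho|-l(\rho)}$ in its definition. A secondary but genuine subtlety is confirming that the $\beta=0$ and $\beta\neq0$ pieces together exhaust the Hilbert structure constant with no overlap: since $q$ records the $-(2)$-degree and the $s_i$ record the $(1,\omega_i)$-degrees, the $s^0$ part is a full rational function of $q$, so one must verify that the $\mathbb{C}^2$ correspondence supplies precisely this $q$-dependence and contributes nothing that properly belongs to positive $s$-degree.
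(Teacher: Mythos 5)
Your proposal takes essentially the same route as the paper: the paper's proof is a one-line deferral --- compute the Hilbert-scheme three-point function directly (this is carried out in the companion paper \cite{hilban}) and compare with Proposition \ref{relprop2} --- which is precisely the comparison you outline. Your splitting into the $\beta=0$ piece (matched via the $\mathbb{C}^2$ correspondence of \cite{okpanhilb}) and the $\beta\neq 0$ piece (matched against the derivatives of $\Theta^{\bullet}$) is a sensible elaboration of that same strategy rather than a different argument.
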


This proposition is proven by a direct computation of the Hilbert scheme three-point invariant, followed by comparison with proposition 
\ref{relprop2}.  This last statement is the Gromov-Witten/Hilbert correspondence for divisor operators.

\subsection{Generation conjecture}\label{generationconjecture}

The following conjecture is presented in \cite{hilban}.
\begin{conj}
For the surface $\An$,
the operators of quantum multiplication by $(2)$ and $(1,\omega_{k})$
have nondegenerate joint spectrum, i.e. their joint eigenspaces are one-dimensional.
\end{conj}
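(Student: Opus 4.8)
The plan is to reduce the statement to a spectral simplicity assertion at a single generic point of the parameter space and then to exploit the explicit form of the divisor operators. The operators of quantum multiplication by $(2)$ and by $(1,\omega_{1}),\dots,(1,\omega_{n})$ form a commuting family of $n+1$ endomorphisms of $\mathcal{F}_{\An}^{(m)}$ whose matrix entries are rational functions of $t_1,t_2,q,s_1,\dots,s_n$ (by the evaluation of Proposition \ref{relprop2} and the comparison with \cite{hilban}). The condition that their joint eigenspaces all be one-dimensional cuts out a Zariski-open locus $U$ in the parameter space, and the desired statement over the function field $\QQ(t_1,t_2)((q))[[s_1,\dots,s_n]]$ is equivalent to $U$ being nonempty. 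Hence it suffices to exhibit a single complex specialization of $(t_1,t_2,q,s)$ at which the $n+1$ commuting operators have pairwise distinct joint eigenvalues on a basis of $\mathcal{F}_{\An}^{(m)}$.

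The first reduction exploits the weight decomposition. Using the formulas of \cite{hilban}, which express these operators through the action of the affine algebra $\widehat{\mathfrak{gl}}(n+1)$ on $\mathcal{F}_{\An}$, the operators of multiplication by $(1,\omega_{k})$ act, up to their quantum $s$-corrections, through the Cartan directions of $\widehat{\mathfrak{gl}}(n+1)$; their joint eigenvalues therefore detect the weight of a state, i.e. the distribution of Nakajima labels among the $n+1$ torus-fixed points of $\An$. Decomposing $\mathcal{F}_{\An}^{(m)}$ into the simultaneous eigenspaces of the $(1,\omega_{k})$, the problem reduces to showing that the single operator of quantum multiplication by $(2)$ has simple spectrum on each such weight space.

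For the remaining step I would analyze the $(2)$-operator as a quantum Hamiltonian. In the classical limit $q,s\to 0$ the operator becomes nilpotent, since divisor cup product raises cohomological degree, so the classical limit is useless and simplicity must come from the genuinely quantum terms; this mirrors the situation for $\mathrm{Hilb}(\mathbb{C}^2)$, where the analogous operator is the trigonometric Calogero-Sutherland Hamiltonian with nondegenerate spectrum \cite{okpanhilb}. Concretely, the $(2)$-operator is a quadratic expression in the Heisenberg operators with $q$-dependent diagonal coefficients of the form $\frac{(-q)^k+1}{(-q)^k-1}$, together with cubic split-join terms governing the interaction. I would compute its eigenvalues by identifying the joint eigenvectors with the appropriate $\An$-analogue of the integral-form Jack eigenbasis supplied by the integrable structure of \cite{hilban}, and then verify that the associated energies are pairwise distinct for generic $q$ within a fixed weight sector.

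The main obstacle is exactly this last verification: proving that the energy spectrum of the $(2)$-operator is simple on each weight space. The cleanest route I expect is to pass to a controlled limiting regime — for instance letting $q$ approach a generic value while keeping $t_1/t_2$ transcendental — in which the leading term of each eigenvalue becomes an explicit, strictly order-separating function of the combinatorial data (the parts of the partition together with their $\widehat{\mathfrak{gl}}(n+1)$-colors), so that no two distinct basis elements of a given weight can share an eigenvalue; standard perturbation theory would then propagate simplicity to a Zariski-dense set of parameters. Establishing that such a strictly separating leading term exists — equivalently, that the $\An$ Calogero-type Hamiltonian has no hidden spectral coincidences beyond those already resolved by the weight grading — is the genuine content of the conjecture and the step I expect to require the full strength of the explicit operator formulas of \cite{hilban}.
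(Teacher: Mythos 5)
This statement is presented in the paper as a \emph{conjecture}, imported from \cite{hilban}; the paper explicitly says ``while we are unable to prove the conjecture, we do provide suggestive evidence for its validity.'' There is therefore no proof in the paper for you to match, and your proposal does not supply one either: it is a reduction strategy whose final step is left open, and that final step is essentially the conjecture itself.

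Concretely, your first two reductions are reasonable in outline. Passing to a Zariski-open nondegeneracy locus and exhibiting a single good specialization is a legitimate way to prove a generic-simplicity statement for a commuting family with rational-function entries, and decomposing $\mathcal{F}_{\An}^{(m)}$ into joint eigenspaces of the $(1,\omega_{k})$ operators (which, classically, are diagonalized in the $T$-fixed-point basis and whose joint eigenvalues record the distribution of Nakajima labels over the $n+1$ fixed points) correctly isolates the problem on each weight sector. But the weight sectors have dimension greater than one in general, so everything rides on showing that quantum multiplication by $(2)$ has simple spectrum on each sector for some (hence generic) value of the parameters. You do not establish this: you propose identifying the eigenvectors with a conjectural $\An$-analogue of the Jack basis and then ``verifying'' that the energies are pairwise distinct, and you yourself flag this verification as ``the genuine content of the conjecture.'' No separating leading term is produced, no explicit specialization of $(t_1,t_2,q,s)$ is exhibited, and no degenerate or limiting regime is actually analyzed. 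The analogy with the Calogero--Sutherland Hamiltonian for $\mathrm{Hilb}(\mathbb{C}^2)$ is suggestive but does not transfer automatically, since the $\An$ operator has additional $s$-dependent terms mixing the $\widehat{\mathfrak{gl}}(n+1)$ colors, and spectral coincidences within a weight sector are exactly what must be ruled out. As written, the proposal replaces one unproven nondegeneracy assertion with another, so it is not a proof.
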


It is an immediate consequence of this conjecture that the divisors generate the quantum cohomology ring for $\mathrm{Hilb}_{m}(\An)$.  The same approach proves that divisors generate the quantum ring
for $\mathrm{Hilb}(\mathbb{C}^2)$.  Unfortunately, while we are unable to prove the conjecture,
we do provide suggestive evidence for its validity.  For the rest of the section, we explain some consequences of this nondegeneracy claim.  The following two corollaries are directly implied by the
above conjecture.

\begin{corstar}\label{gencor}
Assuming the generation conjecture for the surface $\An$,
the partitions $(2)$ and $(1,\omega_{k})$ generate the ring
$\mathcal{R}^{(m)}_{\mathrm{GW}}(\An)$ over the field
$\mathbb{Q}((u))((s_1,\dots,s_n))$.
\end{corstar}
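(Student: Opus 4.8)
The plan is to deduce generation purely from the \emph{nondegeneracy} of the joint spectrum, by passing to the regular representation of the commutative algebra $\mathcal{R}^{(m)}_{\mathrm{GW}}(\An)$. Write $A = \mathcal{R}^{(m)}_{\mathrm{GW}}(\An)$, let $K = \mathbb{Q}((u))((s_1,\dots,s_n))$ be the coefficient field, and let $N = \dim_K A$ be the number of cohomology-weighted partitions of $m$ in our basis. For $x\in A$ let $M_x\in\mathrm{End}_K(A)$ denote $\ast$-multiplication by $x$. Since $A$ is commutative with unit $(1,\dots,1)$ (the identity supplied by lemma \ref{lemma1}), the map $x\mapsto M_x$ is a faithful algebra homomorphism, because $M_x$ sends the unit to $x$; hence the subalgebra $\{M_x : x\in A\}\subset\mathrm{End}_K(A)$ is isomorphic to $A$, and it suffices to show that $M_{(2)}$ and the $M_{(1,\omega_k)}$ ($k=1,\dots,n$) generate this image. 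By the divisor correspondence of the previous proposition, under $q=-e^{iu}$ these operators agree, up to the explicit monomial normalization, with quantum multiplication by the divisor classes $(2)$ and $(1,\omega_k)$ on $\mathrm{Hilb}_m(\An)$, so the generation conjecture applies to them verbatim.

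First I would pass to a splitting field $\bar K\supseteq K$ and diagonalize. The hypothesis that the joint eigenspaces of $M_{(2)},M_{(1,\omega_1)},\dots,M_{(1,\omega_n)}$ are one-dimensional means exactly that, over $\bar K$, these commuting operators are simultaneously diagonalizable in a common eigenbasis $e_1,\dots,e_N$ whose joint eigenvalue tuples $\lambda^{(1)},\dots,\lambda^{(N)}\in\bar K^{\,n+1}$ are pairwise distinct. For each $j\ne i$ there is therefore a divisor operator $T$ with $\lambda^{(i)}_T\ne\lambda^{(j)}_T$, and forming the corresponding normalized product over the generators produces a polynomial $P_i$ in $M_{(2)},M_{(1,\omega_k)}$ that acts as the projector onto the line $\bar K\,e_i$. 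These $P_i$ form a complete system of orthogonal idempotents and span the full algebra of operators diagonal in $e_1,\dots,e_N$, an algebra of dimension $N$.

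It then remains to compare the nested algebras. Let $B$ be the $\bar K$-subalgebra generated by the divisor operators; it contains every $P_i$, so it \emph{is} the diagonal algebra and $\dim_{\bar K}B=N$. Tautologically $B\subseteq\{M_x\}$, while $\dim_{\bar K}\{M_x\}=\dim_{\bar K}(A\otimes_K\bar K)=N$ by faithfulness, so the inclusion is an equality: the divisors generate $A\otimes_K\bar K$ (and incidentally exhibit it as split \'etale). Finally I would descend to $K$: whether a fixed finite set of elements generates the finite-dimensional algebra $A$ is equivalent to surjectivity of the multiplication map out of the span of monomials of bounded degree, and the rank of this $K$-linear map is unchanged under the extension $\bar K/K$; generation over $\bar K$ thus forces generation over $K$, which is the claim.

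The main obstacle is the interface between the analytic hypothesis and this algebraic machine rather than the machine itself. The conjecture furnishes simple joint spectrum only after passage to a splitting field, so the delicate points are (i) confirming that the divisor operators are genuinely semisimple---automatic once the one-dimensional joint eigenspaces are known to \emph{exhaust} all $N$ dimensions rather than merely to exist---and (ii) controlling the coefficient field, in particular checking that $A$ becomes finite-dimensional after inverting the $s_i$ so that the rank-based descent is legitimate, and tracking the equivariant parameters $t_1,t_2$ through the divisor correspondence and through the normalization of proposition \ref{relprop2}. The representation-theoretic heart---that simple joint spectrum forces the generated algebra to coincide with the entire diagonal algebra---is then formal.
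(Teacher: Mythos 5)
Your argument is correct and is precisely the standard interpolation argument (simple joint spectrum $\Rightarrow$ the projectors onto joint eigenlines are polynomials in the divisor operators $\Rightarrow$ they span the full $N$-dimensional multiplication algebra, followed by descent of rank under field extension) that the paper invokes implicitly when it declares the corollary ``directly implied'' by the conjecture without writing a proof. The delicate points you flag --- semisimplicity being built into the intended reading of ``nondegenerate joint spectrum,'' and the transfer through the divisor correspondence and the substitution $q=-e^{iu}$ --- are exactly the right ones, and your treatment of them is consistent with the paper's intent.
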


\begin{corstar}
Assuming the generation conjecture, for any three cohomology-weighted partitions, the structure constants
$$\langle \overrightarrow{\mu},\overrightarrow{\nu},\overrightarrow{\rho}\rangle^{\mathrm{Hilb}}_{\An}$$
are rational functions in $q$ and $s_1,\dots, s_n$.
Under the variable substitution $q = -e^{iu}$,
we have
\begin{align*}
\langle\overrightarrow{\mu},\overrightarrow{\nu},\overrightarrow{\rho}\rangle^{\mathrm{Hilb}}_{\An} = 
(-iu)^{-m+l(\mu)+l(\nu)+l(\rho)}\mathsf{Z}^{\prime}(\An\times\mathbf{P}^1)_{\overrightarrow{\mu},\overrightarrow{\nu},\overrightarrow{\rho}}.
\end{align*}
\end{corstar}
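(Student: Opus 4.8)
The plan is to deduce the full three-point correspondence from the already-established divisor case by a Frobenius-algebra reconstruction argument. Both sides are commutative Frobenius algebras: on the Gromov-Witten side we have $\mathcal{R}^{(m)}_{\mathrm{GW}}(\An)$ with the product $\ast$ and the Fock-space pairing, and on the Hilbert side we have the quantum cohomology ring $QH^{\ast}_{T}(\mathrm{Hilb}_{m}\An)$. After the substitution $q = -e^{iu}$, both are structures on the same underlying module $\mathcal{F}_{\An}^{(m)}$, carrying the same nondegenerate Poincar\'e pairing and the same identity element $(1,\dots,1)$: on the Gromov-Witten side this unit is the content of the ring-structure proposition (via Lemma \ref{lemma1}), and on the Hilbert side it is the classical fundamental class, which remains the unit for the quantum product.

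First I would record that the multiplication-by-divisor operators coincide on the two sides. For a divisor $D \in \{(2),(1,\omega_{1}),\dots,(1,\omega_{n})\}$, the matrix entries of multiplication by $D$ are exactly the structure constants $\langle \overrightarrow{\mu}, D, \overrightarrow{\nu}\rangle$, and the divisor proposition proven above identifies these on the two sides under $q=-e^{iu}$. Hence, writing $M_{D}^{\mathrm{GW}}$ and $M_{D}^{\mathrm{Hilb}}$ for the two operators of quantum multiplication by $D$ on $\mathcal{F}_{\An}^{(m)}$, we have $M_{D}^{\mathrm{GW}} = M_{D}^{\mathrm{Hilb}}$ for every divisor $D$.

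Next I would invoke generation. By Corollary \ref{gencor}, the generation conjecture implies that $(2)$ and the $(1,\omega_{k})$ generate $\mathcal{R}^{(m)}_{\mathrm{GW}}(\An)$, and the same nondegeneracy statement on $\mathrm{Hilb}_{m}(\An)$ shows they generate the quantum cohomology ring. On either side the assignment $a \mapsto M_{a}$ sending an element to its operator of multiplication is an algebra homomorphism with $M_{a}\cdot(1,\dots,1) = a$, so generation implies that the full multiplication algebra equals the subalgebra $B \subseteq \mathrm{End}(\mathcal{F}_{\An}^{(m)})$ generated by the divisor operators $\{M_{D}\}$; by the previous paragraph this $B$ is literally the same for both theories. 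Now both maps $a \mapsto M_{a}^{\mathrm{GW}}$ and $a \mapsto M_{a}^{\mathrm{Hilb}}$ are inverse to the single evaluation map $\mathrm{ev}\colon B \to \mathcal{F}_{\An}^{(m)}$, $T \mapsto T\cdot(1,\dots,1)$, which depends only on the common identity and the common $B$. Since $\mathrm{ev}$ is a linear isomorphism, its inverse is unique, forcing $M_{a}^{\mathrm{GW}} = M_{a}^{\mathrm{Hilb}}$ for all $a$; therefore $a \ast_{\mathrm{GW}} b = M_{a}^{\mathrm{GW}}(b) = M_{a}^{\mathrm{Hilb}}(b) = a \ast_{\mathrm{Hilb}} b$, and pairing against a third class yields the asserted equality of all three-point functions, with the $(-iu)$-power and the substitution $q=-e^{iu}$ matching because they are already built into the definitions. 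Rationality of the general structure constants then follows formally: the divisor operators have entries rational in $q,s_{1},\dots,s_{n}$, so $B$ and the isomorphism $\mathrm{ev}$ are rational, and its inverse expresses each $M_{a}$, hence each structure constant, as a rational function.

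The entire argument rests on the generation conjecture, which is the genuine input; granting it, the only real content is the reconstruction step, and the point requiring care is the verification that the two Frobenius structures share the same pairing and the same unit, so that the evaluation map $\mathrm{ev}$ is canonical and common to both. I expect this bookkeeping -- matching the pairings, the identity, and the precise $u$- and $q$-normalizations across $q=-e^{iu}$ -- to be the main place where attention is needed, while the underlying linear algebra is routine.
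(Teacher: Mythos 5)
Your argument is correct and is essentially the paper's intended proof: the paper likewise deduces the full correspondence from the divisor case by noting that, under generation, every multiplication operator $M_{\overrightarrow{\rho}}$ is a polynomial in $M_{(2)}$ and the $M_{(1,\omega_k)}$ with coefficients determined by applying those monomials to the identity, so the two ring structures (sharing the same module, pairing, unit, and divisor operators under $q=-e^{iu}$) must coincide, with rationality inherited from the rational divisor operators. Your Frobenius/evaluation-map packaging is just a cleaner formalization of the same reconstruction.
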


This last corollary is the full Gromov-Witten/Hilbert correspondence
for $\An$ surfaces.  Equivalently, under a transcendental change of variables, the Gromov-Witten theory
of $\An\times\mathbf{P}^1$ defines a ring deformation of $H^{\ast}_{T}(\mathrm{Hilb}(\An),\mathbb{Q})$
that is isomorphic to the quantum cohomology ring.

From an algorithmic point of view, 
$\mathrm{Corollary}^{*}$ \ref{gencor}
gives a concrete approach to calculating an arbitrary
three-point invariant of $\An\times\mathbf{P}^1$
in terms of the divisor calculations of proposition \ref{relprop2}.  

Given any Nakajima basis element $\overrightarrow{\rho}$,
let                                                                             
$$M_{\overrightarrow{\rho}}$$                                                                    
denote the matrix of multiplication by $\overrightarrow{\rho}$ in the Nakajima basis for $\mathcal{R}_{\mathrm{GW}}$.  After      
applying the inner product, its entries are precisely the three-point invariants we are trying to compute.  
For $\An$, the              
statement of corollary \ref{gencor} is that the vectors                               
$$M_{(2)}^{a} \cdot \prod M_{(1,\omega_{k})}^{b_k} \cdot (1,....1)$$                                    
span $\mathcal{R}_{\mathrm{GW}}$.  In particular, 
for any $\overrightarrow{\rho}$, we can explicitly             
calculate the linear dependence                                                 
\begin{align*}
\overrightarrow{\rho} = \sum c_{a,b_k} M_{(2)}^{a} \cdot \prod M_{(1,\omega_{k})}^{b_k} \cdot (1,....1),
\end{align*}
with coefficients $c_{a,b_k} \in \mathbb{Q}(t_1,t_2)((u,s_1,\dots,s_n))$.  This implies                                                                    
$$M_{\overrightarrow{\rho}} = \sum c_{a,b_k}  M_{(2)}^{a} \cdot \prod M_{(1,\omega_{k})}^{b_k}.$$     
Finally, we extend the calculation for $k=3$ to arbitrary $k$ with the following proposition.

\begin{propstar}
The $k$-point function 
$$\mathsf{Z}^{\prime}(\An\times\mathbf{P}^1)_{\overrightarrow{\mu_1},\dots,\overrightarrow{\mu_k}}$$
is determined from the case $k=3$.
\end{propstar}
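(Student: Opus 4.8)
The plan is to exhibit the full collection of relative invariants $\mathsf{Z}^{\prime}(\An\times\mathbf{P}^1)_{\overrightarrow{\mu_1},\dots,\overrightarrow{\mu_k}}$ as the genus-zero sector of a two-dimensional topological field theory valued in the Frobenius algebra $\mathcal{R}^{(m)}_{\mathrm{GW}}(\An)$ of section \ref{ringstructure}, so that every multi-point function is obtained by gluing copies of the pair of pants. Recall that the product $\ast$ has the three-point functions as its structure constants, that $\mathcal{F}^{(m)}_{\An}$ carries the nondegenerate classical Poincar\'e pairing $\langle\,\cdot\,,\,\cdot\,\rangle$, and that associativity and the existence of the unit $(1,\dots,1)$ were already established there. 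The target statement is the identity
$$(-iu)^{-(k-2)m + \sum_{i=1}^{k} l(\mu_i)}\,\mathsf{Z}^{\prime}(\An\times\mathbf{P}^1)_{\overrightarrow{\mu_1},\dots,\overrightarrow{\mu_k}} = \langle \overrightarrow{\mu_1},\, \overrightarrow{\mu_2}\ast \overrightarrow{\mu_3}\ast\cdots\ast\overrightarrow{\mu_k}\rangle,$$
whose right-hand side is manifestly a composite of three-point data (the exponent reduces, for $k=3$, to the shift used to define $\ast$).

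To obtain this I would apply the degeneration formula along the base $\mathbf{P}^1$. Degenerating $(\mathbf{P}^1; z_1,\dots,z_k)$ so that $z_1,z_2$ lie on one rational component and $z_3,\dots,z_k$ on the other, joined at a node, expresses the $k$-point function as
$$\sum_{\overrightarrow{\eta}} \mathsf{Z}^{\prime}(\An\times\mathbf{P}^1)_{\overrightarrow{\mu_1},\overrightarrow{\mu_2},\overrightarrow{\eta}}\,\frac{1}{\mathfrak{z}(\eta)}\,\mathsf{Z}^{\prime}(\An\times\mathbf{P}^1)_{\overrightarrow{\eta}^{\vee},\overrightarrow{\mu_3},\dots,\overrightarrow{\mu_k}},$$
the sum running over a Nakajima basis $\overrightarrow{\eta}$ of $\mathcal{F}^{(m)}_{\An}$ with $\overrightarrow{\eta}^{\vee}$ its dual and $\mathfrak{z}(\eta)$ the gluing factor. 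This is exactly the contraction computing $\langle\overrightarrow{\mu_1},\,\overrightarrow{\mu_2}\ast X\rangle$, where $X$ is the element of $\mathcal{R}^{(m)}_{\mathrm{GW}}(\An)$ represented by the $(k-1)$-point function relative to $\overrightarrow{\eta}^{\vee},\overrightarrow{\mu_3},\dots,\overrightarrow{\mu_k}$. Since the second factor is a $(k-1)$-point function, I would induct on $k$, with base case $k=3$ (the definition of $\ast$), to reduce every invariant to three-point functions contracted against the fixed classical pairing; associativity of $\ast$ guarantees the answer is independent of the pants decomposition chosen. Equivalently, one performs a single maximal degeneration of the base into a chain of $k-2$ three-pointed rational curves and reads off the iterated product directly.

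The one point requiring genuine care is the bookkeeping of the genus grading: under the degeneration formula the arithmetic genus adds with a correction coming from the $l(\eta)$ gluing points, and summing over the disconnected theory introduces the automorphism and $\mathfrak{z}$-factors. The $u$-shift built into the structure constants of section \ref{ringstructure} is engineered precisely so that $2g-2$ is additive across the gluing, absorbing the extra $l(\eta)$ contributions; this is the same compatibility already verified in the four-point degeneration proving associativity, so for general $k$ it becomes a formal induction rather than a new computation. I expect the matching of the $u$-exponent, the disconnected conventions (no collapsed components, following \cite{localcurves}), and the Poincar\'e-dual gluing to be the only technical step. Nondegeneracy of the classical pairing is what makes the sum over $\overrightarrow{\eta}$ well defined, and it holds by construction of $\mathcal{F}^{(m)}_{\An}$.
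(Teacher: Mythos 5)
Your argument is essentially the paper's: degenerate the base $\mathbf{P}^1$ so that the relative points are distributed onto a chain of rational components each carrying at most three relative points, and apply the degeneration formula with the Poincar\'e-dual gluing sum, the $u$-shift in the structure constants handling the genus bookkeeping exactly as in the associativity check. The only piece you omit is the case $k\leq 2$, which the paper handles by inserting an extra relative condition with weighted partition $(1,\dots,1)$, harmless because it is multiplication by the identity.
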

\begin{proof}
If we have relative points $z_1, \dots, z_k$ for $k\geq3$, we consider a degeneration of $\mathbf{P}^1$ to a chain of rational curves of length $r$ with each $z_i$ on a distinct component. The degeneration formula reduces the computation to the individual components, each of which has only three relative points.  If $k=1$ or $2$, we can add relative insertions with weighted partition $(1,\dots,1)$ while leaving the invariant unchanged since this corresponds
to multiplication by the identity.
\end{proof}

\subsection{$\An$-bundles over higher genus curves}
We again assume the generation conjecture in this section.
Given a curve $C$ of genus $g$ equipped with line bundles $L_1, L_2$ of
degrees $a$ and $b$ respectively.  The total space
$$L_1 \oplus L_2$$
admits a fiberwise $T$-action.  In \cite{localcurves},
the Gromov-Witten theory of these noncompact threefolds is calculated 
using the formalism of a $1+1$-dimensional topological quantum field theory.

The above space also admits a fiberwise $\mathbb{Z}_{n+1}$-action which commutes with the $T$-action.  By taking the quotient and passing to the resolution, we obtain the noncompact threefold
$$\mathcal{X}_n(a,b) \longrightarrow(L_1\oplus L_2)/ \mathbb{Z}_{n+1}$$
which is an $\An$-fiber bundle over $C$ which again admits a fiberwise $T$-action.
For $k$ points $z_1,\dots,z_k \in C$ and $k$ cohomology-weighted partitions
$\overrightarrow{mu_1},\dots,\overrightarrow{\mu_k}$, we are interested in the Gromov-Witten theory
of $\mathcal{X}_n(a,b)$ relative to the fibers over $z_1,\dots,z_k$.  This can be encoded in a generating function
\begin{align}\label{relativepartition}
\mathsf{Z}^{\prime}(\mathcal{X}_n(a,b))_{\mu_1,\dots,\mu_k} \in \mathbb{Q}(t_1,t_2)((u))[[s_1,\dots,s_n]].
\end{align}

Given the generation statement, it is again possible to determine the $T$-equivariant Gromov-Witten theory using an enriched TQFT structure.
In \cite{localcurves}, the calculation of (\ref{relativepartition}) for arbitrary $C, a,b$ is reduced the following cases.

\begin{enumerate}
\item $\mathcal{X}_n(0,0)$ relative to $1$,$2$, or $3$ points
These invariants are precisely the relative invariants of $\An\times\mathbf{P}^1$ that we have just calculated, under the assumption of the generation conjecture.

\item $\mathcal{X}_n(0,-1)$ relative to $1$ point
This is less trivial.  For dimension reasons, the only nonzero invariant has cohomological degree $0$
and can thus be computed by any specialzation of the equivariant parameters.  In particular, we can work with the Calabi-Yau
specialization, for which the equivariant parameters sum to $0$ at fixed points away from the relative fiber. 
The computation can then be executed using the topological vertex formalism of \cite{AKMV}, proven in \cite{LLLZ},\cite{MOOP}.
\end{enumerate}

\section{Linear Hodge series}\label{linearhodge}
We apply the rubber evaluation of section \ref{rubberan} to
give a closed evaluation of reduced Gromov-Witten invariants
of $\An$ with a single Hodge class.  Since the extension to 
$\An$ is immediate, we will only write the evaluation in the case of 
$\mathcal{A}_{1}$.
For curve class $d[E]$, we give a formula for the generating function
$$F_{d}(u,z_1,\dots,z_r) =
\sum_{g,a_{1},\dots,a_{r}\geq 0} \langle (-1)^{g}\lambda_{g-\sum a_{i}} \tau_{a_{1}}(\omega)\cdot\dots\cdot\tau_{a_{r}}(\omega)
\rangle_{g,d}^{\mathcal{A}_{1},\mathrm{red}}u^{2g}(-z_{1})^{a_{1}}\cdot\dots\cdot(-z_{r})^{a_{r}}.$$
\begin{thm}\label{hodgethm}
\begin{align*}
F_{d}(u,z_1,\dots,z_r) = \frac{1}{d^{3}\mathcal{S}(du)^{2}}
\prod_{k=1}^{r} \frac{1}{iu}\left[ G(\frac{i\cdot d\cdot z_{k}u}{1-e^{-idu}}, z_{k})
 - G(\frac{-i\cdot d\cdot z_{k}u}{1-e^{idu}}, z_{k})\right]
\end{align*}
where
$$G(w,z) = \sum_{m=1}^{\infty} \frac{w^{m}}{(z)\cdot(z+1)\cdot\dots(z+m)}.$$
\end{thm}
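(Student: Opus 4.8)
The plan is to compute $F_d$ by the same $T$-equivariant localization that underlies the rubber evaluation of Proposition \ref{rubberprop}, but now retaining the full dependence on the Hodge polynomial instead of collapsing it to the top class $\lambda_g$ by Mumford's relation. Since the extension from $\An$ to $\mathcal{A}_1$ is immediate, I work on $\mathcal{A}_1$. By Lemma \ref{reducedlemma} the invariants defining $F_d$ are the coefficient of $(t_1+t_2)$ in the standard $T$-equivariant theory of the total space of $\mathcal{O}(-2)$ (Corollary \ref{reducedcor2}), so I evaluate this equivariant theory by virtual localization exactly as in Proposition \ref{degdep}. As established there, only single-edge graphs survive modulo $(t_1+t_2)^2$: a genus $g_1$ vertex over $p_1$, a genus $g_2$ vertex over $p_2$ with $g_1+g_2=g$, joined by one degree-$d$ edge carrying the factor $(t_1+t_2)$.

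The new feature is the insertion $\lambda_{g-\sum a_i}$. Under the splitting of the Hodge bundle on a single-edge fixed locus, $\Lambda(t)=\Lambda^{(1)}(t)\Lambda^{(2)}(t)$, so $\lambda_{g-\sum a_i}$ distributes as $\sum_{j_1+j_2=g-\sum a_i}\lambda^{(1)}_{j_1}\lambda^{(2)}_{j_2}$ across the two vertices. The obstruction factors still collapse, $\Lambda^\vee(2t_1)\Lambda^\vee(t_2-t_1)\equiv(-1)^{g_i}(2t_1)^{2g_i}\pmod{t_1+t_2}$, so each vertex contribution reduces to a one-point linear Hodge integral carrying a single class $\lambda^{(i)}_{j_i}$, a string of $\psi$-powers, and the node cotangent line $(\mathrm{weight}-\psi)^{-1}$. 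Crucially, because the point class restricts nontrivially to both fixed points (as $\omega=E_0+t_2=E_2+t_1$ in the notation of Proposition \ref{degdep}), each descendent marked point may be attached to $p_1$ or $p_2$ independently; this is the source both of the product $\prod_k$ and of the two-term difference $G(\cdots)-G(\cdots)$, the two terms recording attachment to the two fixed points whose tangent weights differ by the sign flip $t_2-t_1\mapsto t_1-t_2$ modulo $(t_1+t_2)$.

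The computation then reduces to resumming the one-point generating series. The two \emph{end} vertices carrying no descendents, summed over genus, give $\sum_{g}(-1)^g u^{2g}\int_{\Mbar_{g,1}}\lambda_g\psi^{2g-2}=\mathcal{S}(u)^{-1}$ once for each end, which after the $d$-rescaling and the edge bookkeeping produces the universal prefactor $d^{-3}\mathcal{S}(du)^{-2}$, exactly as in Proposition \ref{rubberprop}. Each descendent point contributes a single-$\lambda$ series weighted by $(-z_k)^{a_k}$ and by $u^{2g}$; using the explicit linear Hodge integral evaluations of \cite{faber-pandharipande}, the resummation of these series yields the hypergeometric function $G(w,z_k)$, with the Bernoulli-type argument $w=\pm\, i\,d\,z_k u/(1-e^{\mp idu})$ arising from resumming $\lambda_{g-\sum a_i}$ against the cotangent line at the chosen fixed point.

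The main obstacle is precisely this last analytic step: identifying the generating series of single-Hodge-class one-point integrals, with the descendent grading tracked by $z_k$ and the genus by $u$, as the closed series $G(w,z)$, and checking that the Bernoulli factor $idu/(1-e^{-idu})$ is produced with exactly the weight $w$ appearing in the theorem. I expect the numerology of the $d$-scaling, the sign conventions $(-1)^g$ and $(-z_k)^{a_k}$, and the matching of the two fixed-point contributions to the difference $G(w_+,z_k)-G(w_-,z_k)$ to demand the most care; once the one-point series is in closed form, assembling the product over marked points and the universal prefactor is routine.
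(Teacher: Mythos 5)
Your overall frame (reduce to $\mathcal{A}_1$, localize on the surface, keep the single Hodge class) is a reasonable starting point, but the proposal has a genuine gap at exactly the step you flag as ``the main obstacle,'' and that step is the entire content of the theorem. After distributing the $r$ marked points and the split Hodge class $\lambda_{j_1}\lambda_{j_2}$ (with $j_1+j_2=g-\sum a_i$) over the two vertices of a single-edge graph, the vertex contributions are \emph{multi-point} single-$\lambda$ Hodge integrals of the form $\int_{\Mbar_{h,S\cup\{*\}}}\lambda_j\prod_k\psi_k^{a_k}(w-\psi_*)^{-1}$. These do not factor over the marked points, so the product $\prod_k$ in the answer cannot be read off from ``each marked point independently chooses a fixed point''; in the paper the factorization over insertions is a separate statement, proved by degenerating $\mathcal{A}_1$ to a comb of rational curves, not a feature of the localization sum. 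Likewise your end-vertex evaluation $\sum_g(-1)^gu^{2g}\int_{\Mbar_{g,1}}\lambda_g\psi^{2g-2}=\mathcal{S}(u)^{-1}$ keeps only the top piece $j_2=g_2$ of the Hodge splitting, whereas all terms $\int\lambda_{j_2}\psi^{3g_2-2-j_2}$ contribute, with $d$-dependent weights coming from the node. Finally, even after reducing to $r=1$, the required two-point series $\sum\int_{\Mbar_{g,2}}\lambda_j\psi_1^{a}\psi_2^{b}\,u^{2g}(-z)^{a}$ is not contained in \cite{faber-pandharipande}, which gives only the one-point series, and you offer no mechanism to produce it; citing that reference does not close the argument.

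The paper's actual route is designed precisely to avoid this. After the comb degeneration and the evaluation $f_0(u)=\mathcal{S}(u)^{-2}$, it introduces auxiliary series $A_k(u)$, $B_l(u)$, $C_{k,l}(u)$ of linear Hodge integrals and determines them not by direct evaluation but by relative localization on the threefold $\mathcal{A}_1\times\mathbf{P}^1$ with respect to the $\mathbb{C}^{*}$ on the $\mathbf{P}^1$ factor, applied to integrals that vanish for trivial reasons (an insertion of $[F_\infty]^2$). This produces a triangular system relating $A_a(u)$ and $B_{m-1}(u)$ to the already-known rubber and genus-zero Hurwitz evaluations $\mathcal{S}(mu)\mathcal{S}(u)^{m-2}$, which is then inverted; the function $G(w,z)$ appears only at the very end, from resumming $B_m(u)=m!\sum_k e_{m-k}(1,\tfrac12,\dots,\tfrac1m)F_k(u)$ via complete symmetric functions, and the two-term difference in the theorem is an artifact of that resummation rather than a direct record of the two fixed points. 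To make your direct approach work you would need an independent closed form for the multi-point linear Hodge series at each vertex, which is not easier than the theorem itself.
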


In the above expression, $G(w,z)$ should be expanded in positive powers of $z$. Because of
the factors of $z_{k}$ in our substitution for $w$, the expression gives a well-defined power series in
$z_{1},\dots,z_{k}$.

\subsection{Degree scaling and factorization rule}

The degree dependence from theorem \ref{mainthm} applies here, so we immediately reduce
to the case of $F_{1}(u, z_1,\dots,z_r)$; from now on, we suppress the subscript.

As in the proof of \ref{statprop}, we can degenerate $\mathcal{A}_1$ to a comb of rational curves so that
the spine has normal bundle $\mathcal{O}(-2)$ and each tooth has normal bundle $\mathcal{O}$ and
a single insertion.  The factorization rule 
established there extends to include Hodge classes by restricting the Hodge bundle on $\Mbar_{g}$ 
to its boundary strata.  The resulting factorization rule is
$$F(u, z_1, \dots, z_r) = f_0(u)\prod_{i=1}^r g(u, z_i)$$
where $f_0(u)$ is the contribution of the comb at level $-2$
and $g(u,z_i)$ is the contribution of $\mathbf{P}^1$ relative to $\infty$ at level $0$.  By comparing
with the case of $r=1$, we can remove the dependence on $g(u,z)$:
$$F(u,z_1,\dots, z_r) = f_0(u)^{1-r} \prod_{i=1}^{r} F(u,z_i).$$

As a warm-up calculation, we compute $f_0(u)$.
\begin{prop}
$f_0(u) = \sum (-1)^g\langle \lambda_g\rangle^{\mathrm{red}}_{g} u^{2g} = \frac{1}{\mathcal{S}(u)^2}$
\end{prop}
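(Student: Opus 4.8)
The plan is to reduce $\langle \lambda_g\rangle^{\mathrm{red}}_{g,1}$ to the two-vertex localization computation already carried out in Propositions \ref{degdep} and \ref{rubberprop}, and to recognize each vertex as the Faber--Pandharipande series
$\sum_{g}(-1)^{g}u^{2g}\int_{\Mbar_{g,1}}\lambda_g\psi^{2g-2}=1/\mathcal{S}(u)$ recorded in the proof of Proposition \ref{rubberprop}. By the degree scaling of Theorem \ref{mainthm} we may work in degree $d=1$. By Lemma \ref{reducedlemma} the reduced invariant is the coefficient of $(t_1+t_2)$ in the standard $T$-equivariant theory with a single $\lambda_g$ insertion; equivalently, by Corollary \ref{reducedcor2}, it equals $\int_{\Mbar_g(\mathbf{P}^1,1)}\lambda_g\,c_g(R\pi_{\ast}\mathrm{ev}^{\ast}\mathcal{O}(-2))$. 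First I would apply virtual localization exactly as in the proof of Proposition \ref{degdep}. Working modulo $(t_1+t_2)^2$, the only surviving graphs carry a single degree $1$ edge joining a genus $g_1$ contracted vertex over $p_1$ to a genus $g_2$ contracted vertex over $p_2$, with $g_1+g_2=g$; the edge contributes the single factor $(t_1+t_2)$ that extracts the reduced class.

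Next I would split the insertion. Since the Hodge bundle of a nodal curve is the direct sum of the Hodge bundles of its components, $\lambda_g$ restricts on each fixed locus $\Mbar_{g_1,1}\times\Mbar_{g_2,1}$ to $\lambda_{g_1}$ and $\lambda_{g_2}$ on the two factors. The vertex integrand over $p_1$ is then
\begin{equation*}
\int_{\Mbar_{g_1,1}}\lambda_{g_1}\,\frac{\Lambda^{\vee}(2t_1)\Lambda^{\vee}(t_2-t_1)}{\psi-(t_2-t_1)}.
\end{equation*}
As in the proof of Proposition \ref{degdep}, Mumford's relation $c(\mathbb{E})c(\mathbb{E}^{\vee})=1$ gives $\Lambda^{\vee}(2t_1)\Lambda^{\vee}(t_2-t_1)\equiv(-1)^{g_1}(2t_1)^{2g_1}\pmod{t_1+t_2}$; this collapse is an identity among the Hodge factors and is unaffected by the extra inserted $\lambda_{g_1}$, which simply persists. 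Only the term $\psi^{2g_1-2}$ survives from the expansion of the cotangent-line denominator, so each vertex reduces to the Hodge integral $\int_{\Mbar_{g_i,1}}\lambda_{g_i}\psi^{2g_i-2}$ — precisely the contracted-vertex contribution appearing in the rubber computation.

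Finally I would assemble the pieces. The generating series of the vertex integrals is the Faber--Pandharipande evaluation $1/\mathcal{S}(u)$ quoted above, and $f_0(u)$ is exactly the product of the two such contracted-vertex contributions glued along the degree $1$ edge. Summing over $g_1+g_2=g$ and over $g$ factorizes the series as $1/\mathcal{S}(u)\cdot 1/\mathcal{S}(u)$, yielding $f_0(u)=1/\mathcal{S}(u)^2$. I expect the main obstacle to be the normalization and sign bookkeeping in this last assembly: one must track the precise edge factor, the $(2t_1)^{2g_i}$ monomials from the Mumford collapse, and the $(-1)^{g_i}$ signs so that the two vertex series combine to exactly $1/\mathcal{S}(u)^2$ and not to a hyperbolic variant such as $\big((u/2)/\sinh(u/2)\big)^{2}$; the verification that each vertex genuinely reproduces the Faber--Pandharipande series $1/\mathcal{S}(u)$ with the correct sign convention is the delicate point.
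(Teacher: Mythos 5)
Your argument is correct, and it takes a genuinely different route from the one in the paper. The paper derives the identity in one step from the threefold: it takes the connected rubber evaluation $\langle\emptyset|\emptyset\rangle^{\sim}=(t_1+t_2)/\mathcal{S}(u)^2$ of Proposition \ref{rubberprop} in degree $(d[E],0)$, rigidifies by inserting $\tau_0(\omega)$ via the divisor equation, and then localizes along the $\mathbf{P}^1$ direction to recognize the resulting integral as $(t_1+t_2)\sum(-1)^g\langle\lambda_g\rangle^{\mathrm{red}}_{g}u^{2g}$. You instead stay on the surface and redo the two-vertex toric localization of Proposition \ref{degdep} with a $\lambda_g$ inserted, using $\lambda_g=\lambda_{g_1}\lambda_{g_2}$ on the fixed locus $\Mbar_{g_1,1}\times\Mbar_{g_2,1}$ (valid since the Hodge bundle of a compact-type nodal curve splits as the direct sum over components), the compatibility of Mumford's relation with the extra $\lambda_{g_i}$ factor, and the Faber--Pandharipande evaluation of $\sum u^{2g}\int_{\Mbar_{g,1}}\lambda_g\psi^{2g-2}$ at each vertex. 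Your version is more self-contained --- it needs neither the rubber formalism nor the stationary theory of $\mathbf{P}^1$, and it makes visible why the answer is a perfect square --- whereas the paper's is a one-line corollary of machinery it has already built and matches the method used for the $A_k$ and $B_l$ series later in the same section. The two are close cousins in the end: the proof of Proposition \ref{rubberprop} contains essentially the same two-contracted-vertex localization, with the factor $1/\mathcal{S}(u)$ arising there from the curve contracted over $p_2$. Your closing caveat about sign bookkeeping is well placed but harmless: the paper's own convention (stated in the proof of Proposition \ref{rubberprop}) is that $\sum_g(-1)^gu^{2g}\int_{\Mbar_{g,1}}\lambda_g\psi^{2g-2}=1/\mathcal{S}(u)$, and with that convention the $(-1)^{g_i}$ from Mumford's relation, the $(2t_1)^{2g_i}$ monomials, and the degree-one edge factor $(t_1+t_2)/(-(t_2-t_1)^2)$ combine to give exactly $(-1)^g(t_1+t_2)$ times the product of the two vertex integrals, as required.
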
 
\begin{proof}
Consider the connected rubber evaluation from section \ref{rubberan} for the threefold $\mathcal{A}_1\times\mathbf{P}^1$ in degree $(\beta,0)$:
$$\langle\emptyset| \emptyset\rangle^{\sim} = (t_1+t_2)\cdot\frac{1}{\mathcal{S}(u)^{2}}$$
We can rewrite this integral by adding a  $\tau_{0}(\omega)$ insertion using the divisor equation and rigidifying  it by fixing this insertion to lie over a specified point in $\mathbf{P}^1$.  If we then apply localization along the $\mathbf{P}^1$ direction, the answer is precisely the desired Hodge integral.
\end{proof}

\subsection{Auxiliary evaluations}
We now introduce two auxiliary series of Hodge integrals with a single stationary insertion.
Recall $$\Lambda(-1) = (-1)^g \lambda_g + (-1)^{g-1} \lambda_{g-1} \dots +1.$$
The generating functions we evaluate are
$$A_{k}(u) = \sum \langle \Lambda(-1) \frac{\omega}{1-k\psi}\rangle^{\mathrm{red}}_{g}u^{2g}$$
and
$$B_{l}(u) = \sum \langle \Lambda(-1) \prod_{j=1}^{l}(j \psi +1)(\omega)\rangle^{\mathrm{red}}_{g,1}u^{2g}.$$

Each expression is a sum of invariants of the form $\langle\lambda_j \tau_{g-j}(\omega)\rangle$.
We only sum over terms that satisfy the dimension constraint; in particular the first term is a finite sum.  
Finally, we will also need the following series with two stationary insertions.
$$C_{k,l}(u) = \sum \langle \Lambda(-1) \frac{\omega}{1-k\psi_{1}}\prod_{j=1}^{l-1}(j \psi_{2} +1)(\omega)\rangle.$$
Here, $\psi_{1}, \psi_{2}$ denote the cotangent lines at the two marked points.
The factorization rule immediately yields the evaluation of $C$ in terms of $A$ and $B$:
$$C_{k,l}(u) = \frac{A_{k}(u)\cdot B_{l}(u)}{f_{0}(u)}.$$
The nice feature of these generating functions is that they admit simple evaluations via localization arguments.
\begin{prop}
$$A_k(u) = \sum_{j=1}^{k} j! \binom{k-1}{k-j}k^{-j}\mathcal{S}(ju)\mathcal{S}(u)^{j-2}$$
$$B_l(u) = \frac{\mathcal{S}((l+1)u)}{\mathcal{S}(u)^{l+1}}.$$ 
\end{prop}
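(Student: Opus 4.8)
The plan is to compute both series by $T$-equivariant localization on $\mathcal{A}_{1}$, in exactly the style of the proofs of Propositions \ref{degdep} and \ref{statprop}, and then to resum the resulting vertex integrals into the stated trigonometric closed forms. Since $A_k$ and $B_l$ are reduced invariants of $\mathcal{A}_{1}$ in degree $1$, I may either work directly with the $T$-action on $\mathcal{A}_{1}$ and extract the coefficient of $(t_1+t_2)$, or use Corollary \ref{reducedcor2} to pass to $\Mbar_{g,1}(\mathbf{P}^1,1)$ with the penultimate Chern class of $R\pi_{\ast}\mathrm{ev}^{\ast}\mathcal{O}(-2)$ inserted; the two are equivalent and I would lead with the first to stay consistent with the earlier arguments. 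As a first step I would replace the point insertion $\omega$ by the noncompact fixed divisor $E_0$, which forces the descendent marked point onto the fixed point $p_1$, just as in Proposition \ref{degdep}.

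Every localization graph carries a factor of $(t_1+t_2)$ for each edge, so modulo $(t_1+t_2)^2$ only single-edge graphs survive: a contracted genus $g_1$ curve over $p_1$ carrying the marked point and its descendent, one degree-$1$ edge, and a bare contracted genus $g_2$ curve over $p_2$, with $g_1+g_2=g$. The node is separating, so the inserted Hodge class splits multiplicatively, $\Lambda(-1)=\Lambda_{g_1}(-1)\,\Lambda_{g_2}(-1)$, contributing one factor to each vertex. The Hodge classes coming from the two $\mathcal{A}_{1}$-normal directions, $\Lambda^{\vee}(2t_1)\Lambda^{\vee}(t_2-t_1)$, collapse by Mumford's relation to the monomial $(-1)^{g_i}(2t_1)^{2g_i}$ once one works modulo $(t_1+t_2)$ in the vertex factors. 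What remains at each vertex is therefore an honest one-pointed Hodge integral against $\Lambda(-1)$, the descendent cotangent lines, and the node cotangent line appearing through $1/(w-\psi)$.

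These vertex series are precisely the one-pointed Hodge integrals $\int_{\Mbar_{h,n}}\Lambda(-1)\,\psi^{\bullet}/(w-\psi)$ evaluated in \cite{faber-pandharipande}, of which the series $\sum_g(-1)^g u^{2g}\int_{\Mbar_{g,1}}\lambda_g\psi^{2g-2}=1/\mathcal{S}(u)$ used in Proposition \ref{rubberprop} is the basic example. I would assemble the two vertex factors together with the edge factor and then resum against the descendent data: expanding $\frac{1}{1-k\psi}=\sum_a k^a\psi^a$ for $A_k$, and expanding $\prod_{j=1}^{l}(j\psi+1)$ for $B_l$. The expectation is that the $B_l$ polynomial telescopes against the Faber--Pandharipande evaluation to give the single dilation $\mathcal{S}((l+1)u)/\mathcal{S}(u)^{l+1}$, while for $A_k$ the interaction of the geometric series with the node cotangent line and the various Hodge degrees reorganizes into the finite sum $\sum_{j=1}^{k}j!\binom{k-1}{k-j}k^{-j}\mathcal{S}(ju)\mathcal{S}(u)^{j-2}$ of dilated $\mathcal{S}$-factors.

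The geometric and sheaf-theoretic input here is routine given the earlier propositions; the \emph{main obstacle} is the final resummation. Because all Chern classes $\lambda_i$ of the Hodge bundle contribute at the same order in $(t_1+t_2)$ but with different powers of $t_1$, the bookkeeping that ties the $t_1$-homogeneity of each term to the genus, and hence to the power of $u$, is delicate, and recognizing the reorganized series as products and dilations of $\mathcal{S}$ requires the explicit Faber--Pandharipande evaluations. I expect $B_l$ to be the cleaner case, with $A_k$ demanding the more careful combinatorial identification of the binomial coefficients and the powers of $k$.
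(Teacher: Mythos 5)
Your proposal takes a genuinely different route from the paper, and it has a gap at the decisive step. The paper does \emph{not} compute $A_k$ and $B_l$ by $T$-localization on the surface. Instead it works on the threefold $\mathcal{A}_1\times\mathbf{P}^1$ relative to $\mathcal{A}_1\times\{\infty\}$ with the $\mathbb{C}^{\ast}$-action on the $\mathbf{P}^1$ factor, and exploits the vanishing invariants built from $\tau_0(\omega[F_\infty]^2)=0$: relative localization splits each such vanishing invariant into a contribution over $\infty$ (a rubber integral, already evaluated in section \ref{rubberan} in terms of $\mathcal{S}$) and contributions over $0$ (which are exactly $A_a(u)$, resp. $C_{a,m-1}(u)$, weighted by genus-$0$ Hurwitz factors from the rational tails). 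Setting the sum to zero yields the linear system $\mathcal{S}(mu)\mathcal{S}(u)^{m-2}=\frac{1}{m!}\sum_a(-1)^{m-a}\binom{m}{a}a^{m}A_a(u)$, which is inverted to give the stated formula; $B_l$ then falls out of the second vanishing identity together with the factorization rule. The whole point of this detour is that it never requires a direct evaluation of the Hodge-descendent vertex integrals you propose to resum.

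The concrete gap in your plan is your assertion that, after Mumford's relation collapses $\Lambda^{\vee}(2t_1)\Lambda^{\vee}(t_2-t_1)$, ``what remains at each vertex is an honest one-pointed Hodge integral'' computable from \cite{faber-pandharipande}. That is true only for the bare vertex over $p_2$. The vertex over $p_1$ carries \emph{both} the descendent marked point and the node joining it to the edge, so it is a two-point linear Hodge integral
$$\int_{\Mbar_{g_1,2}}\lambda_{i}\,\psi_1^{a}\,\psi_2^{j},$$
with $i$ ranging over all intermediate Hodge degrees (the inserted $\Lambda(-1)$ does not reduce to $\lambda_{g_1}$ or to a Mumford monomial) and $j$ genuinely positive in general, since the dimension constraint $i+a+j=3g_1-1$ does not force $j=0$. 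The closed evaluations you cite — $\sum_g(-1)^gu^{2g}\int_{\Mbar_{g,1}}\lambda_g\psi^{2g-2}=1/\mathcal{S}(u)$ and the one-point series $\sum t^{2g}k^i\int_{\Mbar_{g,1}}\psi^{2g-2+i}\lambda_{g-i}=\mathcal{S}(t)^{-(k+1)}$ — do not cover these two-point integrals, and no product formula for them is available in that reference. So the ``final resummation'' you defer is not delicate bookkeeping on top of known inputs; it requires inputs you do not have. If you want to complete the computation along these lines you would have to control the full two-point linear Hodge series (e.g.\ via ELSV or cut-and-join), at which point you would in effect be re-deriving the Hurwitz-theoretic content that the paper instead imports through the rubber evaluation of Proposition \ref{rubberprop}.
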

\begin{proof}
Consider the threefold $A_1\times\mathbf{P}^1$ relative to $A_1\times {\infty}$, equipped with the $\mathbb{C}^{\ast}$-action 
from the $\mathbf{P}^1$. 
We will derive the two identities by applying relative localization with respect to this torus action.  Throughout this argument, 
we use the analysis of possible localization configurations that was required in the proof of Proposition \ref{rubberpropan}.
Let
$$[F_0], [F_{\infty}]$$
denote the equivariant classes of the fibers over the fixed points of $\mathbf{P}^1$
with tangent weights $1$ and $-1$.  We consider relative stable maps with target homology 
class $(\beta, m)$ for $m > 0$.
 
 Since $[F_{\infty}]^{2} = 0$, we have the vanishing statement 
 $$\langle \tau_{0}(\omega\cdot[F_{\infty}]^2)\prod_{i=1}^{m} \tau_{0}(\omega[F_0])| \mu(\omega)\rangle^{\mathcal{A}_{1}\times\mathbf{P}^1}_{(\beta,m)} =0$$
for $\mu = (m)$.
When we apply relative localization with respect to the torus action, the fixed loci have the following structure.   

As always, there is a unique irreducible component which maps nontrivially to 
$\mathcal{A}_1$.
If the primary component maps to $A_1 \times \infty$, then the fixed locus consists of a degenerate target which contributes a rubber integral.  Each of the $m$ distinct points mapping to $F_0$ must lie
on a distinct rational tail because they are fixed with $\omega$-insertions.  The only possible
contribution is 
$$(m!)\cdot \langle 1^m(\omega)|\frac{1}{1-\Psi}\cdot \tau_{0}(1)| (m,\omega)\rangle.$$
In the above formula, $\Psi$ represents the cotangent line to the moduli space of degenerations of the nonrigid target.  The insertion $\tau_0(1)$ arises from the marked point with the $F_{\infty}$ insertions.  The factorial contribution occurs because the partition has ordered parts.  After applying the string equation and our rubber evaluation, the contribution of this term to the localization sum is
$$m\mathcal{S}(mu)\mathcal{S}(u)^{m-2}.$$

If the primary component maps to $\mathcal{A}_1\times 0$, then the allowed fixed loci are described as follows.
There is a single rational fiber tail of degree $a < m$  attached to the primary component.  In the degenerate part of the target, there is a genus $0$ curve with ramification profile $(m)$ over $\infty$
and profile $(a,\rho)$ over $0$ for some partition $\rho$ of $m-a$.  Finally, there is a rational curve
for each part of $\rho$.  Since the relative insertion has an $\omega$-insertion, all
the other marked points must lie on the primary component.

The contribution of this term is 
$$ 
-A_{a}(u)\cdot \frac{a^{a+1}}{a!} \frac{1}{\mathrm{Aut}\rho} \prod \frac{\rho_{i}^{\rho_{i}-1}}{\rho_{i}!}
(-m)^{l(\rho)}
$$
using genus 0 Hurwitz evaluations for the rational tail contributions.  The $m$ marked points
on the primary component can be removed with the divisor equation.
The summation over $\rho$ is handled by the identity
$$\sum_{\rho} \frac{1}{\mathrm{Aut}\rho} \prod \frac{\rho_{i}^{\rho_{i}-1}}{\rho_{i}!}
(-m)^{l(\rho)} = \frac{-m(-a)^{m-a-1}}{m-a)!}.$$

We thus have the identity
$$\mathcal{S}(mu)\mathcal{S}(u)^{m-2} = 
\frac{1}{m!}\sum_{a} (-1)^{m-a}A_{a}(u)\binom{m}{a}a^{m}$$
which is easily inverted to yield the first statement.

For the second part of the proposition, we study the relative invariant in degree $(\beta,m)$
$$\langle \prod_{j=1}^{m-1}(j \psi_{1} +1)(\omega \cdot  [F_0]) \tau_0(\omega [F_{\infty}]^2) | (m,\omega)\rangle^{\mathcal{A}_{1}\times\mathbf{P}^1}_{(\beta,m)}= 0$$
which again vanishes for trivial reasons.  Our analysis proceeds as before.  If the primary
component maps to the degenerate part, then the first insertion forces a unique rational tail
of degree $m$.  Indeed, a tail of smaller degree would give a vanishing contribution in the localization
expression for the first insertion.  The contribution is now
$$m\mathcal{S}(mu)^{2}\mathcal{S}(u)^{-2}.$$
In the other fixed loci, the primary component maps to $A_1 \times 0$ with a 
configuration of rational tails identical to the last computation.  The only difference is that we have a more complicated insertion on the primary component .  The contribution is
$$\sum_{a \leq m,\rho} C_{a,m-1}(u)\cdot \frac{a^{a+1}}{a!} \frac{1}{\mathrm{Aut}\rho} \prod \frac{\rho_{i}^{\rho_{i}-1}}{\rho_{i}!}
(-m)^{l(\rho)}.$$
By applying the factorization rule and our evaluation for $A_{a}(u)$, this is precisely
$$\frac{m\cdot B_{m-1}(u)}{f_{0}(u)}\mathcal{S}(mu)\mathcal{S}(u)^{m-2}.$$
Since the two fixed loci sum to zero, this gives the identity for $B_{m-1}(u)$.
\end{proof}

\subsection{Proof of Theorem $\ref{hodgethm}$}
\begin{proof}
In order to evaluate $F(u,z)$, we expand $B_{m}(u)$ in monomial form and invert
the resulting system.  More precisely, 
if $$F(u,z) = \sum_{m} F_{m}(u) z^{m}$$
then
$$B_{m}(u) = m!\cdot\sum_{k=0}^{m}e_{m-k}(1, \frac{1}{2},\dots, \frac{1}{m})F_{k}(u).$$
Here, $e_{j}(a_{1},\dots,a_{r})$ is the $j$-th elementary symmetric function.
This inverts to give 
$$F_{m}(u) = \sum_{k=0}^{m}(-1)^{m-k} \frac{B_{k}(u)}{k!}h_{m-k}(1,\frac{1}{2},\dots,\frac{1}{k})$$
where $h_{j}(a_{1},\dots,a_{r})$ is the $j$-th complete symmetric function.
If we sum over $m$ and use the evaluation for $B_{k}(u)$, we have
\begin{align*}
F(u,z) = \frac{1}{z}\sum_{k\geq 1} z^{k}\frac{S(ku)}{(k-1)!S(u)^{k}}
\prod_{i=1}^{k}\frac{1}{1+z/i}. 
\end{align*}
This is equivalent to the expression in the theorem statement.
\end{proof}

\vspace{+10 pt}
Department of Mathematics\\
Columbia University\\
New York, NY 10027, USA\\
dmaulik@cpw.math.columbia.edu

\end{document}